\documentclass[11pt]{amsart}
\usepackage[margin=1.2in,includeheadfoot]{geometry}
\usepackage{amsmath,amssymb,amsthm,amsfonts,amscd,mathtools}
\usepackage{bm,mathrsfs,stmaryrd,dsfont,esint,latexsym}
\usepackage{tikz-cd}
\usepackage{slashed}
\usepackage{indentfirst}
\usepackage{enumerate,enumitem}
\usepackage{etoolbox,aliascnt}
\usepackage{microtype}
\usepackage{booktabs,array,longtable,tabularx,adjustbox,graphicx,float}
\usepackage{comment}
\usepackage{xcolor,xurl}
\definecolor{citeblue}{RGB}{0,0,255}
\usepackage[
 colorlinks=true,
 linkcolor=citeblue,
 urlcolor=citeblue,
 citecolor=citeblue,
 anchorcolor=black,
 linktoc=all
]{hyperref}
\usepackage[nameinlink,noabbrev,capitalise]{cleveref}
\usepackage{fancyhdr}

\numberwithin{equation}{section}
\allowdisplaybreaks[3]
\selectfont
\setlist[enumerate]{leftmargin=2.35em,itemsep=0.25em,topsep=0.35em}

\theoremstyle{plain}
\newtheorem{theorem}{Theorem}[section]

\newaliascnt{corollary}{theorem}

\aliascntresetthe{corollary}

\newaliascnt{lemma}{theorem}
\newtheorem{lemma}[lemma]{Lemma}
\aliascntresetthe{lemma}

\newaliascnt{proposition}{theorem}
\newtheorem{proposition}[proposition]{Proposition}
\aliascntresetthe{proposition}

\theoremstyle{definition}
\newaliascnt{definition}{theorem}

\aliascntresetthe{definition}

\theoremstyle{remark}
\newaliascnt{remark}{theorem}

\aliascntresetthe{remark}
\newtheorem*{remark*}{Remark}

\crefname{theorem}{Theorem}{Theorems}
\crefname{proposition}{Proposition}{Propositions}
\crefname{lemma}{Lemma}{Lemmas}
\crefname{corollary}{Corollary}{Corollaries}
\crefname{definition}{Definition}{Definitions}
\crefname{remark}{Remark}{Remarks}
\crefname{equation}{equation}{equations}
\crefname{section}{Section}{Sections}
\crefname{subsection}{Section}{Sections}
\crefname{subsubsection}{Section}{Sections}
\crefname{appendix}{Appendix}{Appendices}

\crefformat{theorem}{Theorem~#2#1#3}
\crefmultiformat{theorem}{Theorems~#2#1#3}{,~#2#1#3}{,~#2#1#3}{ and~#2#1#3}
\crefrangeformat{theorem}{Theorems~#2#1#3--#5#4#6}
\crefformat{proposition}{Proposition~#2#1#3}
\crefformat{lemma}{Lemma~#2#1#3}
\crefformat{corollary}{Corollary~#2#1#3}
\crefformat{definition}{Definition~#2#1#3}
\crefformat{remark}{Remark~#2#1#3}
\crefformat{equation}{equation~#2#1#3}          
\crefformat{section}{Section~#2#1#3}
\crefformat{subsection}{Section~#2#1#3}         
\crefformat{subsubsection}{Section~#2#1#3}       
\crefformat{appendix}{Appendix~#2#1#3}
\newcommand{\R}{\mathbb R}
\newcommand{\C}{\mathbb C}
\newcommand{\ii}{\mathrm i}
\newcommand{\dd}{\,\mathrm d}
\newcommand{\Id}{\mathrm{Id}}
\newcommand{\Span}{\operatorname{span}}
\newcommand{\Ker}{\ker}
\newcommand{\Ran}{\operatorname{ran}}
\newcommand{\dist}{\operatorname{dist}}

\newcommand{\diag}{\operatorname{diag}}

\newcommand{\cD}{\mathcal D}
\newcommand{\cH}{\mathcal H}
\newcommand{\cG}{\mathcal G}
\newcommand{\cK}{\mathcal K}
\newcommand{\cE}{\mathcal E}
\newcommand{\cF}{\mathcal F}
\newcommand{\cL}{\mathcal L}

\newcommand{\cB}{\mathcal B}
\newcommand{\cC}{\mathcal C}
\newcommand{\cP}{\mathcal P}
\newcommand{\cR}{\mathcal R}
\newcommand{\cT}{\mathcal T}
\newcommand{\cW}{\mathcal W}
\newcommand{\cO}{\mathcal O}
\newcommand{\cS}{\mathcal S}

\newcommand{\norm}[2][]{\left\lVert #2\right\rVert_{#1}}
\newcommand{\abs}[1]{\left\lvert #1\right\rvert}
\newcommand{\ipR}[2]{\left\langle #1,#2\right\rangle_{\R}}
\newcommand{\ip}[2]{\left\langle #1,#2\right\rangle_{2}}
\newcommand{\restr}[2]{\left.#1\right|_{#2}}

\begin{document}

\title[Dirac-Coulomb ground states]{Uniqueness, Nondegeneracy, and
Asymptotic Expansions of Ground States for the Dirac-Coulomb System}

\author[Chen]{Pan Chen}

\author[Guo]{Qi Guo}

\author[Zeng]{Xiaoyu Zeng}

\subjclass[2020]{35Q40; 35J50; 49J35}
\keywords{Dirac-Coulomb system; ground state; nonrelativistic limit;
nondegeneracy; uniqueness}
\date{}

\begin{abstract}
We study ground states of the following Dirac-Coulomb system 
$$\mathscr{D}_cu_c-
  \left(\abs{x}^{-1}\ast \abs{u_c}^2\right)u_c=\omega_cu_c,$$ under the constraint
  $\norm[L^2]{u_c}^2=1.$
For sufficiently large \(c\), we prove uniqueness of ground states
modulo translations, phase transformations and spinorial rotations, and determine the
kernel of the linearized operator at ground state. 
We also obtain asymptotic expansions of
ground state in the nonrelativistic limit regime.
\end{abstract}

\maketitle
\vskip0.4cm
\begingroup
\hypersetup{linkcolor=blue}
\tableofcontents
\endgroup

\section{Introduction}
{

The attractive Dirac--Coulomb system describes a spin-$1/2$ field
coupled to a Coulomb potential generated by its own density.
It arises in semiclassical models of Dirac fermions interacting
with optical phonons \cite{ComechZubkov2013}.
The fermion distorts the surrounding medium, and the induced
polarization acts back on it through an attractive potential.
This mechanism can produce a localized state consisting of the
fermion and its polarization cloud, known as a polaron.
In the semiclassical description, such bound states are represented
by localized standing waves of the coupled field equations.

Let $m>0$ denote the particle rest mass and $c>0$ the speed of light.
For convenience, let Planck constant $\hbar=1$.
The corresponding effective evolution system is
\begin{equation*}
  \left\{
  \begin{aligned}
    &\ii\partial_t u=\mathscr{D}_c u-\kappa\phi u,\\
    &-\Delta\phi=4\pi\abs{u}^2,
  \end{aligned}
  \right.
  \qquad (t,x)\in\R\times\R^3,
\end{equation*}
where $u(t,x)\in\C^4$ is the spinor field, $\phi(t,x)\in\R$
is the Coulomb field, and $\kappa>0$ is the coupling constant.
Here
\begin{equation*}
  \mathscr{D}_c:=-\ii c\,\alpha\cdot\nabla+mc^2\beta
\end{equation*}
is the free Dirac operator, with
\begin{equation*}
  \alpha_j=
  \begin{pmatrix}
    0&\sigma_j\\
    \sigma_j&0
  \end{pmatrix},
  \qquad
  \beta=
  \begin{pmatrix}
    I_2&0\\
    0&-I_2
  \end{pmatrix}.
\end{equation*}
The matrix $I_2$ is the $2\times2$ identity matrix, and
\begin{equation*}
  \sigma_1=
  \begin{pmatrix}
    0&1\\
    1&0
  \end{pmatrix},
  \qquad
  \sigma_2=
  \begin{pmatrix}
    0&-\ii\\
    \ii&0
  \end{pmatrix},
  \qquad
  \sigma_3=
  \begin{pmatrix}
    1&0\\
    0&-1
  \end{pmatrix}
\end{equation*}
are the Pauli matrices.

For simplicity, we set $\kappa=1$.
Substituting 
$\phi=\abs{x}^{-1}\ast\abs{u}^2$ and  
$u(t,x)=e^{-\ii\omega_c t}u_c(x)$, we obtain the normalized
stationary system
\begin{equation}
    \mathscr{D}_cu_c-
    (\abs{x}^{-1}\ast\abs{u_c}^2)u_c
    =\omega_cu_c,\qquad
    \norm[L^2]{u_c}^2=1.
  \label{eq:1.1}
\end{equation}
The frequency $\omega_c$ is the Lagrange multiplier associated
with the mass constraint.
Set
\begin{equation*}
  \begin{gathered}
    u_c=\binom{f_c}{g_c},
    \qquad
    f_c,g_c:\R^3\to\C^2,
    \qquad
    \cD:=-\ii\sigma\cdot\nabla,\\
    h_c:=cg_c,
    \qquad
    \lambda_c:=mc^2-\omega_c.
  \end{gathered}
\end{equation*}
Then \eqref{eq:1.1} is equivalent to
\begin{equation}
  \left\{
  \begin{aligned}
    &\cD h_c+
    \left\{
      \lambda_c-
      \abs{x}^{-1}\ast
      \left(\abs{f_c}^2+c^{-2}\abs{h_c}^2\right)
    \right\}f_c=0,\\
    &\cD f_c-2mh_c+
    c^{-2}\left\{
      \lambda_c-
      \abs{x}^{-1}\ast
      \left(\abs{f_c}^2+c^{-2}\abs{h_c}^2\right)
    \right\}h_c=0,\\
    &\norm[L^2]{f_c}^2+
    c^{-2}\norm[L^2]{h_c}^2=1.
  \end{aligned}
  \right.
  \label{eq:1.2}
\end{equation}

In the nonrelativistic Landau--Pekar theory, a polaron is described
by a Schr\"odinger particle coupled to the polarization of the
medium. Minimizing the energy with respect to the polarization
field yields an effective Coulomb self-attraction
\cite{LewinRougerie2013}.
Retaining the two spin components and neglecting spin-dependent
interactions, the corresponding normalized stationary Hartree
equation is
\begin{equation}
    -\frac1{2m}\Delta f+\lambda f-
    \left(\abs{x}^{-1}\ast\abs{f}^2\right)f=0,\qquad
    \norm[L^2]{f}^2=1.
  \label{eq:1.3}
\end{equation}
Here $f:\R^3\to\C^2$.
This is the two-component form of the Choquard--Pekar equation.

 \eqref{eq:1.1} and \eqref{eq:1.3} describe the same
effective Coulomb attraction with Dirac and Schr\"odinger
 operators, respectively.
Their mathematical connection is established by the
nonrelativistic convergence of ground states in \cite{ChenDingGuo2026}.
}

Both \eqref{eq:1.1} and \eqref{eq:1.3} have variational structure, but the corresponding
functionals have rather different geometry.  On
\begin{equation*}
 \cS:=\left\{u\in H^{1/2}(\R^3,\C^4):\norm[L^2]{u}=1\right\},
\end{equation*}
solutions of \eqref{eq:1.1} are constrained critical points of
\begin{equation}
 \cE_c(u)
 :=\int_{\R^3}u^\dagger\mathscr{D}_cu\,\dd x
 -\frac12\int _{\R^3\times\R^3}
 \frac{\abs{u(x)}^2\abs{u(y)}^2}{\abs{x-y}}\,\dd x\dd y.
 \label{eq:1.4}
\end{equation}
Since
\begin{equation*}
 \sigma(\mathscr{D}_c)=(-\infty,-mc^2]\cup[mc^2,\infty),
\end{equation*}
$\cE_c$ is strongly indefinite and unbounded from below on $\cS$.  Let
\begin{equation*}
 P_c^\pm
 :=\frac12\left(I_4\pm\frac{\mathscr{D}_c}{\abs{\mathscr{D}_c}}\right),
 \qquad
 E_c^\pm:=P_c^\pm H^{1/2}(\R^3,\C^4),
 \qquad
 u^+:=P_c^+u.
\end{equation*}
The ground state to \eqref{eq:1.1} can be obtained by considering the following
min-max problem
\begin{equation}
 e_c
 :=\inf_{\substack{w\in E_c^+\\ \norm[L^2]{w}=1}}
 \ \sup_{\substack{u\in\cS\\u^+\in\Span\{w\}}}\cE_c(u),
 \label{eq:1.5}
\end{equation}
 see \cite{Nolasco2021,ChenDingGuo2026} for more details.
A function $u_c\in\cS$ is called a ground state of \eqref{eq:1.1} if
\begin{equation*}
 \mathrm d\cE_c|_{\cS}(u_c)=0,
 \qquad
 \cE_c(u_c)=e_c.
\end{equation*}
The set of all ground states is denoted by
\begin{equation*}
 \cG_c:=\left\{u\in\cS:\cE_c(u)=e_c,\
 \mathrm d\cE_c|_{\cS}(u)=0\right\}.
\end{equation*}
It is clear that weak solutions of \eqref{eq:1.3} correspond to critical points of
the functional
\begin{equation}
  \cE_\infty(f)
  :=\frac1{2m}\norm[L^2]{\nabla f}^2
  -\frac12\int _{\R^3\times\R^3}
  \frac{\abs{f(x)}^2\abs{f(y)}^2}{\abs{x-y}}\,\dd x\dd y
  \label{eq:1.6}
\end{equation}
on the $L^2$-sphere
\begin{equation*}
  \cS':=\left\{
    f\in H^1(\R^3,\C^2):\norm[L^2]{f}=1
  \right\}.
\end{equation*}
The Gagliardo-Nirenberg
inequality implies $\cE_\infty$ is bounded from below on $\cS'$, and we
denote its infimum by
\begin{equation}
  e_\infty:=\inf_{f\in\cS'}\cE_\infty(f).
  \label{eq:1.7}
\end{equation}
A function $f\in\cS'$ is called a ground state of
\eqref{eq:1.3} if $\cE_\infty(f)=e_\infty$.
The set of all ground states of \eqref{eq:1.3} is denoted by
\[
 \cG_\infty:=\{f\in\cS':\cE_\infty(f)=e_\infty\}.
\]

The limiting minimization problem is the Choquard-Pekar problem.  Its
existence theory is a standard application of concentration compactness
\cite{Lions1984}. Lieb proved that the positive  minimizer is unique up
to translations \cite{Lieb1977}, and further qualitative properties were
established in \cite{MorozVanSchaftingen2013}.  The nondegeneracy of the
ground state follows from the analysis in
\cite{Lenzmann2009}.  We denote by $Q$ the unique positive radial 
ground state with unit mass.  It satisfies
\begin{equation}
 -\frac1{2m}\Delta Q+\lambda_\infty Q-
 (\abs{x}^{-1}\ast Q^2)Q=0,
 \qquad
 \norm[L^2]{Q}=1,
 \label{eq:1.8}
\end{equation}
for a uniquely determined $\lambda_\infty>0$.  We fix
\begin{equation*}
 \mathbf e_1=\binom10,
 \qquad
 \mathbf e_2=\binom01,
 \qquad
 f_\infty:=Q\mathbf e_1,
 \qquad
 h_\infty:=\frac1{2m}\cD f_\infty.
\end{equation*}

For the Dirac equation, 
we can not obtain the ground state by a direct minimization argument, due to
the strong indefiniteness of \eqref{eq:1.4}.  Variational
constructions of mass-constrained solitary waves for Maxwell-Dirac models
were developed in \cite{Nolasco2021}.  For the Dirac-Coulomb class
containing \eqref{eq:1.1}, the existence of ground states, their regularity,
and their nonrelativistic limit were obtained in \cite{ChenDingGuo2026}.
The properties needed below are collected in Proposition~\ref{prop:2.1}.

To the best of our knowledge, no uniqueness result is available for the
ground states of the Dirac-Coulomb system, after modulo the natural
symmetries of the equation.  Motivated by perturbative argument for the pseudo-relativistic
Hartree equation \cite{Lenzmann2009}, see also
\cite{GuoZeng2020,1}, we combine the nonrelativistic limit with the uniqueness and nondegeneracy of the ground state of the
nonrelativistic Hartree equation.  This allows us to identify the complete set
\(\cG_c\) as a single orbit under translations, phase transformations, and
spinorial rotations, and to prove that the kernel of the
linearized operator at a ground state consists exactly of the
infinitesimal symmetry directions.

A higher-order nonrelativistic expansion gives the relativistic corrections
which are invisible in the leading Hartree limit.  Of particular physical
interest is the expansion of \(e_c-mc^2\), which measures the successive
relativistic corrections to the ground state energy.  In the Dirac-Fock theory,
Esteban and S\'er\'e established the nonrelativistic limit of the
Dirac-Fock equations \cite{EstebanSere2001}.  More recently, Meng
\cite{Meng2025} obtained an \(O(c^{-2})\) correction to the ground state
energy and, for regular external potentials, identified the leading term in
terms of the mass-velocity, Darwin, and spin-orbit corrections.  For the 
pseudo-relativistic Hartree equation, all-order expansions were obtained in
\cite{ChenCotiZelatiWei2026}.  The Dirac-Coulomb problem is more involved:
the upper and lower spinor components and the Lagrange multiplier must be
expanded simultaneously, while the limiting linearization has a
six-dimensional symmetry kernel.  We overcome these difficulties by using
the augmented linearization and solve the
 linear equations recursively.

To describe the symmetry orbit of ground state to \eqref{eq:1.1}, for
$S\in\operatorname{Spin}(3)=SU(2)$, let $R_S\in SO(3)$ be determined by
\begin{equation}
 S(\sigma\cdot\xi)S^{-1}=:\sigma\cdot(R_S\xi),
 \qquad \forall ~\xi\in\R^3.
 \label{eq:1.9}
\end{equation}
Let
\begin{equation*}
 \cH:=(\R^3\rtimes\operatorname{Spin}(3))\times U(1),
\end{equation*}
here \(\rtimes\) means the action \(y'\mapsto R_Sy'\) of
\(\operatorname{Spin}(3)\) on the translation group.
For $\gamma=(y,S,e^{\ii\theta})\in\cH$ and
$u=(f,g)^T:\R^3\to\C^4$, define
\begin{equation*}
 (\cT_\gamma u)(x)
 :=e^{\ii\theta}
 \begin{pmatrix}S&0\\0&S\end{pmatrix}
 u\bigl(R_S^{-1}(x-y)\bigr).
\end{equation*}
The group law and the induced action on pairs of $\C^2$-valued
functions are given in \eqref{eq:2.4}.

Our first main result shows that the set of ground states forms a single orbit under
translations, phase transformations, and spinorial rotations.

\begin{theorem}
\label{thm:1.1}
There exists $c_0>0$, such that for $c>c_0$,
 the ground state of \eqref{eq:1.1}
is unique modulo translations, phase transformations, and spinorial
rotations.
More precisely,
\[
 \cG_c=\cH\cdot u_c,
\]
where $u_c=(f_c, g_c)^T$ is the unique representative with real radial function
$F_c,G_c$ satisfying
\begin{equation}\label{eq:1.10}
 \begin{gathered}
 f_c(x)=F_c(\abs x)\mathbf e_1,
 \qquad
 cg_c(x)=-\ii G_c(\abs x)(\sigma\cdot\widehat x)\mathbf e_1,\\
 F_c(r)>0\quad(r\ge0),
 \qquad G_c(0)=0,
 \qquad G_c(r)<0\quad(r>0).
 \end{gathered}
\end{equation}
Here $\widehat x=x/\abs x$ for $x\ne0$. 
\end{theorem}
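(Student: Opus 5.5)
We argue by contradiction through the nonrelativistic limit and the nondegeneracy of the Choquard--Pekar ground state, following the strategy of \cite{Lenzmann2009}. The first step is to construct a symmetric ground state. Proposition~\ref{prop:2.1} gives the existence of ground states, their regularity, and the convergence $f_c\to$ a Hartree ground state, $h_c\to\frac1{2m}\cD f$, $\lambda_c\to\lambda_\infty$ in $H^1$ after applying suitable elements of $\cH$. We restrict the min-max problem \eqref{eq:1.5} to the invariant subspace of spinors of the ansatz \eqref{eq:1.10}, i.e.\ the $j=1/2$, $\kappa=-1$ partial wave. By the principle of symmetric criticality, a restricted min-max point is a critical point of $\cE_c$ on $\cS$. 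To see that it attains $e_c$, we would use the reduction of \cite{Nolasco2021,ChenDingGuo2026} to a functional of $f$ alone, for which rearrangement-type arguments (or the limit argument below applied to any ground state) apply. For large $c$ the limit of this solution is $Q\mathbf e_1$. The sign conditions $F_c>0$ and $G_c<0$ on $(0,\infty)$ follow from the radial ODE system, since $G_c'+\frac2rG_c=-(\lambda_c-\phi_c)F_c+\dots$, combined with the uniform convergence $F_c\to Q>0$, $G_c\to Q'/(2m)<0$ and exponential decay estimates. The decay estimates handle large $r$, where uniform convergence alone is not sufficient.

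The core step is uniqueness. Let $v_c\in\cG_c$ be arbitrary. Proposition~\ref{prop:2.1} and the uniqueness of $Q$ up to symmetry for the two-component Choquard problem give $\gamma_c\in\cH$ with $\cT_{\gamma_c}v_c\to Q\mathbf e_1$ in the appropriate norms. Here one must check that two-component minimizers are $Q(\cdot-y)\,\zeta$ with $\zeta\in\C^2$ a unit vector, and that $\zeta$ can be moved to $e^{\ii\theta}\mathbf e_1$ by $SU(2)$. We then refine $\gamma_c$ by a modulation argument: using the implicit function theorem on the six-dimensional orbit (three translations, two effective rotation parameters modulo the stabilizer, one phase), we choose $\gamma_c$ so that $w_c:=\cT_{\gamma_c}v_c-u_c$ is orthogonal to the tangent directions $\partial_jf_\infty$, $\ii f_\infty$, and $\ii\sigma_k f_\infty$ (which give $Q\mathbf e_2$ and $\ii Q\mathbf e_2$ on the upper component). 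Subtracting the equations \eqref{eq:1.2} for the two solutions and dividing by $\norm{w_c}$, the normalized difference converges, after eliminating the lower component via the second equation and accounting for $\lambda$ differences and the mass constraint, to a solution of the augmented linearized Hartree problem
\[
L_+\eta_1-\mu Q\mathbf e_1=0\ \text{(real part)},\qquad L_-\eta_2=0,\qquad \langle Q\mathbf e_1,\eta\rangle=0,
\]
with $L_\pm$ acting componentwise on $\C^2$ and $L_+$ including the nonlocal term $-2(\abs{x}^{-1}\ast(Q\,\mathrm{Re}\,\eta^{(1)}))Q$ only on the $\mathbf e_1$ real part. By the nondegeneracy from \cite{Lenzmann2009}, the kernel is spanned by the six symmetry directions together with the multiplier direction. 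Orthogonality forces the limit to vanish, which contradicts normalization once strong convergence is established.

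The main obstacle is this last point: proving strong, non-weak, convergence of the normalized difference, and ensuring that the quotient by $\norm{w_c}$ is taken in a norm controlling both $f$ and $h=cg$ uniformly in $c$. Our plan is to use the uniform exponential decay and $H^1$ bounds from Proposition~\ref{prop:2.1}, invert $2m-c^{-2}(\dots)$ in the second equation to express $h$ in terms of $\cD f$ up to $O(c^{-2})$, and obtain compactness from the decay of the potential terms via Hardy--Littlewood--Sobolev. A minor further task is to show that the stabilizer of $u_c$ in $\cH$ is exactly the diagonal $U(1)$ pairing rotations about the $x_3$-axis with phases. This yields $\cG_c=\cH\cdot u_c$ and the uniqueness of the normalized representative.
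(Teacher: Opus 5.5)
Your overall architecture (nonrelativistic limit, nondegeneracy of $Q$, modulation onto the six-dimensional orbit, local uniqueness, ODE arguments for the signs) is the paper's. However, there is a genuine gap in the step that produces the symmetric representative.

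\textbf{The symmetric representative.} Grant that a critical point of the min-max restricted to the $\kappa=-1$ ansatz is a critical point of $\cE_c$ on $\cS$. You still need (i) that its level is $e_c$, and (ii) that it converges to $Q\mathbf e_1$.
\begin{itemize}
\item For (i), there is no rearrangement inequality for $\cE_c$ or for its fibre reduction $\mathcal J_c$ that would push a minimizing $w$ into the ansatz.
\item Your fallback, ``apply the limit argument to any ground state'', cannot start without (ii), because local uniqueness needs the reference solution to lie near the orbit.
\item Proving (ii) means redoing the existence and nonrelativistic-limit theory of Proposition~\ref{prop:2.1} inside the restricted class. You only assert this.
\end{itemize}
The paper never uses a symmetric variational problem. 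By Lemma~\ref{lem:2.9}, both $\widehat{\cF}_c$ and $\widehat{\cL}_\infty^{-1}$ preserve the closed subspace $\cW^2_{\mathrm{up}}\times\cW^2_{\mathrm{down}}\times\R\times\{0\}$. So the contraction $z\mapsto z-\widehat{\cL}_\infty^{-1}\widehat{\cF}_c(z)$ on $\overline B_{Mc^{-2}}(z_\infty)$ has its fixed point in that subspace. Hence the locally unique solution automatically has the form \eqref{eq:1.10} with $\bm a_c=0$ (Proposition~\ref{prop:2.11}). It is identified as a ground state \emph{a posteriori}: every element of $\cG_c$ is an $\cH$-image of it (Propositions~\ref{prop:3.5} and~\ref{prop:3.2}), $\cG_c\neq\emptyset$, and $\cE_c$ is $\cH$-invariant. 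You need an argument of this kind; with it, the restricted min-max becomes superfluous.

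\textbf{The uniqueness step.} Two points need correcting.

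First, the multiplier direction $(\varphi,\mu)=(\partial_\lambda Q_\lambda|_{\lambda=\lambda_\infty}\mathbf e_1,1)$, with $Q_\lambda$ the mass scaling of $Q$, solves $\cL\varphi+\mu f_\infty=0$. It is real and radial, so it is orthogonal to all six tangent directions, and orthogonality cannot remove it. Only the linearized mass constraint excludes it, and only because $\ipR{f_\infty}{\varphi}=\frac12\frac{\mathrm d}{\mathrm d\lambda}\norm[L^2]{Q_\lambda}^2=\frac1{4\lambda_\infty}\neq0$, i.e.\ $\ip{Q}{\cL_+^{-1}Q}=-\frac1{4\lambda_\infty}$ as in \eqref{eq:2.10}. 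This is exactly what makes $\cL_{\mathrm{mf}}$, and hence $\widehat{\cL}_\infty$, an isomorphism, so you must prove it.

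Second, the strong-convergence obstacle you single out disappears once you treat $h=cg$ as an independent unknown, as in \eqref{eq:1.2}, and augment by $\sum_ja_jv_j$ and the constraints $\ell_j$. Then $D_z\widehat{\cF}_c-D_z\widehat{\cF}_\infty=O(c^{-2})$ in $\cB(X_2\times\R^6,X_1\times\R^6)$ (Lemma~\ref{lem:2.8}). For two modulated solutions $z,z'$ one gets $\bigl[\int_0^1D_z\widehat{\cF}_c(z'+t(z-z'))\,\mathrm dt\bigr](z-z')=0$ with a uniformly invertible operator, so $z=z'$. No decay, Hardy--Littlewood--Sobolev or compactness is needed.

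\textbf{Signs of the profiles.} Uniform convergence plus decay does not give the signs, since $Q'(0)=0$ and $Q\to0$ at infinity.
\begin{itemize}
\item The paper gets $F_c>0$ for large $r$ from $V_c\le1/r$ and the identity $\int_{r_0}^\infty\bigl[(\lambda_c-V_c)F_c^2+(m+c^{-2}\omega_c+c^{-2}V_c)G_c^2\bigr]r^2\,\mathrm dr=0$, which holds whenever $F_c(r_0)=0$.
\item It gets $G_c<0$ from $r^2G_c(r)=\int_0^rs^2(\lambda_c-V_c)F_c\,\mathrm ds$ together with the single sign change of $\lambda_c-V_c$, which follows from $V_c'<0$.
\end{itemize}
Note also that the correct equation is $G_c'+\frac2rG_c=+(\lambda_c-V_c)F_c$; your sketch has the opposite sign.
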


\cref{thm:1.1} shows that $\cG_c$ is the orbit of $u_c$ under
the symmetry group $\mathcal H$ of the equation \eqref{eq:1.1}. We next prove
that the kernel of the constrained linearized operator at $u_c$
is spanned by the infinitesimal directions generated by these
symmetries.
For $s\in\R$, set
\begin{equation*}
 X_s:=H^s(\R^3,\C^2)\times H^s(\R^3,\C^2)\times\R.
\end{equation*}
Throughout the remainder of this section, let \(u_c=(f_c,g_c)^T\) denote the representative 
in \cref{thm:1.1}.  The real linearization of the
Dirac-Coulomb system under the $L^2$ constraint at
$(f_c,g_c,\lambda_c)$ is the operator
\begin{equation*}
 \cL_c:X_2\longrightarrow X_1
\end{equation*}
defined by
\begin{equation*}
\begin{split}
 \mathcal{L}_c(\varphi,\chi,\mu)
 = & \begin{pmatrix}
      c\mathcal{D}\chi+\lambda_c\varphi\\
      c\mathcal{D}\varphi+(\lambda_c-2mc^2)\chi\\
      2\ipR{f_c}{\varphi}+2\ipR{g_c}{\chi}
    \end{pmatrix} \quad - \bigl[|x|^{-1}\ast (|f_c|^2+|g_c|^2)\bigr]
    \begin{pmatrix}\varphi\\ \chi\\ 0\end{pmatrix}\\
 &
    + \bigl[\mu-2|x|^{-1}\ast \Re(f_c^\dagger\varphi+g_c^\dagger\chi)\bigr]
    \begin{pmatrix}f_c\\ g_c\\ 0\end{pmatrix}.
\end{split}
\end{equation*}

{
For \(u\in H^{1/2}(\R^3,\C^4)\), write
\begin{align*}
 \mathfrak q_c(u)
 :={\tfrac12}\mathrm d^2\cE_c(u_c)[u,u]
 -\omega_c\|u\|_{L^2}^2
\end{align*}

}

\begin{theorem}
\label{thm:1.2}
There exists $c_0>0$ such that, for every $c>c_0$, there holds
\begin{equation}
\begin{aligned}
 \Ker\cL_c
 &=\mathrm T_{u_c}(\cH\cdot u_c)\times\{0\}\\
 &=\Span_{\R}\left\{
 \begin{aligned}
  &(\partial_j f_c,\partial_j g_c,0),\quad 1\le j\le3,\\
  &(\ii f_c,\ii g_c,0),\\
  &\bigl(F_c(\abs x)\mathbf e_2,
    -\ii c^{-1}G_c(\abs x)(\sigma\cdot\widehat x)\mathbf e_2,0\bigr),\\
  &\bigl(\ii F_c(\abs x)\mathbf e_2,
    c^{-1}G_c(\abs x)(\sigma\cdot\widehat x)\mathbf e_2,0\bigr)
 \end{aligned}
 \right\}.
\end{aligned}
\label{eq:1.11}
\end{equation}
Moreover, there is \(\kappa>0\), independent of   \(c\), such that
\begin{equation}
 \mathfrak q_c(\xi)\ge\kappa\norm[c,+]{\xi}^2
 \quad\text{for }\quad
 \xi\in E_c^+\cap\{u_c\}^\perp
       \cap\bigl(\mathrm T_{u_c}(\cH\cdot u_c)\bigr)^\perp.
 \label{eq:1.12}
\end{equation}
where orthogonal complements are taken in real \(L^2\).
\end{theorem}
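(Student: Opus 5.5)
We argue by contradiction and compactness, perturbing off the nonrelativistic limit. The inclusion $\supseteq$ in \eqref{eq:1.11} is direct. Differentiate the family $\gamma\mapsto(\cT_\gamma u_c,\lambda_c)$ of solutions of \eqref{eq:1.2} at the identity. Translations give $(\partial_jf_c,\partial_jg_c,0)$ and the phase gives $(\ii f_c,\ii g_c,0)$. The $SU(2)$ generators $\ii\sigma_k$ combine with the infinitesimal rotations $x\wedge\nabla$; acting on the representative \eqref{eq:1.10}, they give $(\ii f_c,\ii g_c)$ (from $\sigma_3$) together with the two $\mathbf e_2$-directions listed. Here we use $\sigma_1\mathbf e_1=\mathbf e_2$, $\sigma_2\mathbf e_1=\ii\mathbf e_2$, and the fact that $\sigma\cdot\widehat x$ commutes with total angular momentum. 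Since each symmetry preserves the $L^2$ norm and $\lambda_c$, the last component of $\cL_c$ vanishes and $\mu=0$. So the tangent space $\mathrm T_{u_c}(\cH\cdot u_c)$ is six-dimensional, and linear independence for large $c$ follows from the limit $u_c\to(f_\infty,0)$ of Proposition~\ref{prop:2.1}.

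For $\subseteq$, suppose $c_n\to\infty$ and $(\varphi_n,\chi_n,\mu_n)\in\Ker\cL_{c_n}$ is real-$L^2$ orthogonal to the six symmetry directions, normalized so that $\norm[L^2]{\varphi_n}+\norm[L^2]{c_n\chi_n}+\abs{\mu_n}=1$. Set $\psi_n:=c_n\chi_n$, mirroring $h_c=cg_c$. The second equation becomes $\cD\varphi_n-2m\psi_n+c_n^{-2}(\cdots)\psi_n=0$, so $\psi_n=\frac1{2m}\cD\varphi_n+O(c_n^{-2})$. Substituting into the first equation yields
\[
-\tfrac1{2m}\Delta\varphi_n+\lambda_\infty\varphi_n-(\abs{x}^{-1}\ast Q^2)\varphi_n-2\bigl(\abs{x}^{-1}\ast\Re(f_\infty^\dagger\varphi_n)\bigr)f_\infty+\mu_nf_\infty=o(1),
\]
together with $\ipR{f_\infty}{\varphi_n}\to0$. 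For the remainders we use the uniform $H^s$ bounds and $\lambda_{c}\to\lambda_\infty$ from Proposition~\ref{prop:2.1}, and Hardy/HLS estimates for the convolution terms. Elliptic estimates, with $\lambda_\infty>0$ supplying coercivity of the free part, give uniform $H^2$ bounds on $\varphi_n$. Exponential decay of $Q$ and of the localized nonlocal terms makes the compact part converge, so a subsequence has $\varphi_n\to\varphi$ strongly in $H^1$ and $\mu_n\to\mu$.

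The limit $(\varphi,\mu)$ lies in the kernel of the augmented Hartree linearization at $Q\mathbf e_1$. By Lenzmann's nondegeneracy \cite{Lenzmann2009}, applied to $\Re$ and $\Im$ of each spin component (the components along $\mathbf e_2$ see only the operator $L_-$), this kernel is exactly: $\partial_jQ\,\mathbf e_1$, $\ii Q\mathbf e_1$, $Q\mathbf e_2$, $\ii Q\mathbf e_2$, with $\mu=0$. To rule out $\mu\neq0$ we pair with $Q$ and use $\frac{d}{d\lambda}\norm[L^2]{Q_\lambda}^2\ne0$. The limiting orthogonality conditions then force $\varphi=0$ and $\mu=0$, and strong convergence contradicts the normalization. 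This gives \eqref{eq:1.11} for $c>c_0$.

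For \eqref{eq:1.12}, I would again argue by contradiction with $\xi_n\in E_{c_n}^+$, $\norm[c_n,+]{\xi_n}=1$ and $\mathfrak q_{c_n}(\xi_n)\le o(1)$. On $E_c^+$ the kinetic part $\langle(\mathscr D_c-mc^2)\xi,\xi\rangle$ behaves like $\frac1{2m}\norm[L^2]{\nabla\xi^{\mathrm{up}}}^2$ at low frequency, and the lower spinor is $O(c^{-1})$ slaved. Hence $\mathfrak q_c$ converges to the Hartree Hessian $\langle L_+\Re\varphi,\Re\varphi\rangle+\langle L_-\Im\varphi,\Im\varphi\rangle$ at $f_\infty$. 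That Hessian is coercive on the complement of $f_\infty$ and of the six limiting directions, by minimality of $Q$ together with nondegeneracy.

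The main obstacle is controlling the high-frequency part. There the norm $\norm[c,+]{\cdot}$ is like $c\abs{\nabla}^{1/2}$ rather than $\abs{\nabla}$, so weak limits may lose mass. I would split $\xi_n$ with a frequency cutoff at scale $\ll c_n$. The high-frequency piece gets a positive lower bound on $\mathfrak q_{c_n}$ because the Coulomb terms are bounded relative to $\abs{\nabla}$ via Kato's inequality, uniformly in $c$. The low-frequency piece passes to the Schr\"odinger limit, and the constants are merged to give a $\kappa$ independent of $c$. Handling the cross terms and the fact that orthogonality conditions hold only approximately after projection is where I expect the real work to lie.
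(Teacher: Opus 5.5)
\textbf{Kernel \eqref{eq:1.11}: correct, but by a different route.} The paper does not argue by contradiction here. It shows that $\widehat{\cL}_c=D_z\widehat{\cF}_c(f_c,h_c,\lambda_c,0)$ converges to $\widehat{\cL}_\infty$ in $\cB(X_2\times\R^6,X_1\times\R^6)$, so $\widehat{\cL}_c$ is an isomorphism for large $c$ by a Neumann series. It then uses the identity $\widehat{\cL}_c(\varphi,c\chi,\mu,0)=\bigl(\diag(1,c^{-1},1)\cL_c(\varphi,\chi,\mu),(\ipR{\varphi}{v_j})_{j}\bigr)$ and the invertibility of $M_c=(\ipR{\eta_{j,c}}{v_i})_{ij}$. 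Together these show that any kernel element, minus a suitable combination of the six tangent vectors, lies in $\Ker\widehat{\cL}_c=\{0\}$.

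Your compactness argument rests on the same input: nondegeneracy of $\cL_\pm$ and $\ipR{Q}{\cL_+^{-1}Q}=-1/(4\lambda_\infty)\neq0$. However, you must re-derive uniform elliptic bounds and a Rellich-plus-decay compactness step. The paper gets these for free from the augmented operator of Section~\ref{sec:2}, and its route also gives a uniform bound on $\widehat{\cL}_c^{-1}$.

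Two details in your version need fixing. First, the uniform $H^1$ bound on $\psi_n=c_n\chi_n$ must come first, directly from the first equation, which bounds $\norm[L^2]{\cD\psi_n}$ by the normalization. Only then can you substitute $\psi_n\approx\frac1{2m}\cD\varphi_n$; otherwise the $c_n^{-2}\cD(\cdots)$ remainder is not yet controlled. Second, $\abs{x}^{-1}\ast Q^2$ decays only like $\abs{x}^{-1}$. Compactness of that term comes from its vanishing at infinity, not from exponential decay.

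\textbf{Coercivity \eqref{eq:1.12}: there is a genuine gap.} You set up the same contradiction argument as Lemma~\ref{lem:3.9} and name the right limiting ingredient, namely coercivity of $\ipR{\cL\varphi}{\varphi}$ on $\{f_\infty\}^\perp\cap\cK^\perp$ (Lemma~\ref{lem:3.8}). But you stop exactly at the step that produces the contradiction, deferring it to an unexecuted frequency decomposition with cross terms and approximate orthogonality. None of that is needed.

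The missing observation is that only the Coulomb terms of $\mathfrak q_c$ need compactness, while the free part is exactly coercive on $E_c^+$. With $t_c(\xi)=\sqrt{c^2\abs{\xi}^2+m^2c^4}-mc^2\ge0$, one has $\ipR{(\mathscr D_c-\omega_c)u}{u}=\int t_c\abs{\widehat u}^2\dd\xi+\lambda_c\norm[L^2]{u}^2\ge\min\{1,\lambda_c\}\norm[c,+]{u}^2$. The argument then closes as follows.
\begin{enumerate}
\item The normalized sequence $u_n=(\varphi_n,\chi_n)$ is bounded in $H^{1/2}$ with $\norm[L^2]{\chi_n}^2\le(2mc_n^2)^{-1}$. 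Hence $u_n\rightharpoonup(\varphi,0)$ in $H^{1/2}$ and strongly in $L^2_{\mathrm{loc}}$.
\item Since $t_{c_n}\to\abs{\xi}^2/(2m)$ locally uniformly, lower semicontinuity gives $\varphi\in H^1$ and $\liminf\int t_{c_n}\abs{\widehat u_n}^2\ge\frac1{2m}\norm[L^2]{\nabla\varphi}^2$.
\item The direct and exchange Coulomb terms converge, because $\abs{x}^{-1}\ast\abs{u_{c_n}}^2$ is uniformly bounded and vanishes at infinity, and $u_{c_n}\to(f_\infty,0)$ in every $H^s$. The orthogonality conditions pass to the weak limit.
\item Hence $0\ge\liminf\mathfrak q_{c_n}(u_n)\ge\ipR{\cL\varphi}{\varphi}\ge\kappa_\infty\norm[H^1]{\varphi}^2$, so $\varphi=0$.
\item With $\varphi=0$, every Coulomb term is $o(1)$, and $\mathfrak q_{c_n}(u_n)\ge\min\{1,\lambda_\infty/2\}+o(1)$, contradicting $\mathfrak q_{c_n}(u_n)\le o(1)$.
\end{enumerate}
Any mass lost in the weak limit, whether to spatial infinity or to high frequencies, is absorbed by the positive free part. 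So the high-frequency ``obstacle'' you anticipate does not arise. Kato's inequality is also unnecessary, since both Coulomb terms are bounded on $L^2$.
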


{
\begin{remark*}
\cref{thm:1.2} shows that the only zero modes of the constrained linearization
are those generated by the symmetries of the equation. Thus the ground state orbit is
nondegenerate.
For $w\in E_c^+\cap\mathcal{S}$, set
\[
 \mathcal J_c(w):=
 \sup_{\substack{u\in\cS\\P_c^+u\in\Span\{w\}}}\cE_c(u),
 \qquad
 w_c:=\frac{P_c^+u_c}{\norm[L^2]{P_c^+u_c}}.
\]
\eqref{eq:1.12} yields coercivity of the constrained Hessian of
\(\mathcal J_c\) transverse to \(\cH\cdot w_c\).
In particular, for some \(\delta>0\), uniformly for large \(c\),
\[
\inf_{\gamma\in\cH}\norm[c,+]{w-\cT_\gamma w_c}^2\lesssim  \mathcal J_c(w)-e_c
\]
provided $ \inf\limits_{\gamma\in\cH}\norm[c,+]{w-\cT_\gamma w_c}<\delta$, where
\[
\norm[c,+]{u}^2
 :=\norm[L^2]{u}^2+
 \ipR{(|\mathscr{D}_c|-mc^2)u}{u}.
\]
The proof is given at the end of Section~\ref{sec:3}.
This is local quantitative min-max stability, not positivity of
\(\cE_c|_{\cS}\), whose negative directions remain.
\end{remark*}
}

For $s\ge0$, let
\begin{align*}
 \cW^s_{\mathrm{up}}
 &:={\{F(\abs x)\mathbf e_1\in H^s(\R^3,\C^2):F\text{ is real}\}},\\
 \cW^s_{\mathrm{down}}
 &:={\{-\ii G(\abs x)(\sigma\cdot\widehat x)\mathbf e_1
       \in H^s(\R^3,\C^2):G\text{ is real}\}}.
\end{align*}
These are closed real subspaces of $H^s(\mathbb{R}^3, \mathbb{C}^2)$. The upper component is radial,
whereas the lower component has the displayed spinorial angular
structure. 

Set
\[
 \begin{gathered}
 h_c=cg_c,\qquad \lambda_c=mc^2-\omega_c,\qquad
 z_c=(f_c,h_c,\lambda_c,0),\\
z_\infty=(f_\infty,h_\infty,\lambda_\infty,0),
 \end{gathered}
\]
The next theorem gives the asymptotic expansions
of the representative ground state in Theorem~\ref{thm:1.1}.
\begin{theorem}\label{thm:1.3}
There exist unique 
\[
 z_n=(f_n,h_n,\lambda_n,0)
 \in\bigcap_{s\ge0}
 \bigl(\cW^s_{\mathrm{up}}\times\cW^s_{\mathrm{down}}
       \times\R\times\{0\}\bigr),
 \qquad n\ge1,
\]
and $R>0$, independent of $s\ge0$, such that, for all sufficiently
large $c$ with $c^{-2}<R$,
\begin{align}
 z_c
 &=z_\infty+\sum_{n=1}^{\infty}\frac{z_n}{c^{2n}}
 \quad\text{in }X_s\times\R^6,\qquad s\ge0,
 \label{eq:1.16}\\
 e_c-mc^2
 &=e_\infty+\sum_{n=1}^{\infty}\frac{\mathfrak e_n}{c^{2n}},
 \qquad
 \mathfrak e_n=-\frac{\lambda_n}{2n+3},
 \label{eq:1.17}
\end{align}

The coefficients $z_n$ are determined recursively by
\begin{align}
 \widehat{\cL}_\infty z_n
 ={}&
 -\sum_{k=2}^{\min\{3,n\}}\frac1{k!}
 \sum_{\substack{i_1+\cdots+i_k=n\\i_1,\ldots,i_k\ge1}}
 D_z^k\widehat{\cF}_\infty(z_\infty)
 [z_{i_1},\ldots,z_{i_k}]
 \notag\\
 &-\sum_{\ell=1}^{\min\{2,n\}}
 \sum_{k=0}^{\min\{3,n-\ell\}}\frac1{k!}
 \sum_{\substack{i_1+\cdots+i_k=n-\ell\\i_1,\ldots,i_k\ge1}}
 D_z^k\cC_\ell(z_\infty)
 [z_{i_1},\ldots,z_{i_k}],
 \label{eq:1.13}
\end{align}
where $\widehat{\cL}_\infty=D_z\widehat{\cF}_\infty(z_\infty)$
and $\cC_1,\cC_2$ are defined in
\eqref{eq:2.16} and \eqref{eq:4.2}, respectively.
For $k=0$, the last inner sum is $\cC_\ell(z_\infty)$
if $n=\ell$ and zero otherwise.
\end{theorem}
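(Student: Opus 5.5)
We will prove \cref{thm:1.3} by an analytic implicit function argument in the small parameter $\varepsilon:=c^{-2}$, applied to the system \eqref{eq:1.2} after it is restricted to the symmetric class $\cW^s_{\mathrm{up}}\times\cW^s_{\mathrm{down}}\times\R$. Write \eqref{eq:1.2} as $\widehat{\cF}(z,\varepsilon)=0$ for $z=(f,h,\lambda,\nu)$. Here the extra slot $\nu\in\R^6$ is the auxiliary multiplier of the augmented linearization: it appears as $\nu$ times the six symmetry directions in the equations, and orthogonality conditions to those directions pin down the representative. In $\varepsilon$ the map is a polynomial. It has the form $\widehat{\cF}_\infty(z)+\varepsilon\,\cC_1(z)+\varepsilon^2\cC_2(z)$, where $\cC_1$ comes from the $c^{-2}$ terms (the $\lambda h$ term, the $\abs{h}^2$ term in the convolution, and the $c^{-2}\norm{h}^2$ term in the mass) and $\cC_2$ from the $c^{-4}\abs{h}^2$ terms. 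Each of $\widehat{\cF}_\infty,\cC_1,\cC_2$ is a polynomial in $z$ of degree at most $3$, which is why the sums in \eqref{eq:1.13} are truncated at $k\le3$ and $\ell\le2$.

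\textbf{Invertibility of the augmented linearization.} The key step is to show that $\widehat{\cL}_\infty=D_z\widehat{\cF}_\infty(z_\infty)$ is an isomorphism from the $s+1$ spaces onto the $s$ spaces, for every $s\ge0$. In the limit, the second equation gives $h=\frac1{2m}\cD f$. The first equation then becomes the constrained Hartree linearization $-\frac1{2m}\Delta+\lambda_\infty-\abs{x}^{-1}\ast Q^2-2(\abs{x}^{-1}\ast Q\Re(\cdot))Q$ together with the mass constraint. By the nondegeneracy of $Q$ \cite{Lenzmann2009}, its kernel is spanned exactly by the six symmetry modes $\partial_jQ\mathbf e_1$, $\ii Q\mathbf e_1$, $Q\mathbf e_2$, $\ii Q\mathbf e_2$. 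Bordering by these six modes, through $\nu$ and the orthogonality conditions, produces a Fredholm index-zero operator with trivial kernel, hence an isomorphism.

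Restricting to the symmetric class then removes the six modes altogether. Translations are odd, and the $\ii$, $\mathbf e_2$ and $\ii\mathbf e_2$ directions are incompatible with the real $\mathbf e_1$ structure of $\cW_{\mathrm{up}}$. The $\nu$ component of a solution is therefore forced to vanish, which explains the final coordinate $0$ in $z_n$. Elliptic regularity upgrades the inverse bounds to every $H^s$, uniformly in $s$.

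\textbf{Analytic branch and the recursion.} Since $\widehat{\cF}$ is polynomial in $(z,\varepsilon)$, the analytic implicit function theorem in Banach spaces gives a unique branch $z(\varepsilon)$ near $z_\infty$ that is analytic in $|\varepsilon|<R$. Its radius $R$ depends only on $\norm{\widehat{\cL}_\infty^{-1}}$ and on polynomial bounds for the multilinear terms, and these are uniform in $s$ once we use that $H^s\cap$(decay) is an algebra for $s>3/2$ and interpolate for lower $s$. Expanding $z(\varepsilon)=z_\infty+\sum z_n\varepsilon^n$ and matching powers of $\varepsilon$ yields exactly \eqref{eq:1.13}, since the left side is $\widehat{\cL}_\infty z_n$ and the right side collects all products with lower indices. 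This gives existence and uniqueness of the $z_n$, and shows that each lies in the symmetric class because the right side does.

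\textbf{Identification with $z_c$, and the main obstacle.} By \cref{thm:1.1} and the nonrelativistic convergence in \cref{prop:2.1}, $z_c\to z_\infty$ in $X_s$. Moreover $z_c$ lies in the symmetric class, with $\nu=0$ since $z_c$ solves the unaugmented system. Local uniqueness in the implicit function theorem then gives $z_c=z(c^{-2})$ for large $c$. The main obstacle is making this convergence hold in every $X_s$ rather than only in $H^1$. We would bootstrap it from the equation, writing $f$ as the solution of an elliptic problem with $(1-\varepsilon\cdots)$-perturbations of $-\frac1{2m}\Delta$, using the uniform exponential decay from \cref{prop:2.1}.

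\textbf{Energy expansion.} For \eqref{eq:1.17} we would use a Pohozaev/virial identity. Along the branch, write $e_c-mc^2=\Phi(\varepsilon)$ and $\lambda_c=\lambda(\varepsilon)$. Scaling the critical point in $x$ together with the Hellmann–Feynman derivative $\frac{d}{d\varepsilon}$ of the energy, where the multiplier term is killed by criticality, should produce a linear ODE of the form $2\varepsilon\Phi'(\varepsilon)+3\Phi(\varepsilon)=-\lambda(\varepsilon)+\text{const}$, consistent with $3e_\infty=-\lambda_\infty$ in the Choquard case. Comparing the coefficients of $\varepsilon^n$ gives $(2n+3)\mathfrak e_n=-\lambda_n$. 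Deriving this exact identity, in particular verifying that the $mc^2$ rest term and the kinetic scaling combine correctly, is the computation we would check most carefully.
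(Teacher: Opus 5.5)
\textbf{Where your outline matches the paper.} Most of your plan follows the paper. You use the augmented map, polynomial in $\varepsilon=c^{-2}$, and invertibility of $\widehat{\cL}_\infty$ on $X_s\times\R^6$ and on the symmetric class. You apply a holomorphic implicit function theorem, match Taylor coefficients to get \eqref{eq:1.13}, and identify $z_c$ by local uniqueness. For the energy you derive $2\varepsilon\mathfrak e'+3\mathfrak e+\lambda=0$ from Hellmann--Feynman plus the virial identities of Lemma~\ref{lem:3.3}; your constant is $3e_\infty+\lambda_\infty=0$.

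\textbf{The gap: $s$-independence of $R$.} You derive it from the claim that the implicit-function constants are uniform in $s$, and that claim is false. The inverse bound $\norm[\cB(X_{s-1}\times\R^6,X_s\times\R^6)]{\widehat{\cL}_\infty^{-1}}$ comes from an elliptic bootstrap with an $s$-dependent constant $C_s$. The trilinear estimate \eqref{eq:2.1} also has an $s$-dependent constant. Moreover $\norm[H^s]{Q}\to\infty$ as $s\to\infty$, so the size of $\cC_1(z_\infty)$ and of the quadratic and cubic terms at $z_\infty$ grows with $s$. Running the implicit function theorem separately in each $X_s$ therefore gives radii $\rho_s>0$ that may shrink to $0$. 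You would then only obtain \eqref{eq:1.16} in $X_s$ for $c^{-2}<\rho_s$, not for one $R$ serving every $s$.

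\textbf{The missing idea: propagate regularity along the complex family (Lemma~\ref{lem:4.4}).}
\begin{enumerate}
\item Build the branch once in $X_2$, and let $R$ be the radius of convergence of its Taylor series there.
\item By the identity theorem, the Taylor sum solves the polynomial equation $\widehat{\cF}^\C(\varepsilon,z(\varepsilon))=0$ on the whole disc $|\varepsilon|<R$.
\item Suppose such a solution is holomorphic into $X_s^\C\times\C^6$ with $s\ge\tfrac12$. By Lemma~\ref{lem:2.2}, system \eqref{eq:4.8} expresses $\cD^\C f$ and $\cD^\C h$ as holomorphic $(H^s)^\C$-valued maps.
\item The identity $p=(1-\Delta)^{-1}p+(1-\Delta)^{-1}\cD^\C(\cD^\C p)$ then shows that $f$ and $h$ are holomorphic into $(H^{s+1})^\C$ on the same disc.
\item Iterating, the Taylor series converges in every $X_s$ for $|\varepsilon|<R$; lower $s$ follow by embedding. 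This is exactly the $s$-independence the statement asserts.
\end{enumerate}
You apply your bootstrap only along real $c$, to get $z_c\to z_\infty$. That convergence is already supplied in every $H^s$ by Lemma~\ref{lem:2.14} (or Proposition~\ref{prop:2.1}), so it is not the real obstacle. The bootstrap has to be applied to the complex-analytic family on the whole disc instead.

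\textbf{A smaller point.} For \eqref{eq:1.17} you also need to say why $\Phi$ is holomorphic on that disc. The paper uses \eqref{eq:3.19} to write $\Phi(\varepsilon)=-2m\ipR{h(\varepsilon)}{h(\varepsilon)}^\C$, which inherits the radius $R$.
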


The paper is organized as follows. Section~\ref{sec:2} collects the
preliminary results.
Section~\ref{sec:3} proves Theorems~\ref{thm:1.1} and~\ref{thm:1.2}.
Section~\ref{sec:4} proves
Theorem~\ref{thm:1.3}.

\subsection*{Notations}
Throughout this paper, we make use of the following notations.
\begin{itemize}

\item $\|\cdot\|_{L^q}$ denotes the usual norm of
$L^q(\R^3,\C^N)$, where $N=1,\, 2$ or $4$;

\item $\|\cdot\|_{H^s}$ denotes the usual norm of
$H^s(\R^3,\C^N)$, where $N=1,\, 2$ or $4$;

\item $\mathrm T_u\mathcal M$ denotes the tangent space of a
smooth manifold $\mathcal M$ at $u\in\mathcal M$;

\item $D\cF(u)$ denotes the Fr\'echet derivative of a map
$\cF$ at $u$, while $\mathrm{d}\cE(u)$ denotes the Fr\'echet derivative
of a real-valued functional $\cE$ at $u$;

\item
$z^\dagger$ denotes the Hermitian transpose of
$z\in\C^N$;

\item for $f,g\in L^2(\R^3,\C^N)$,
\[
\ip{f}{g}
:=
\int_{\R^3} f(x)^\dagger g(x)\,\dd x,
\qquad
\ipR{f}{g}
:=
\Re\ip{f}{g};
\]

\item For a normed space $X$, and nonempty subsets
$A, B\subset X$, the distance between $A$ and $B$ is defined by
\[
\operatorname{dist}_X(A,B)
:=\inf_{v\in A, u\in B}\norm[X]{u-v};
\]

\item $\Re$ denotes the real part of a complex valued function;

\item $\mathcal B(X,Y)$ denotes the Banach space of all bounded linear operators from a Banach space $X$ to a Banach space $Y$, endowed with the operator norm
\[
\|T\|_{\mathcal B(X,Y)}
:=
\sup_{\|u\|_X=1}\|Tu\|_Y;
\]

\item $\Ker\cL$ denotes the kernel of a linear operator $\cL$;

\item $C>0$ denotes a constant whose value may change from line
to line;

\item $a\lesssim b$ means that $a\le Cb$ for some constant
$C>0$ independent of the relevant parameters. {We write
$a\asymp b$ when $a\lesssim b$ and $b\lesssim a$.}

\item For real derivatives, complex Sobolev spaces are regarded as real
Banach spaces. For a real Banach space $X$, set
\[
 \begin{gathered}
 X^\C=X\oplus\ii X,
 \qquad \overline{x+\ii y}=x-\ii y,\\
 \norm[X^\C]{x+\ii y}
 :=\sup_{\theta\in[0,2\pi]}
       \norm[X]{x\cos\theta-y\sin\theta}.
 \end{gathered}
\]
This conjugation is the one on the complexification, not pointwise
conjugation of the original spinor. A superscript $\C$ on a real
polynomial denotes its termwise complexification; in particular,
$\ipR{\cdot}{\cdot}^\C$ is complex bilinear.

\end{itemize}

\section{\texorpdfstring{{Preliminary results}}{Preliminary results}}
\label{sec:2}

We start by recalling briefly some basic facts concerning the 
 ground state to \eqref{eq:1.1}.

\begin{proposition}[\cite{Nolasco2021,ChenDingGuo2026}]\label{prop:2.1}
{
There exists \(c_0>0\) such that the following holds.

\begin{enumerate}[label=\textup{(\arabic*)}]
\item \emph{(Existence)}
For every \(c>c_0\), there is at least one
\(u_c=(f_c,g_c)^T\in\cG_c\), with a multiplier
\(\omega_c\in(0,mc^2)\).

\item \emph{(Regularity)}
For \(c>c_0\) 
\begin{equation*}
\cG_c\subset \bigcap_{s>0} H^s(\R^3,\C^4),
\end{equation*}
and, for every \(s>0\),
\begin{equation*}
 \sup_{c>c_0}\sup_{u_c\in \cG_c}
 \left(\norm[H^s]{f_c}+\norm[H^s]{cg_c}\right)<\infty.
\end{equation*}

\item \emph{(Nonrelativistic limit)} For every $s>0$,
\begin{equation*}
 mc^2-\omega_{c}\to\lambda_\infty,
 \qquad
 e_{c}-mc^2\to e_\infty,
\end{equation*}
\[
\sup_{u_c=(f_c,g_c)^T\in \cG_c} \operatorname{dist}_{H^s}\left(
f_c, \cG_\infty
\right)\to 0
\]
and
\[
\sup_{u_c=(f_c,g_c)^T\in \cG_c} \operatorname{dist}_{H^s}\left(
cg_c, \frac{1}{2m}\cD\cG_\infty
\right)\to 0
\]
as $c\to \infty$.

\end{enumerate}
}
\end{proposition}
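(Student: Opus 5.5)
The statement collects results of \cite{Nolasco2021,ChenDingGuo2026}, so the plan is to reproduce their three-step architecture: a reduction of the min-max \eqref{eq:1.5} to a minimization on $E_c^+\cap\cS$, elliptic bootstrap through the system \eqref{eq:1.2}, and a comparison with the Choquard--Pekar problem \eqref{eq:1.7}.

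\emph{Existence.} For fixed $w\in E_c^+$ with $\norm[L^2]{w}=1$, I will parametrize $\{u\in\cS:u^+\in\Span\{w\}\}$ as $u=(\sqrt{1-\norm{v}^2}\,w+v)$ up to phase, with $v\in E_c^-$, $\norm[L^2]{v}<1$. On this set $\cE_c$ is strictly concave in $v$ near $0$, because on $E_c^-$ the quadratic part is $\le -mc^2\norm{v}^2$ and dominates the Coulomb term for large $c$. This gives a unique maximizer $v=\eta_c(w)$ with $\norm[c,+]{\eta_c(w)}=O(c^{-2})$ when $w$ has bounded $\norm[c,+]{\cdot}$. The functional $\mathcal J_c(w)=\cE_c(w+\eta_c(w))$ (normalized) is then $C^1$ on $E_c^+\cap\cS$, and critical points of $\mathcal J_c$ correspond to critical points of $\cE_c|_\cS$.

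I will then minimize $\mathcal J_c$. Testing with $w=P_c^+(f_\infty,c^{-1}h_\infty)$ (suitably normalized) gives $e_c-mc^2\le e_\infty+o(1)<0$. The strict subadditivity $e_c(\mu)-\mu mc^2<e_c(\theta)-\theta mc^2+e_c(\mu-\theta)-(\mu-\theta)mc^2$ follows from the homogeneity of the Coulomb term together with negativity of the binding energy. Lions' concentration compactness \cite{Lions1984} then excludes vanishing and dichotomy, so a minimizing sequence converges modulo translations. The Lagrange multiplier satisfies $\omega_c<mc^2$ because $\lambda_c=mc^2-\omega_c$ is comparable to $-2(e_c-mc^2)+\text{Coulomb}>0$ via a Pohozaev/virial identity. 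It satisfies $\omega_c>0$ because the Coulomb energy is $O(1)$ while $mc^2\to\infty$.

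\emph{Regularity with uniform bounds.} Writing $u_c$ in the form \eqref{eq:1.2}, the second equation gives
\[
h_c=\bigl(2m-c^{-2}(\lambda_c-V_c)\bigr)^{-1}\cD f_c, \qquad V_c=\abs{x}^{-1}\ast\bigl(\abs{f_c}^2+c^{-2}\abs{h_c}^2\bigr),
\]
where the inverse is understood as a Neumann series, since $V_c\in L^\infty$ uniformly by Hardy's inequality once $\norm[H^{1/2}]{u_c}$ is bounded. Substituting into the first equation gives a Schr\"odinger-type equation
\[
-\tfrac1{2m}\Delta f_c+\lambda_cf_c=V_cf_c+O(c^{-2})\text{-terms},
\]
which is uniformly elliptic in $c$. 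The starting $H^1$ bound on $(f_c,h_c)$ comes from $\mathcal J_c(w_c)\le e_\infty+o(1)$ together with the estimate on $\eta_c$. Bootstrapping with the smoothing estimate $\norm[H^{s+2}]{\abs{x}^{-1}\ast\rho}\lesssim\norm[H^{s}]{\rho}$ (modulo low frequencies) then yields uniform $H^s$ bounds for every $s$.

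\emph{Nonrelativistic limit.} From the uniform bounds, any sequence $c_n\to\infty$ has, after translation, $f_{c_n}\to f$ in $H^s_{\mathrm{loc}}$ and $\lambda_{c_n}\to\lambda$. The reduced equation passes to the limit \eqref{eq:1.3}. I expect the energy expansion to give $e_c-mc^2=\cE_\infty(f_c)+O(c^{-2})$, so that $\liminf (e_c-mc^2)\ge e_\infty$, and combined with the upper bound this forces $f\in\cG_\infty$ and $e_c-mc^2\to e_\infty$. Tightness, which upgrades local to global $H^s$ convergence, follows again from strict subadditivity. Convergence of $\lambda_c\to\lambda_\infty$ follows by pairing the equation with $f$, and $cg_c=h_c\to\frac1{2m}\cD f$ comes from the formula for $h_c$. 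Arguing by contradiction over sequences yields the stated uniform $\sup$ over $\cG_c$.

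\emph{Main obstacle.} The delicate point is uniformity in $c$ of the reduction. One must show that $\eta_c(w)$ is small in the $\norm[c,+]{\cdot}$ scale and that the concavity constant on $E_c^-$ does not degenerate, while the $H^{1/2}$ norm of $P_c^\pm$ behaves like $c^{-1}$ times the $H^1$ norm on the relevant frequencies. My first attempt would be to estimate $\eta_c(w)$ via the implicit function theorem in the weighted norm $\norm[c,+]{\cdot}$, using $\norm{(\abs{\mathscr D_c}-mc^2)^{1/2}u}\asymp \norm{\nabla u}/\sqrt{2m}$ for frequencies $\ll c$.
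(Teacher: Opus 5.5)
The paper does not prove this proposition itself---it is imported from \cite{Nolasco2021,ChenDingGuo2026}---and your outline is essentially the strategy of those works: fibre reduction to $\mathcal J_c$ on $E_c^+\cap\cS$, minimization with compactness modulo translations, a bootstrap through \eqref{eq:1.2}, and comparison with the Choquard--Pekar problem via an energy identity of the type \eqref{eq:3.21}, which the paper later records in Lemmas~\ref{lem:3.3}--\ref{lem:3.4}. Two small corrections: $\omega_c<mc^2$ needs no virial identity, since pairing \eqref{eq:1.1} with $u_c$ gives $\omega_c=e_c-\frac12\iint\abs{u_c(x)}^2\abs{u_c(y)}^2\abs{x-y}^{-1}\dd x\dd y<e_c<mc^2$; and the energy bound only controls $\ipR{(\abs{\mathscr D_c}-mc^2)w_c}{w_c}$, i.e.\ a uniform $H^{1/2}$ bound and (via $e_c-mc^2=-2m\norm[L^2]{h_c}^2$) a bound on $\norm[L^2]{h_c}$, which both rely on a uniform lower bound $e_c-mc^2\ge -C$ that you should state explicitly, after which the uniform $H^1$ bound comes from the system \eqref{eq:1.2} rather than from the energy.
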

\begin{remark*}
The limits of $\lambda_c$ and $e_c-mc^2$ hold for the whole family,
without passing to a subsequence.
\end{remark*}

\begin{lemma}
\label{lem:2.2}
For every \(s\ge\tfrac12\) and \(f,g,h\in H^s(\R^3)\),
\begin{equation}
 \norm[H^s]{\bigl(\abs{x}^{-1}\ast(fg)\bigr)h}
 \lesssim \norm[H^s]{f}
 \norm[H^s]{g}
 \norm[H^s]{h}.
 \label{eq:2.1}
\end{equation}
\end{lemma}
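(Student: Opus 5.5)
We prove \eqref{eq:2.1} by splitting the operation into two steps. First we control the Coulomb potential $V:=\abs{x}^{-1}\ast(fg)$ in a multiplier space. Then we use that multiplier bound to estimate the product $Vh$ in $H^s$.

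\emph{Step 1 ($s=\tfrac12$).} We use the Kato-type inequality $\int\abs{x-y}^{-1}\abs{h(y)}^2\dd y\lesssim\norm[\dot H^{1/2}]{h}^2$, which is Hardy/Kato uniformly in $x$. With Cauchy--Schwarz it gives $\norm[L^\infty]{V}\lesssim\norm[H^{1/2}]{f}\norm[H^{1/2}]{g}$. For the derivative part we write $\abs{\nabla}^{1/2}(Vh)$ and apply the fractional Leibniz (Kato--Ponce) rule:
\[
\norm[H^{1/2}]{Vh}\lesssim\norm[L^\infty]{V}\norm[H^{1/2}]{h}+\norm[L^{p}]{\abs{\nabla}^{1/2}V}\norm[L^{q}]{h},
\qquad \tfrac1p+\tfrac1q=\tfrac12 .
\]
Choose $q=3$, so $p=6$ and $H^{1/2}\hookrightarrow L^3$. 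Since $\abs{\nabla}^{1/2}V=c\abs{\nabla}^{-3/2}(fg)$, Hardy--Littlewood--Sobolev gives
\[
\norm[L^6]{\abs{\nabla}^{-3/2}(fg)}\lesssim\norm[L^{r}]{fg},\qquad \tfrac1r=\tfrac16+\tfrac12=\tfrac23 .
\]
Hence $r=3/2$, and $\norm[L^{3/2}]{fg}\le\norm[L^3]{f}\norm[L^3]{g}\lesssim\norm[H^{1/2}]{f}\norm[H^{1/2}]{g}$. This closes the case $s=\tfrac12$.

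\emph{Step 2 ($s>\tfrac12$).} We again use Kato--Ponce:
\[
\norm[H^s]{Vh}\lesssim\norm[L^\infty]{V}\norm[H^s]{h}+\norm[L^{p}]{\abs{\nabla}^sV}\norm[L^{q}]{h}.
\]
The term $\norm[L^\infty]{V}$ is handled as before.

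For $\abs{\nabla}^sV=c\abs{\nabla}^{s-2}(fg)$, split into low and high frequencies.

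\begin{itemize}
\item At low frequency $\abs{\nabla}^{s-2}$ is smoothing of order $2-s$, so use HLS as in Step 1 when $s<2$. When $s\ge2$, instead write $\abs{\nabla}^{s-2}(fg)$ with a nonnegative-order operator and use the algebra property.
\item At high frequency $\abs{\nabla}^{s-2}\lesssim\langle\nabla\rangle^{s-2}$, so
\[
\norm[L^2]{P_{\ge1}\abs{\nabla}^sV}\lesssim\norm[H^{s-2}]{fg}.
\]
Bound this by $\norm[H^{s}]{fg}$, or by $\norm[L^2]{fg}\lesssim\norm[H^{s}]{f}\norm[H^{s}]{g}$ if $s\le2$. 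The product estimate in $H^{s}$ holds for $s>3/2$; for $\tfrac12<s\le 3/2$, use the Sobolev product rule $H^s\cdot H^s\subset H^{2s-3/2}$ together with the two-derivative gain.
\end{itemize}

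Pair the high-frequency part with $\norm[L^\infty]{h}$ when $s>3/2$. Otherwise pair it with the exponent choice $(p,q)$ adapted so that $H^s\hookrightarrow L^q$.

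A cleaner route is interpolation. Once the bound is proved at $s=\tfrac12$ and at the integers $s=1,2,\dots$, multilinear complex interpolation in the scale $H^s$ gives all $s\ge\tfrac12$. At integer $s$ we simply distribute derivatives by the classical Leibniz rule:
\[
\partial^\alpha(Vh)=\sum\binom{\alpha}{\beta}\,\bigl(\abs{x}^{-1}\ast\partial^{\beta}(fg)\bigr)\,\partial^{\alpha-\beta}h .
\]
Each term is bounded using $\norm[L^\infty]{\abs{x}^{-1}\ast(\phi\psi)}\lesssim\norm[H^{1/2}]{\phi}\norm[H^{1/2}]{\psi}$, or HLS into $L^6$ with $\partial\,\abs{x}^{-1}\ast$ paired against $h\in L^3$. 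We then distribute the $\beta$ derivatives on $f,g$ via Leibniz. Each factor carries at most $s$ derivatives, and we use Sobolev embeddings with the spare $1/2$ of regularity.

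The main obstacle is the bookkeeping at the endpoint $s=\tfrac12$. There no $L^\infty$ control of $h$ is available, so the HLS/Kato exponents must be chosen exactly as in Step 1, with no room to spare. All the other cases have slack.
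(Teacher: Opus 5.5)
Your proposal is correct. Note that the paper gives no argument for this lemma: it simply cites Choi--Hong--Seok. So yours is a self-contained substitute rather than a variant of an in-paper proof.

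The core of your argument is sound:
\begin{itemize}
\item Kato's inequality, applied uniformly in the base point, gives $\norm[L^\infty]{V}\lesssim\norm[H^{1/2}]{f}\norm[H^{1/2}]{g}$.
\item The fractional Leibniz rule, in the form that allows $L^\infty$ on the undifferentiated factor, splits $\abs{\nabla}^s(Vh)$.
\item At $s=\tfrac12$ the exponents close exactly: HLS maps $L^{3/2}\to L^6$, and this pairs with $h\in L^3$.
\end{itemize}

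Step~2 is more convoluted than it needs to be, and it contains two loose statements.
\begin{itemize}
\item For the high-frequency piece you derive only an $L^2$ bound. That cannot be paired with $h$ when $s\le\tfrac32$, since then $h\notin L^\infty$; you would need an $L^p$ bound with $p>2$, which you leave unstated.
\item The product rule $H^s\cdot H^s\subset H^{2s-3/2}$ fails at the endpoint $s=\tfrac32$, because $H^{3/2}(\R^3)$ is not an algebra.
\end{itemize}

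Neither point matters, because your Step~1 choice $(p,q)=(6,3)$ works verbatim for all $\tfrac12\le s<2$, with no frequency splitting. HLS gives $\norm[L^6]{\abs{\nabla}^{s-2}(fg)}\lesssim\norm[L^r]{fg}$ with $\tfrac1r=\tfrac{5-2s}{6}$. Then $\norm[L^r]{fg}\le\norm[L^{2r}]{f}\norm[L^{2r}]{g}$, and $H^s\hookrightarrow L^{2r}$ holds precisely because $s\ge\tfrac12$.

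For $s\ge2$ no splitting is needed either. Since $\abs{\xi}^{s-2}\le\langle\xi\rangle^{s-2}$, you get $\norm[L^2]{\abs{\nabla}^{s-2}(fg)}\le\norm[H^s]{fg}\lesssim\norm[H^s]{f}\norm[H^s]{g}$, and this pairs with $h\in L^\infty$.

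Your interpolation alternative is also valid. You treat integer $s$ by the classical Leibniz rule plus Riesz transforms, and then apply Calder\'on's multilinear complex interpolation.

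As for what each approach buys: the paper's citation keeps the exposition short. Your route makes the lemma self-contained and shows exactly where $s\ge\tfrac12$ is used, namely in the $L^\infty$ bound on the Coulomb potential and in the Sobolev embedding at the HLS step.
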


\begin{proof}
   {See \cite{ChoiHongSeok2018a}.}
\end{proof}

\begin{lemma}\label{lem:2.3}
For every \(s\in\R\) and \(f\in H^{s+1}(\R^3,\C^2)\),
\begin{equation}
 \norm[H^{s+1}]{f}
 \lesssim \norm[H^s]{\cD f}
 +\norm[H^s]{f}.
 \label{eq:2.2}
\end{equation}
\end{lemma}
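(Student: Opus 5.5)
The plan is to work on the Fourier side, where $\cD=-\ii\sigma\cdot\nabla$ becomes multiplication by the $2\times2$ Hermitian matrix $\sigma\cdot\xi$. Write $\langle\xi\rangle=(1+\abs{\xi}^2)^{1/2}$. Since $\norm[H^{s+1}]{f}^2=\int\langle\xi\rangle^{2s+2}\abs{\widehat f(\xi)}^2\dd\xi$, it suffices to establish the pointwise inequality
\[
 \langle\xi\rangle^{2}\abs{\widehat f(\xi)}^2\le \abs{(\sigma\cdot\xi)\widehat f(\xi)}^2+\abs{\widehat f(\xi)}^2,
\]
multiply it by $\langle\xi\rangle^{2s}$, and integrate in $\xi$.

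The key algebraic fact is the Pauli anticommutation relation $\sigma_j\sigma_k+\sigma_k\sigma_j=2\delta_{jk}I_2$. For real $\xi$ it gives $(\sigma\cdot\xi)^2=\abs{\xi}^2I_2$. Because $\sigma\cdot\xi$ is Hermitian, for any $v\in\C^2$ we get
\[
 \abs{(\sigma\cdot\xi)v}^2=v^\dagger(\sigma\cdot\xi)^2v=\abs{\xi}^2\abs{v}^2.
\]
The pointwise inequality above therefore holds with equality: $\abs{(\sigma\cdot\xi)\widehat f}^2+\abs{\widehat f}^2=\langle\xi\rangle^2\abs{\widehat f}^2$.

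Multiplying by $\langle\xi\rangle^{2s}$ and integrating yields
\[
 \norm[H^{s+1}]{f}^2=\norm[H^s]{\cD f}^2+\norm[H^s]{f}^2,
\]
and taking square roots gives \eqref{eq:2.2} with constant $1$. This works for every real $s$, including negative $s$, since only the Fourier weight is involved.

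There is no genuine obstacle here. The only point needing care is the convention for the $H^s$ norm: if another equivalent norm is used, the constant changes but the estimate persists. One also checks that $\cD f\in H^s$ is automatic for $f\in H^{s+1}$, so both sides are finite; in fact this identity shows that $\cD^2=-\Delta$ acts componentwise.
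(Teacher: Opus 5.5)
Your proof is correct and is the paper's argument: the Fourier symbol identity \((\sigma\cdot\xi)^2=\abs{\xi}^2I_2\) followed by Plancherel. Your added observation that the pointwise bound is an equality, so \(\norm[H^{s+1}]{f}^2=\norm[H^s]{\cD f}^2+\norm[H^s]{f}^2\) and the constant is \(1\), is a harmless sharpening of the paper's \(\lesssim\).
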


\begin{proof}
Since \((\sigma\cdot\xi)^2=\abs\xi^2I_2\),
\[
 (1+\abs\xi^2)^{s+1}
 \lesssim (1+\abs\xi^2)^s\abs\xi^2+(1+\abs\xi^2)^s.
\]
Plancherel's theorem gives \eqref{eq:2.2}.
\end{proof}

{
\begin{lemma}\label{lem:2.4}
The complete set of minimizers in \eqref{eq:1.7} is
\begin{equation*}
 \cG_\infty
 =\{Q(\cdot-y)\xi:y\in\R^3,\ \xi\in\C^2,\ \abs\xi=1\}
\end{equation*}
which is homeomorphic to $\R^3\times\mathbb S^3$.
\end{lemma}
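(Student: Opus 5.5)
The plan is to reduce the two-component problem to the scalar Choquard--Pekar problem, where Lieb's uniqueness theorem \cite{Lieb1977} applies. Let \(f=(f^1,f^2)\in\cG_\infty\) and set \(\rho:=\abs{f}=\bigl(\abs{f^1}^2+\abs{f^2}^2\bigr)^{1/2}\). By the diamagnetic-type inequality \(\abs{\nabla\abs{f}}\le\abs{\nabla f}\), valid pointwise a.e. for vector-valued \(H^1\) functions, we get \(\rho\in H^1(\R^3)\), \(\norm[L^2]{\rho}=1\), and
\[
\cE_\infty(\rho\,\mathbf e_1)\le\cE_\infty(f).
\]
The Coulomb term depends only on \(\abs{f}^2=\rho^2\), so it is unchanged. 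Hence \(\rho\mathbf e_1\in\cG_\infty\), and
\[
\norm[L^2]{\nabla\rho}=\norm[L^2]{\nabla f}.
\]
Conversely, every \(Q(\cdot-y)\xi\) with \(\abs\xi=1\) has the same energy as \(Q\), so it belongs to \(\cG_\infty\). This gives the inclusion \(\supseteq\).

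For the inclusion \(\subseteq\), first I treat \(\rho\). It is a nonnegative minimizer of the scalar problem, so it satisfies the Euler--Lagrange equation \eqref{eq:1.8} with some multiplier. Elliptic regularity and the strong maximum principle then give \(\rho>0\) everywhere. By Lieb's theorem, \(\rho=Q(\cdot-y)\) for some \(y\in\R^3\). Note that the multiplier is determined by the energy and the Pohozaev identities, so it equals \(\lambda_\infty\).

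Next I treat \(f\) itself. The minimizer \(f\) satisfies \eqref{eq:1.8} componentwise with the same potential:
\[
-\tfrac1{2m}\Delta f^j+\lambda f^j-(\abs{x}^{-1}\ast\rho^2)f^j=0,\qquad j=1,2.
\]
Since \(\rho^2=Q(\cdot-y)^2\), each \(f^j\) solves the linear Schr\"odinger equation
\[
H_y w=0,\qquad H_y:=-\tfrac1{2m}\Delta-\abs{x}^{-1}\ast Q(\cdot-y)^2,
\]
with eigenvalue \(-\lambda\). The function \(Q(\cdot-y)>0\) is an eigenfunction of \(H_y\) with eigenvalue \(-\lambda_\infty\). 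A positive eigenfunction must be the ground state of \(H_y\), because it cannot be orthogonal to the ground state. The ground state of such a Schr\"odinger operator is simple, by the standard Perron--Frobenius argument. I also need \(\lambda=\lambda_\infty\): testing the equation for \(f\) against \(f\) and comparing with the same identity for \(Q\) shows the multipliers agree, since both are determined by the kinetic energy and the Coulomb energy, and these coincide.

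It follows that \(f^j=\xi_jQ(\cdot-y)\) with \(\xi_j\in\C\). The normalization \(\abs{f}=Q(\cdot-y)\) then gives \(\abs\xi=1\). An alternative to the spectral argument is the equality case in the diamagnetic inequality, which forces \(f/\abs f\) to be a constant unit vector; I regard the spectral route as cleaner.

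The map \((y,\xi)\mapsto Q(\cdot-y)\xi\) is a continuous bijection from \(\R^3\times\mathbb S^3\) onto \(\cG_\infty\), where \(\mathbb S^3\) is identified with the unit sphere of \(\C^2\). Its inverse is continuous. Indeed, \(y\) is recovered continuously as the unique maximum point of \(\abs f\), or as the center of mass \(\int x\abs{f}^2\dd x\), which is finite by the exponential decay of \(Q\). Then \(\xi=\ip{Q(\cdot-y)}{f}\) componentwise.

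I expect the main obstacle to be the rigorous identification of the multiplier and the simplicity argument. These are standard, but they require care that \(f^j\in H^1\) are genuine eigenfunctions of the operator \(H_y\).
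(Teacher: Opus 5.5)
Your proof is correct, but after the shared first step it takes a different route from the paper. Both arguments use the diamagnetic inequality to see that $\abs f$ is a minimizer, and both then invoke Lieb's theorem to get $\abs f=Q(\cdot-y)>0$.

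The paper finishes with the equality case you list as an alternative. Writing $f=Q(\cdot-y)\zeta$ with $\abs\zeta=1$, one has $\Re(\zeta^\dagger\partial_j\zeta)=\tfrac12\partial_j\abs\zeta^2=0$. Hence $\abs{\nabla f}^2=\abs{\nabla Q(\cdot-y)}^2+Q(\cdot-y)^2\abs{\nabla\zeta}^2$. Equality of the kinetic energies then forces $\int Q(\cdot-y)^2\abs{\nabla\zeta}^2\,dx=0$, so $\zeta$ is constant because $Q>0$.

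Your route instead goes through several standard steps, all valid, so nothing is missing:
\begin{enumerate}
\item the Euler--Lagrange equation for $f$;
\item the identification $\lambda=\lambda_\infty$, which already uses $\norm[L^2]{\nabla f}=\norm[L^2]{\nabla\abs f}$, the same equality the paper exploits pointwise;
\item $H^2$-regularity, so that $f^j$ lies in the domain of $H_y$;
\item the bound $-\lambda_\infty<0=\inf\sigma_{\mathrm{ess}}(H_y)$;
\item Perron--Frobenius simplicity of the ground state of $H_y$.
\end{enumerate}
The paper's argument is more elementary: it needs no PDE or spectral input beyond positivity of $Q$, and no identification of the multiplier. Yours has the side benefit of showing that every minimizer has multiplier $\lambda_\infty$ and lies, componentwise, in the ground-state eigenspace of the linear operator $H_y$.

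The paper does not argue the homeomorphism claim at all. Your justification of continuity of the inverse is a little loose: neither the location of the maximum of $\abs f$ nor the first moment $\int x\abs f^2\,dx$ depends continuously on $f$ in $H^1$ in general. This is easily repaired. If $Q(\cdot-y_n)\xi_n\to Q(\cdot-y)\xi$ in $L^2$, then $(y_n)$ is bounded, since otherwise the left side converges weakly to $0$ while the right side has norm $1$. Injectivity then forces every limit point of $(y_n,\xi_n)$ to equal $(y,\xi)$.
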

}

\begin{proof}
Let \(f\) be a minimizer.  The Kato inequality gives
\begin{equation*}
 \abs f\in H^1(\R^3,\R),
 \qquad
 \abs{\nabla\abs f}\le \abs{\nabla f}\quad\text{a.e.}
\end{equation*}
Consequently,
\begin{equation}
 e_\infty\le \cE_\infty(\abs f)\le \cE_\infty(f)=e_\infty.
 \label{eq:2.3}
\end{equation}
Then after a translation, \(\abs f=Q>0\).
Write
\begin{equation*}
 f=Q\zeta,
 \qquad \abs\zeta=1.
\end{equation*}
Since \(\Re(\zeta^\dagger\partial_j\zeta)=0\),
\begin{equation*}
 \abs{\nabla f}^2=\abs{\nabla Q}^2+Q^2\abs{\nabla\zeta}^2.
\end{equation*}
\eqref{eq:2.3} implies
\begin{equation*}
 \int_{\R^3}Q^2\abs{\nabla\zeta}^2\,\dd x=0,
\end{equation*}
so \(\zeta\equiv\xi\in\mathbb S^3\).  The converse follows from the
invariance of \(\cE_\infty\).
\end{proof}

An element of \(\cH\) is written as
\begin{equation*}
 \gamma=(y,S,e^{\ii\theta}),
 \qquad
 y\in\R^3,\quad
 S\in\operatorname{Spin}(3),\quad
 e^{i\theta}\in \mathbb S^1.
\end{equation*}
The group law and inverse are
\begin{equation}\label{eq:2.4}
 \begin{aligned}
 &(y,S,e^{\ii\theta})(y',S',e^{\ii\theta'})
 =\bigl(y+R_Sy',SS',e^{\ii(\theta+\theta')}\bigr),\\
 &(y,S,e^{\ii\theta})^{-1}
 =\bigl(-R_S^{-1}y,S^{-1},e^{-\ii\theta}\bigr).
 \end{aligned}
\end{equation}

For \(f:\R^3\to\C^2\),
\begin{equation*}
 (\cT_\gamma f)(x)
 :=e^{\ii\theta}S f\bigl(R_S^{-1}(x-y)\bigr),
\end{equation*}
and the action on pairs is diagonal:
\begin{equation*}
 \cT_\gamma(f,h):=(\cT_\gamma f,\cT_\gamma h).
\end{equation*}
Consequently,
\begin{equation*}
 \begin{aligned}
 \cH\cdot(f_\infty,h_\infty)
 =\Bigl\{&\left(
 Q(\abs{x-y})\xi,
 -\frac{\ii}{2m}Q'(\abs{x-y})
 (\sigma\cdot\widehat{x-y})\xi
 \right):\\
 &y\in\R^3,\ \xi\in\C^2,\ \abs\xi=1\Bigr\}.
 \end{aligned}
\end{equation*}
Its stabilizer is
\begin{equation*}
 \operatorname{Stab}_{\cH}(f_\infty,h_\infty)
 =\left\{
 \bigl(0,e^{-\ii\vartheta\sigma_3/2},e^{\ii\vartheta/2}\bigr):
 \vartheta\in\R
 \right\},
\end{equation*}
and hence
\begin{equation*}
 \dim\bigl(\cH\cdot(f_\infty,h_\infty)\bigr)=6.
\end{equation*}
The real linearization of the limiting
Hartree equation at \(f_\infty\) is
\begin{equation*}
 \begin{aligned}
 \cL:H^2(\R^3,\C^2)&\longrightarrow L^2(\R^3,\C^2),\\
 \cL\varphi
 &=\left(-\frac1{2m}\Delta+\lambda_\infty
 -\abs{x}^{-1}\ast Q^2\right)\varphi
 -2\left(\abs{x}^{-1}\ast \Re(f_\infty^\dagger\varphi)\right)f_\infty.
 \end{aligned}
\end{equation*}
The associated real operators are
\begin{align*}
 \cL_-P
 &:=-\frac1{2m}\Delta P+\lambda_\infty P
   -(\abs{x}^{-1}\ast Q^2)P,
 \\
 \cL_+P
 &:=\cL_-P-2Q\bigl(\abs{x}^{-1}\ast(QP)\bigr).
\end{align*}

Set
\begin{equation}
 \begin{gathered}
 v_j:=\partial_jQ\mathbf e_1\quad(1\le j\le3),
 \qquad
 v_4:=\ii Q\mathbf e_1,
 \qquad
 v_5:=Q\mathbf e_2,
 \qquad
 v_6:=\ii Q\mathbf e_2,\\
 \cK:=\Span_{\R}\{v_1,\ldots,v_6\},
 \qquad
 \cK^\perp
 :=\{f\in L^2(\R^3,\C^2):\ipR{f}{v_j}=0,
 \ 1\le j\le6\}.
 \end{gathered}
 \label{eq:2.5}
\end{equation}

{
\begin{lemma}\label{lem:2.5}
\(\cL\) is self-adjoint and Fredholm of index zero as a real operator,
\begin{equation}
 \Ker_{\R}\cL=\cK=\mathrm T_{f_\infty}\cG_\infty.
 \label{eq:2.6}
\end{equation}
For every \(s\ge2\),
\begin{equation}
 \cL:
 H^s(\R^3,\C^2)\cap\cK^\perp
 \longrightarrow
 H^{s-2}(\R^3,\C^2)\cap\cK^\perp
 \label{eq:2.7}
\end{equation}
is an isomorphism. 
\end{lemma}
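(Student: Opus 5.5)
The plan is to decompose $\varphi=P\mathbf e_1+R\mathbf e_2$ with complex scalar $P,R$, and then each into real and imaginary parts. Since $f_\infty=Q\mathbf e_1$ with $Q$ real, we have $\Re(f_\infty^\dagger\varphi)=Q\,\Re P$. Hence
\[
\cL\varphi=\bigl(\cL_+\Re P+\ii\,\cL_-\Im P\bigr)\mathbf e_1+\bigl(\cL_-\Re R+\ii\,\cL_-\Im R\bigr)\mathbf e_2 ,
\]
so $\cL$ is the direct sum of four real scalar operators: one copy of $\cL_+$ and three copies of $\cL_-$. Self-adjointness follows because $\cL_\pm$ are self-adjoint on $H^2(\R^3,\R)$. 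Indeed, $\abs{x}^{-1}\ast Q^2$ is bounded and decays like $\abs x^{-1}$, so it is $\Delta$-bounded with relative bound zero. The nonlocal term $P\mapsto Q(\abs{x}^{-1}\ast(QP))$ is bounded, symmetric and compact on $L^2$, since $Q$ decays exponentially and the Hardy–Littlewood–Sobolev inequality applies.

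For the Fredholm property, note that $\abs{x}^{-1}\ast Q^2\to0$ at infinity. This potential is therefore a relatively compact perturbation of $-\frac1{2m}\Delta+\lambda_\infty$, which is an isomorphism $H^2\to L^2$ because $\lambda_\infty>0$. Weyl's theorem then gives essential spectrum $[\lambda_\infty,\infty)$. Hence $\cL$ is Fredholm of index zero as a compact perturbation of an isomorphism.

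For the kernel, two known facts are used.

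\emph{$\cL_-$.} Since $\cL_-Q=0$ by \eqref{eq:1.8} and $Q>0$, $Q$ is the ground state of the Schrödinger operator $\cL_-$. By Perron–Frobenius, the lowest eigenvalue is simple, so $\Ker\cL_-=\Span\{Q\}$.

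\emph{$\cL_+$.} Nondegeneracy from \cite{Lenzmann2009} gives $\Ker\cL_+=\Span\{\partial_1Q,\partial_2Q,\partial_3Q\}$.

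Assembling the four blocks, the kernel is spanned by $\partial_jQ\mathbf e_1$, $\ii Q\mathbf e_1$, $Q\mathbf e_2$ and $\ii Q\mathbf e_2$, i.e.\ $\Ker_\R\cL=\cK$. To identify this with $\mathrm T_{f_\infty}\cG_\infty$, use \cref{lem:2.4}: $\cG_\infty$ is the smooth $6$-manifold parametrized by $(y,\xi)\in\R^3\times\mathbb S^3$. Differentiating in $y$ gives $-\partial_jQ\mathbf e_1$. Differentiating in $\xi$ at $\xi=\mathbf e_1$ along $T_{\mathbf e_1}\mathbb S^3=\Span_\R\{\ii\mathbf e_1,\mathbf e_2,\ii\mathbf e_2\}$ gives the other three vectors. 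These six vectors are linearly independent by parity and orthogonality of the components, so the tangent space equals $\cK$.

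For \eqref{eq:2.7}, first observe that $\cK\subset H^s$ for all $s$, since $Q$ is smooth and exponentially decaying. Because $\cL$ is self-adjoint, $\Ran\cL$ is closed (Fredholm) and equals $(\Ker\cL)^\perp=\cK^\perp$ in $L^2$. Thus $\cL\colon H^2\cap\cK^\perp\to L^2\cap\cK^\perp$ is a bijection, and bounded invertibility follows from the open mapping theorem.

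The higher regularity, i.e.\ $s\ge2$ in \eqref{eq:2.7}, goes by elliptic bootstrap. Suppose $\cL\varphi=\psi\in H^{s-2}$ with $\varphi\in H^2$. Then
\[
\left(-\tfrac1{2m}\Delta+\lambda_\infty\right)\varphi=\psi+(\abs{x}^{-1}\ast Q^2)\varphi+2\bigl(\abs{x}^{-1}\ast\Re(f_\infty^\dagger\varphi)\bigr)f_\infty .
\]
The potential $\abs{x}^{-1}\ast Q^2$ has all derivatives bounded, since $\nabla(\abs{x}^{-1}\ast Q^2)=\abs{x}^{-1}\ast\nabla Q^2$ and $Q$ is smooth. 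The nonlocal term is smoothing because $f_\infty\in H^\infty$. So the right side lies in $H^{\min(s-2,\,t)}$ whenever $\varphi\in H^t$, and iterating upgrades $\varphi$ to $H^s$ with the corresponding bound. Orthogonality to $\cK$ is preserved, since it is the same $L^2$ condition.

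The main obstacle is the nondegeneracy $\Ker\cL_+=\Span\{\partial_jQ\}$. It is deep and relies on Lenzmann's spherical-harmonic decomposition argument. I would simply invoke \cite{Lenzmann2009}, noting that $\cL_+$ with $1/(2m)$ in front of the Laplacian reduces to his normalization by scaling.
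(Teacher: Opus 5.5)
Your proposal is correct and follows the paper's proof essentially step for step. It uses the same splitting of $\cL$ into one copy of $\cL_+$ and three copies of $\cL_-$, Lenzmann's nondegeneracy for $\Ker\cL_+$, differentiation of $(y,\xi)\mapsto Q(\cdot-y)\xi$ to get the tangent space, a compact perturbation of $-\frac1{2m}\Delta+\lambda_\infty$ for the Fredholm property with $\Ran\cL=\cK^\perp$ by self-adjointness, and elliptic bootstrap for $s\ge2$. The only difference is that you obtain $\Ker\cL_-=\Span_{\R}\{Q\}$ from Perron--Frobenius (using $Q>0$ and $0<\lambda_\infty$) instead of citing it, which makes that step slightly more self-contained.
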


}

\begin{proof}
Write
\begin{equation*}
 \varphi=(\Phi_1+\ii\Phi_2,\Phi_3+\ii\Phi_4)^T,
 \qquad \Phi_1,\ldots,\Phi_4:\R^3\to\R.
\end{equation*}
Then
\begin{equation*}
 \cL\varphi
 =\bigl(\cL_+\Phi_1+\ii\cL_-\Phi_2,
         \cL_-\Phi_3+\ii\cL_-\Phi_4\bigr)^T.
\end{equation*}
Nondegeneracy of $\cL_+$ gives
\begin{equation*}
 \Ker\cL_+=\Span_{\R}\{\partial_1Q,\partial_2Q,\partial_3Q\},
 \qquad
 \Ker\cL_-=\Span_{\R}\{Q\};
\end{equation*}
see \cite{Lenzmann2009}. Hence
\begin{equation*}
 \Ker_{\R}\cL
 =\Span_{\R}\{\partial_1Q\mathbf e_1,
 \partial_2Q\mathbf e_1,\partial_3Q\mathbf e_1,
 \ii Q\mathbf e_1,Q\mathbf e_2,\ii Q\mathbf e_2\}
 =\cK.
\end{equation*}
Since
\begin{equation*}
 \cG_\infty
 =\{Q(\cdot-y)\xi:y\in\R^3,\ \xi\in\C^2,\ \abs{\xi}=1\}
\end{equation*}
and
\begin{equation*}
 \mathrm T_{\mathbf e_1}\mathbb S^3
 =\Span_{\R}\{\ii\mathbf e_1,\mathbf e_2,\ii\mathbf e_2\},
\end{equation*}
differentiation at \((y,\xi)=(0,\mathbf e_1)\) gives
\begin{equation*}
 \mathrm T_{f_\infty}\cG_\infty=\cK.
\end{equation*}
This proves \eqref{eq:2.6}.

The operator \(\cL_-\) and \(\cL_+\) are self-adjoint on
\(L^2(\R^3,\R)\) with domain \(H^2(\R^3,\R)\).  As bounded operators
from \(H^2(\R^3,\R)\) to \(L^2(\R^3,\R)\), they are compact
perturbations of the isomorphism
\begin{equation*}
 -\frac1{2m}\Delta+\lambda_\infty:
 H^2(\R^3,\R)\longrightarrow L^2(\R^3,\R).
\end{equation*}
Thus
\begin{equation}
 \operatorname{ind}\cL_-=\operatorname{ind}\cL_+=0,
 \label{eq:2.8}
\end{equation}
 its range is closed and
\begin{equation}
 \Ran\cL=(\Ker_{\R}\cL)^\perp=\cK^\perp.
 \label{eq:2.9}
\end{equation}
For $r\in H^{s-2}\cap\cK^\perp$, $s\ge2$, let
$\varphi\in H^2\cap\cK^\perp$ solve $\cL\varphi=r$. Then
\[
 \left(-\frac1{2m}\Delta+\lambda_\infty\right)\varphi
 =r+(\abs{x}^{-1}\ast Q^2)\varphi
   +2\bigl(\abs{x}^{-1}\ast\Re(f_\infty^\dagger\varphi)\bigr)f_\infty.
\]
The fixed coefficients are smooth. Lemma~\ref{lem:2.2} and
elliptic bootstrapping give
\[
 \varphi\in H^s\cap\cK^\perp,
 \qquad
 \norm[H^s]{\varphi}
 \le C_s\bigl(\norm[H^{s-2}]{r}+\norm[L^2]{\varphi}\bigr)
 \le C_s\norm[H^{s-2}]{r}.
\]
Together with \eqref{eq:2.6}, this proves \eqref{eq:2.7}.

\end{proof}
{
\begin{lemma}\label{lem:2.6}
For every \(s\ge2\), define
\begin{equation*}
 \cL_{\mathrm{mf}}(\varphi,\mu)
 :=\left(\cL\varphi+\mu f_\infty,\ 2\ipR{f_\infty}{\varphi}\right).
\end{equation*}
Then
\begin{equation*}
 \cL_{\mathrm{mf}}:
 \bigl(H^s(\R^3,\C^2)\cap\cK^\perp\bigr)\times\R
 \longrightarrow
 \bigl(H^{s-2}(\R^3,\C^2)\cap\cK^\perp\bigr)\times\R
\end{equation*}
is an isomorphism. 
\end{lemma}
}

\begin{proof}
Set
\begin{equation*}
 \cR
 :=\left(
 \restr{\cL}{H^2(\R^3,\C^2)\cap\cK^\perp}
 \right)^{-1}.
\end{equation*}
By \cref{lem:2.5}, the inverse is bounded from
\(H^{s-2}(\R^3,\C^2)\cap\cK^\perp\) to
\(H^s(\R^3,\C^2)\cap\cK^\perp\) for every \(s\ge2\).
Since \(f_\infty\in\cK^\perp\), it remains to compute
\(\ipR{f_\infty}{\cR f_\infty}\).  For \(\lambda>0\), set
\begin{equation*}
 Q_\lambda(x)
 :=\frac{\lambda}{\lambda_\infty}
 Q\!\left(\sqrt{\frac{\lambda}{\lambda_\infty}}\,x\right).
\end{equation*}
Then
\[
 -\frac1{2m}\Delta Q_\lambda+\lambda Q_\lambda
 -(\abs{x}^{-1}\ast Q_\lambda^2)Q_\lambda=0,
 \qquad
 \norm[L^2]{Q_\lambda}^2
 =\left(\frac{\lambda}{\lambda_\infty}\right)^{1/2}.
\]
Differentiation at \(\lambda=\lambda_\infty\) gives
\begin{equation*}
 \cL_+\left(\partial_\lambda Q_\lambda
 \big|_{\lambda=\lambda_\infty}\right)=-Q.
\end{equation*}
Hence
\begin{equation}
 \begin{aligned}
 d:=\ipR{f_\infty}{\cR f_\infty}
 &=\ip{Q}{\cL_+^{-1}Q}\\
 &=-\ip{Q}{\partial_\lambda Q_\lambda}
       \big|_{\lambda=\lambda_\infty}\\
 &=-\frac12\frac{\dd}{\dd\lambda}
   \norm[L^2]{Q_\lambda}^2
       \bigg|_{\lambda=\lambda_\infty}
 =-\frac1{4\lambda_\infty}\ne0.
 \end{aligned}
 \label{eq:2.10}
\end{equation}
For
\((r,\ell)\in(H^{s-2}(\R^3,\C^2)\cap\cK^\perp)\times\R\), set
\begin{equation*}
 \mu:=\frac{2\ipR{f_\infty}{\cR r}-\ell}{2d},
 \qquad
 \varphi:=\cR(r-\mu f_\infty).
\end{equation*}
Then
\[
 \cL_{\mathrm{mf}}(\varphi,\mu)=(r,\ell),
\]
 The same formula proves
injectivity.
\end{proof}

By \eqref{eq:2.5}, we have 
\begin{equation*}
 \ipR{f_\infty}{v_j}=0,
 \qquad 1\le j\le6.
\end{equation*}
Define the fully augmented operator
\begin{equation*}
 \cL_{\mathrm{aug}}(\varphi,\mu,\bm b)
 :=\left(
 \cL\varphi+\mu f_\infty+\sum_{j=1}^6b_jv_j,
 \ 2\ipR{f_\infty}{\varphi},
 \ (\ipR{\varphi}{v_j})_{j=1}^6
 \right).
\end{equation*}

{
\begin{lemma}\label{lem:2.7}
For every \(s\ge2\),
\begin{equation*}
 \cL_{\mathrm{aug}}:
 H^s(\R^3,\C^2)\times\R\times\R^6
 \longrightarrow
 H^{s-2}(\R^3,\C^2)\times\R\times\R^6
\end{equation*}
is a bounded isomorphism.
\end{lemma}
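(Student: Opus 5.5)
We reduce \cref{lem:2.7} to \cref{lem:2.6} by decomposing along \(\cK\). Boundedness is clear: \(\cL\) maps \(H^s\) to \(H^{s-2}\) boundedly. This follows from \cref{lem:2.2}, since \(Q\) is smooth and decays, so the Hartree terms are bounded, even smoothing. The finitely many functionals \(\varphi\mapsto\ipR{f_\infty}{\varphi}\) and \(\varphi\mapsto\ipR{\varphi}{v_j}\) are bounded on \(H^s\) because \(f_\infty,v_j\in L^2\). The additions \(\mu f_\infty\) and \(b_jv_j\) land in \(H^{s-2}\) because \(Q\in\bigcap_s H^s\). By the open mapping theorem it therefore suffices to show bijectivity.

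We solve \(\cL_{\mathrm{aug}}(\varphi,\mu,\bm b)=(r,\ell,\bm a)\) explicitly. First, the \(v_j\) are pairwise real-orthogonal: \(\partial_jQ\) are mutually orthogonal and orthogonal to \(Q\) by radial symmetry, and the components in \(\mathbf e_1,\mathbf e_2\), real and imaginary, separate. Each \(v_j\) is also orthogonal to \(f_\infty\). Let \(\Pi\) denote the real \(L^2\)-orthogonal projection onto \(\cK\); it preserves every \(H^t\). Write \(\varphi=\varphi^\perp+\sum_j c_jv_j\) with \(\varphi^\perp\in H^s\cap\cK^\perp\). The last block of equations then forces \(c_j=a_j/\norm[L^2]{v_j}^2\).

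Since \(\cL v_j=0\) by \cref{lem:2.5}, the first equation becomes \(\cL\varphi^\perp+\mu f_\infty+\sum b_jv_j=r\). The operator \(\cL\) is self-adjoint and \(\Ker\cL=\cK\), so \(\ipR{\cL\varphi^\perp}{v_j}=\ipR{\varphi^\perp}{\cL v_j}=0\); this uses \(v_j\in H^2\). We also have \(f_\infty\perp\cK\). Pairing with \(v_j\) therefore gives \(b_j=\ipR{r}{v_j}/\norm[L^2]{v_j}^2\), that is, \(\sum b_jv_j=\Pi r\).

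What remains is \(\cL\varphi^\perp+\mu f_\infty=(I-\Pi)r\in H^{s-2}\cap\cK^\perp\), together with \(2\ipR{f_\infty}{\varphi^\perp}=\ell\), where \(\ipR{f_\infty}{v_j}=0\) was used. This is exactly \(\cL_{\mathrm{mf}}(\varphi^\perp,\mu)=((I-\Pi)r,\ell)\), which has a unique solution with bounds by \cref{lem:2.6}. Each step is forced, so uniqueness holds as well, and we obtain injectivity and surjectivity, with the inverse explicitly bounded.

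The only delicate point is verifying that \(\Pi\) preserves \(H^{s-2}\) for every \(s\ge2\), including \(s-2=0\). This holds because \(\Pi r=\sum\ipR{r}{v_j}v_j/\norm{v_j}^2\) with smooth, decaying \(v_j\), and the pairing is defined for \(r\in L^2\). Everything else is routine bookkeeping.
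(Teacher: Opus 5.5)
Your argument is correct and is essentially the paper's proof: you remove the $\cK$-component of $r$ via the Gram matrix (which, as you note, is actually diagonal because the $v_j$ are mutually real-orthogonal, whereas the paper simply inverts a general positive definite $\Gamma$), solve the reduced system with \cref{lem:2.6}, and add back $\sum_j a_jv_j$ using $\cL v_j=0$ and $f_\infty\perp\cK$. Your pairing step $\ipR{\cL\varphi^\perp}{v_j}=\ipR{\varphi^\perp}{\cL v_j}=0$ just spells out the uniqueness that the paper summarizes as ``the construction is unique.''
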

}

\begin{proof}
Let
\begin{equation}
 \Gamma_{ij}:=\ipR{v_j}{v_i},
 \qquad 1\le i,j\le6.
 \label{eq:2.11}
\end{equation}
The Gram matrix \(\Gamma\) is positive definite.  For
\((r,\ell,\bm d)\in H^{s-2}(\R^3,\C^2)\times\R\times\R^6\), put
\begin{equation*}
 r_i:=\ipR{r}{v_i},
 \qquad
 \bm b:=\Gamma^{-1}(r_1,\ldots,r_6)^T,
 \qquad
 r_\perp:=r-\sum_{j=1}^6b_jv_j.
\end{equation*}
Then \(r_\perp\in\cK^\perp\).  By
\cref{lem:2.6}, there is a unique
\((\varphi_\perp,\mu)\in
\bigl(H^s(\R^3,\C^2)\cap\cK^\perp\bigr)\times\R\) satisfying
\begin{equation*}
 \cL_{\mathrm{mf}}(\varphi_\perp,\mu)=(r_\perp,\ell).
\end{equation*}
Set
\begin{equation*}
 \bm a:=\Gamma^{-1}\bm d,
 \qquad
 \varphi:=\varphi_\perp+\sum_{j=1}^6a_jv_j.
\end{equation*}
Since \(\cL v_j=0\) and \(f_\infty\perp\cK\),
\(\cL_{\mathrm{aug}}(\varphi,\mu,\bm b)=(r,\ell,\bm d)\).
The construction is unique, and
\begin{equation*}
 \norm[H^s]{\varphi}+\abs\mu+\abs{\bm b}
 \lesssim \norm[H^{s-2}]{r}+\abs\ell+\abs{\bm d}.
\end{equation*}
\end{proof}

For finite \(c>0\), define
\begin{equation}
 \cF_c(f,h,\lambda)
 :=\begin{pmatrix}
  \cD h+
  \left\{\lambda-\abs{x}^{-1}\ast
  \left(\abs f^2+c^{-2}\abs h^2\right)\right\}f\\[0.4em]
  \cD f-2mh+c^{-2}
  \left\{\lambda-\abs{x}^{-1}\ast
  \left(\abs f^2+c^{-2}\abs h^2\right)\right\}h\\[0.4em]
  \norm[L^2]{f}^2+c^{-2}\norm[L^2]{h}^2-1
 \end{pmatrix},
 \label{eq:2.12}
\end{equation}
and
\begin{equation*}
 \cF_\infty(f,h,\lambda)
 :=\begin{pmatrix}
  \cD h+
  \left(\lambda-\abs{x}^{-1}\ast \abs f^2\right)f\\[0.4em]
  \cD f-2mh\\[0.4em]
  \norm[L^2]{f}^2-1
 \end{pmatrix}.
\end{equation*}
For
\begin{equation*}
 \ell_j(f):=\ipR{f-f_\infty}{v_j},
 \qquad 1\le j\le6,
\end{equation*}
and \(\bm a=(a_1,\ldots,a_6)\in\R^6\), put
\begin{equation}
 \widehat{\cF}_c(f,h,\lambda,\bm a)
 :=\left(
 \cF_c(f,h,\lambda)
 +\begin{pmatrix}
   \displaystyle\sum_{j=1}^6a_jv_j\\[0.2em]0\\[0.2em]0
  \end{pmatrix},
 \ (\ell_j(f))_{j=1}^6
 \right).
 \label{eq:2.13}
\end{equation}
The map \(\widehat{\cF}_\infty\) is obtained from \eqref{eq:2.13} by
setting \(c^{-2}=0\).

{
\begin{lemma}\label{lem:2.8}
For every \(s\ge\tfrac12\) and every \(c>c_0\),
\begin{equation*}
 \cF_c:X_s\longrightarrow X_{s-1},
 \qquad
 \widehat{\cF}_c:X_s\times\R^6
 \longrightarrow X_{s-1}\times\R^6
\end{equation*}
are \(C^\infty\) real maps.  For every \(R>0\),
\begin{align}
 &\sup_{\norm[X_s\times\R^6]{z-z_\infty}\le R}
 \norm[X_{s-1}\times\R^6]{
 \widehat{\cF}_c(z)-\widehat{\cF}_\infty(z)}
 \lesssim c^{-2},
 \label{eq:2.14}\\
 &\sup_{\norm[X_s\times\R^6]{z-z_\infty}\le R}
 \norm[\cB(X_s\times\R^6,X_{s-1}\times\R^6)]{
 D_z\widehat{\cF}_c(z)-D_z\widehat{\cF}_\infty(z)}
 \lesssim c^{-2}.
 \label{eq:2.15}
\end{align}
\end{lemma}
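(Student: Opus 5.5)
The plan is to read $\widehat{\cF}_c$ as a polynomial map in the variables $(f,h,\lambda,\bm a)$ and in the parameter $\varepsilon:=c^{-2}$, and to get all three claims from the multilinear estimates of Lemma~\ref{lem:2.2}. Expanding \eqref{eq:2.12}, every component of $\cF_c$ is a sum of three kinds of terms:
\begin{enumerate}
\item bounded linear terms $\cD h$, $\cD f$, $-2mh$, $\lambda f$, $\varepsilon\lambda h$, together with $\sum_j a_jv_j$;
\item cubic nonlocal terms $\bigl(\abs{x}^{-1}\ast(\abs f^2+\varepsilon\abs h^2)\bigr)f$ and $\varepsilon\bigl(\abs{x}^{-1}\ast(\abs f^2+\varepsilon\abs h^2)\bigr)h$;
\item scalar quadratic terms $\norm[L^2]{f}^2+\varepsilon\norm[L^2]{h}^2-1$ and $\ell_j(f)$.
\end{enumerate}
Thus $\widehat{\cF}_c(z)=\widehat{\cF}_\infty(z)+\varepsilon\,\cA_1(z)+\varepsilon^2\cA_2(z)+\varepsilon^3\cA_3(z)$, where each $\cA_k$ is a polynomial of degree at most three in $z$ with coefficients independent of $c$. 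Once each monomial is shown to be a bounded multilinear map $X_s\times\R^6\to X_{s-1}\times\R^6$, smoothness follows, since bounded multilinear maps are $C^\infty$ as real maps. The two estimates \eqref{eq:2.14}--\eqref{eq:2.15} then come from bounding $\cA_k$ and $D_z\cA_k$ on bounded sets, with $\varepsilon\le c_0^{-2}$ controlling the higher powers of $\varepsilon$.

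Boundedness of the monomials would go term by term:
\begin{enumerate}
\item $\cD:H^s\to H^{s-1}$ is bounded, and multiplication by the real scalar $\lambda$ is trivially bounded. The $v_j$ lie in every $H^s$ because $Q$ is smooth and decays exponentially.
\item For the cubic terms, Lemma~\ref{lem:2.2} gives $\norm[H^s]{(\abs{x}^{-1}\ast(fg))h}\lesssim\norm[H^s]{f}\norm[H^s]{g}\norm[H^s]{h}$ for $s\ge\tfrac12$. This bounds the trilinear map $H^s\times H^s\times H^s\to H^s\hookrightarrow H^{s-1}$, applied componentwise to the $\C^2$-valued entries, with products such as $f^\dagger f$ written through real and imaginary parts.
\item The scalar maps $f\mapsto\norm[L^2]{f}^2$ and $\ell_j$ are continuous quadratic and affine maps on $H^s\subset L^2$.
\end{enumerate}
So $\widehat{\cF}_c$ is a finite sum of bounded constant, linear, bilinear and trilinear maps, and is therefore $C^\infty$. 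The same reasoning covers $\widehat{\cF}_\infty$.

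For \eqref{eq:2.14}, the difference $\widehat{\cF}_c-\widehat{\cF}_\infty$ has the explicit form
\begin{align*}
&\Bigl(-\varepsilon\bigl(\abs{x}^{-1}\ast\abs h^2\bigr)f,\ \ \varepsilon\lambda h-\varepsilon\bigl(\abs{x}^{-1}\ast(\abs f^2+\varepsilon\abs h^2)\bigr)h,\ \ \varepsilon\norm[L^2]{h}^2,\ \ 0\Bigr).
\end{align*}
On the ball $\norm{z-z_\infty}\le R$ we have $\norm{z}\le R+\norm{z_\infty}$, and $z_\infty$ is a fixed element of every $X_s\times\R^6$ because $Q\in H^\infty$. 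The bounds from the previous paragraph then give a constant $C(R,s)$, with $\varepsilon^2\le c_0^{-2}\varepsilon$ absorbing the higher powers. For \eqref{eq:2.15}, I would differentiate these explicit polynomial terms: the derivative of a trilinear term is a sum of trilinear expressions with one slot replaced by the direction, so the same estimate holds uniformly on the ball. The implicit constants depend on $R$ and $s$ but not on $c>c_0$.

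There is one thing to check. Each component of $\widehat{\cF}_c$ is only claimed to land in $H^{s-1}$, so the main loss comes from $\cD$, and Lemma~\ref{lem:2.2} is available only for $s\ge\tfrac12$. This is exactly the hypothesis of the lemma, so no real obstacle remains. The only delicate point is remembering that these are \emph{real}, not complex, smooth maps, because $\abs f^2$ is not complex-analytic. This is handled by regarding the spaces as real Banach spaces, as in the notation section.
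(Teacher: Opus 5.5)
Your proposal is correct and follows essentially the same route as the paper: you treat $\widehat{\cF}_c$ as a polynomial in $z$ and $\varepsilon=c^{-2}$, bound the cubic Coulomb terms with Lemma~\ref{lem:2.2}, and read off \eqref{eq:2.14}--\eqref{eq:2.15} from the explicit $O(c^{-2})$ difference $\cF_c-\cF_\infty$ and its derivative on bounded sets. Two harmless slips: the $\varepsilon^3\cA_3$ term you allow for is actually zero, since only powers up to $\varepsilon^2$ occur, and $\lambda f$, $\varepsilon\lambda h$ are bilinear rather than linear; neither affects the argument.
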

}

\begin{proof}
The operator \(\cD\) is bounded from \(H^s\) to \(H^{s-1}\), and
\eqref{eq:2.1} gives
\[
 \norm[H^s]{(\abs{x}^{-1}\ast \Re(f^\dagger g))h}
 \lesssim \norm[H^s]{f}\norm[H^s]{g}\norm[H^s]{h}.
\]
Thus every nonlinear term in \eqref{eq:2.12} and \eqref{eq:2.13} is a
continuous map from \(H^s\times H^s\) to \(H^s\), hence also
to \(H^{s-1}\).  Moreover,
\begin{equation*}
 \cF_c(f,h,\lambda)-\cF_\infty(f,h,\lambda)
 =c^{-2}\begin{pmatrix}
  -(\abs{x}^{-1}\ast \abs h^2)f\\[0.35em]
  \left\{\lambda-\abs{x}^{-1}\ast \abs f^2
  -c^{-2}\abs{x}^{-1}\ast \abs h^2\right\}h\\[0.35em]
  \norm[L^2]{h}^2
 \end{pmatrix}.
\end{equation*}
The same estimates applied to the derivatives prove
\eqref{eq:2.14}-\eqref{eq:2.15}.
\end{proof}

Define
\begin{equation}
 \begin{aligned}
 &\widehat{\cL}_\infty
    :=D_z\widehat{\cF}_\infty(z_\infty),\\
 &\widehat{\cL}_\infty(\varphi,\psi,\mu,\bm b)
 =\begin{pmatrix}
 \begin{aligned}
 \cD\psi
 &+\left(\lambda_\infty-\abs{x}^{-1}\ast Q^2\right)\varphi\\[-0.2em]
 &-2\bigl(\abs{x}^{-1}\ast\Re(f_\infty^\dagger\varphi)\bigr)f_\infty
       +\mu f_\infty+\displaystyle\sum_{j=1}^6b_jv_j
 \end{aligned}\\[0.6em]
 \cD\varphi-2m\psi\\[0.2em]
 2\ipR{f_\infty}{\varphi}\\[0.2em]
 (\ipR{\varphi}{v_j})_{j=1}^6
 \end{pmatrix}.
 \end{aligned}
 \label{eq:2.16}
\end{equation}

\begin{lemma}\label{lem:2.9}
For every $s\ge0$,
\begin{equation}
 \widehat{\cL}_\infty:X_s\times\R^6
       \longrightarrow X_{s-1}\times\R^6
 \label{eq:2.17}
\end{equation}
is an isomorphism. 
For every $s\ge1$, its restriction is an isomorphism
\begin{equation}
 \begin{aligned}
 \widehat{\cL}_\infty:
 &\cW^s_{\mathrm{up}}\times\cW^s_{\mathrm{down}}
                         \times\R\times\{0\}\\
 &\hspace{1em}\longrightarrow
 \cW^{s-1}_{\mathrm{up}}\times\cW^{s-1}_{\mathrm{down}}
                         \times\R\times\{0\}.
 \end{aligned}
 \label{eq:2.19}
\end{equation}
The nonlinear maps $\widehat{\cF}_c$, including $c=\infty$,
map the first space in \eqref{eq:2.19} into the second.
\end{lemma}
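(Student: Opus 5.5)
The plan is to reduce $\widehat{\cL}_\infty$ to the fully augmented operator $\cL_{\mathrm{aug}}$ of \cref{lem:2.7} by eliminating the lower component $\psi$ through the second equation.

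\textbf{Reduction to \cref{lem:2.7}.} Given data $(r_1,r_2,\ell,\bm d)\in X_{s-1}\times\R^6$, the equation $\cD\varphi-2m\psi=r_2$ forces
\[
 \psi=\frac1{2m}(\cD\varphi-r_2).
\]
Substituting into the first row, and using $\cD^2=-\Delta$, gives
\[
 \cL\varphi+\mu f_\infty+\sum_{j=1}^6 b_jv_j=r_1+\frac1{2m}\cD r_2,
\]
together with $2\ipR{f_\infty}{\varphi}=\ell$ and $(\ipR{\varphi}{v_j})_j=\bm d$. So the system is exactly $\cL_{\mathrm{aug}}(\varphi,\mu,\bm b)=(r_1+\tfrac1{2m}\cD r_2,\ell,\bm d)$.

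\textbf{Regularity bookkeeping and the main obstacle.} The right-hand side lies in $H^{s-2}$, so \cref{lem:2.7} would give $\varphi\in H^s$, provided $s\ge2$. This is the delicate point: the statement is claimed for all $s\ge0$, while \cref{lem:2.7} was only proved for $s\ge2$.
\begin{itemize}
 \item For small $s$, I would extend \cref{lem:2.7} to all $s\in\R$. The operator $\cL_{\mathrm{aug}}$ is $-\frac1{2m}\Delta+\lambda_\infty$ plus smoothing, finite-rank perturbations, since $Q$ is smooth and decaying and the convolution terms are compact $H^{s}\to H^{s-2}$ for any $s$. It is therefore Fredholm of index zero on every scale. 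Its kernel consists of smooth functions by bootstrapping and is trivial by the $s=2$ case, so it is an isomorphism for all $s$.
 \item With that extension, $\varphi\in H^s$, and then $\psi=\frac1{2m}(\cD\varphi-r_2)\in H^{s-1}$.
 \item This only gives $\psi\in H^{s-1}$, one derivative short of $X_s$. I expect this mismatch to be the main issue. I would check whether $X_s\to X_{s-1}$ should be read with $\psi$ measured one step lower, or whether the data convention makes it consistent. Otherwise I would state the isomorphism between the appropriate anisotropic spaces $H^s\times H^{s-1}\times\R\times\R^6\to H^{s-2}\times H^{s-1}\times\R\times\R^6$, which is what the recursion actually uses.
\end{itemize}
Injectivity and the bound $\norm{\varphi}+\norm{\psi}+|\mu|+|\bm b|\lesssim$ data follow from the same reduction.

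\textbf{Restriction to $\cW_{\mathrm{up}}\times\cW_{\mathrm{down}}$.} Everything here is equivariant under the stabilizer structure, so the plan is a direct check on each map.
\begin{itemize}
 \item Compute $\cD$ on the two ansätze. For $F(|x|)\mathbf e_1$ one has $\cD(F\mathbf e_1)=-\ii F'(|x|)(\sigma\cdot\widehat x)\mathbf e_1\in\cW_{\mathrm{down}}$. For the lower ansatz, $(\sigma\cdot\widehat x)^2=1$ together with the standard identity $\cD\bigl(-\ii G(\sigma\cdot\widehat x)\mathbf e_1\bigr)=-(G'+2G/r)\mathbf e_1$ shows $\cD$ maps $\cW_{\mathrm{down}}\to\cW_{\mathrm{up}}$.
 \item The densities $|f|^2$ and $|h|^2$ are radial, since $|(\sigma\cdot\widehat x)\mathbf e_1|=1$, so the convolution potentials are radial. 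The term $\Re(f_\infty^\dagger\varphi)=QF$ is radial as well.
 \item The projection $\sum b_jv_j$ must vanish on this subspace: $\ipR{F\mathbf e_1}{v_j}=0$ for $j=1,\dots,6$ by oddness for $\partial_jQ$, by realness for $\ii Q\mathbf e_1$ and $\ii Q\mathbf e_2$, and by orthogonality of $\mathbf e_1,\mathbf e_2$ for $Q\mathbf e_2$. This is consistent with the zero last component.
 \item Hence both $\widehat{\cF}_c$ and $\widehat{\cL}_\infty$ preserve the subspaces.
\end{itemize}
Surjectivity of the restriction follows by uniqueness. The symmetry group $\{\text{rotations }(0,S,1)\text{ composed with conjugation-type maps fixing }\mathbf e_1\text{ direction}\}$ commutes with $\widehat{\cL}_\infty$, so the unique preimage of invariant data is invariant. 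Concretely, I would use the maps $\cT_\gamma$ with $\gamma$ in the stabilizer $(0,e^{-\ii\vartheta\sigma_3/2},e^{\ii\vartheta/2})$, together with rotations about axes combined with the real structure, to characterize $\cW_{\mathrm{up}}\times\cW_{\mathrm{down}}$ as the fixed set. Verifying this fixed-set characterization is the second nontrivial point.
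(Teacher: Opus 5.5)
Your elimination $\psi=\frac1{2m}(\cD\varphi-r_2)$, which reduces the system to $\cL_{\mathrm{aug}}(\varphi,\mu,\bm b)=(r_1+\frac1{2m}\cD r_2,\ell,\bm d)$, is exactly the paper's. Extending \cref{lem:2.7} to $0\le s<2$ by a Fredholm-index-zero argument is a legitimate substitute for the paper's route, which proves the case $s=2$, dualizes using $\widehat{\cL}_\infty^*=\diag(I_2,I_2,\tfrac12,I_6)\,\widehat{\cL}_\infty\,\diag(I_2,I_2,2,I_6)$, and then interpolates.

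\textbf{First gap: the ``one derivative short'' step.} This is not a defect of the statement. By retreating to anisotropic spaces you have not proved the lemma as stated. The missing step is to read off $\cD\psi$ from the \emph{first} row once $\varphi\in H^s$ is known:
\[
 \cD\psi=r_1-\bigl(\lambda_\infty-\abs{x}^{-1}\ast Q^2\bigr)\varphi
 +2\bigl(\abs{x}^{-1}\ast\Re(f_\infty^\dagger\varphi)\bigr)f_\infty
 -\mu f_\infty-\sum_{j=1}^6b_jv_j\in H^{s-1}.
\]
You also have $\psi\in H^{s-1}$. The Plancherel argument of \cref{lem:2.3} then gives $\psi\in H^s$ with $\norm[H^s]{\psi}\lesssim\norm[H^{s-1}]{\psi}+\norm[H^{s-1}]{\cD\psi}$, and hence the full $X_s$ bound. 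This is precisely how the paper closes the case $s\ge2$, and the same step works at every $s$ in your Fredholm version.

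\textbf{Second gap: invariance of the inverse on $\cW_{\mathrm{up}}\times\cW_{\mathrm{down}}$.} The fixed-set characterization you defer is false for the symmetries you list.
\begin{enumerate}
 \item Because of the nonlocal term $2\bigl(\abs{x}^{-1}\ast\Re(f_\infty^\dagger\varphi)\bigr)f_\infty$, a spinorial rotation $\cT_\gamma$ commutes with $\widehat{\cL}_\infty$ only if $\cT_\gamma f_\infty=\pm f_\infty$. Essentially this leaves only the one-parameter stabilizer, i.e.\ rotations about the $x_3$-axis.
 \item The fixed set of that stabilizer consists of $x_3$-axisymmetric configurations. Discrete parity or real-structure symmetries do not upgrade axial symmetry to radial symmetry. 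For example, $(F(x)\mathbf e_1,0,0,0)$ with $F$ real, axisymmetric and even in $x_3$ is fixed by all of them but is not in $\cW_{\mathrm{up}}$.
 \item Plain scalar rotations commute with $\cL$ but not with $\cD$.
\end{enumerate}

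This is why the paper argues only after $\psi$ has been eliminated.
\begin{enumerate}
 \item For data in $\cW_{\mathrm{up}}\times\cW_{\mathrm{down}}\times\R\times\{0\}$, the reduced datum $r_1+\frac1{2m}\cD r_2$ is real, radial and along $\mathbf e_1$, so it lies in $\cK^\perp$.
 \item Since $\Ran\cL=\cK^\perp\ni f_\infty$, this forces $\sum_jb_jv_j\in\cK\cap\cK^\perp$, so $\bm b=0$.
 \item Write $\varphi=(\Phi_1+\ii\Phi_2,\Phi_3+\ii\Phi_4)^T$. The components $\Phi_2,\Phi_3,\Phi_4$ lie in $\Ker\cL_-=\Span_\R\{Q\}$ and are orthogonal to $Q$ because $\bm d=0$, so they vanish.
 \item Since $\cL_+$ commutes with $\Phi\mapsto\Phi\circ O^{-1}$ for $O\in SO(3)$, one gets $\Phi_1\circ O^{-1}-\Phi_1\in\Ker\cL_+\cap\Span_\R\{\partial_1Q,\partial_2Q,\partial_3Q\}^\perp=\{0\}$. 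Hence $\Phi_1$ is radial.
 \item Then $\psi=\frac1{2m}(\cD\varphi-r_2)\in\cW_{\mathrm{down}}$ by \eqref{eq:2.20}.
\end{enumerate}

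Your forward-invariance checks for $\widehat{\cL}_\infty$ and $\widehat{\cF}_c$ are fine.
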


\begin{proof}
Let $s\ge2$ and $(r_1,r_2,\ell,\bm d)\in X_{s-1}\times\R^6$.
By Lemma~\ref{lem:2.7}, there is a unique
$(\varphi,\mu,\bm b)\in H^s(\R^3,\C^2)\times\R\times\R^6$ such that
\[
 \begin{gathered}
 \cL_{\mathrm{aug}}(\varphi,\mu,\bm b)
   =\left(r_1+\frac1{2m}\cD r_2,\ell,\bm d\right),\\
 \norm[H^s]{\varphi}+|\mu|+|\bm b|
 \le C_s\bigl(\norm[H^{s-1}]{r_1}
       +\norm[H^{s-1}]{r_2}+|\ell|+|\bm d|\bigr).
 \end{gathered}
\]
Set $\psi=(2m)^{-1}(\cD\varphi-r_2)$. Then
\[
 \begin{aligned}
 \cD\psi={}&r_1-
   (\lambda_\infty-\abs{x}^{-1}\ast Q^2)\varphi
   +2\bigl(\abs{x}^{-1}\ast\Re(f_\infty^\dagger\varphi)\bigr)f_\infty\\
 &-\mu f_\infty-\sum_{j=1}^6b_jv_j,\\
 \norm[H^s]{\psi}
 &\le C_s\bigl(\norm[H^{s-1}]{\psi}
                    +\norm[H^{s-1}]{\cD\psi}\bigr)\\
 &\le C_s\bigl(\norm[H^{s-1}]{r_1}
       +\norm[H^{s-1}]{r_2}+|\ell|+|\bm d|\bigr).
 \end{aligned}
\]
Thus \eqref{eq:2.17} hold for $s\ge2$.

For the real distributional adjoint,
\[
 \widehat{\cL}_\infty^*
 =\diag(I_2,I_2,\tfrac12,I_6)\,
       \widehat{\cL}_\infty\,
       \diag(I_2,I_2,2,I_6),
 \qquad
 (X_t\times\R^6)^*=X_{-t}\times\R^6.
\]
Duality at $s=2$ therefore gives the isomorphism
$\widehat{\cL}_\infty:X_{-1}\times\R^6\to X_{-2}\times\R^6$.
The inverse agrees with the preceding inverses on smooth data.
Interpolation, using
\[
 [X_{-1},X_2]_\theta=X_{3\theta-1},
 \qquad
 [X_{-2},X_1]_\theta=X_{3\theta-2},
\]
proves \eqref{eq:2.17} also for $0\le s<2$.

For real radial $F,G$, direct computation gives
\begin{equation}
 \begin{aligned}
 \cD(F(r)\mathbf e_1)
 &=-\ii F'(r)(\sigma\cdot\widehat x)\mathbf e_1,\\
 \cD\bigl(-\ii G(r)(\sigma\cdot\widehat x)\mathbf e_1\bigr)
 &=-\left(G'(r)+\frac{2G(r)}r\right)\mathbf e_1,\\
 \cW^0_{\mathrm{up}}&\subset\cK^\perp.
 \end{aligned}
 \label{eq:2.20}
\end{equation}
Moreover, $|F(r)\mathbf e_1|^2=F(r)^2$ and
$|-\ii G(r)(\sigma\cdot\widehat x)\mathbf e_1|^2=G(r)^2$.
Thus radial Coulomb convolution and \eqref{eq:2.20} give the
asserted invariance of $\widehat{\cF}_c$ and
$\widehat{\cL}_\infty$.

It remains to prove invariance of the inverse. First let $s\ge2$ and
\[
 (r_1,r_2,\ell,0)\in
 \cW^{s-1}_{\mathrm{up}}\times\cW^{s-1}_{\mathrm{down}}
                         \times\R\times\{0\}.
\]
For $(\varphi,\psi,\mu,\bm b)
 =\widehat{\cL}_\infty^{-1}(r_1,r_2,\ell,0)$,
\[
 \begin{gathered}
 \cL\varphi+\mu f_\infty+\sum_{j=1}^6b_jv_j
     =r_1+\frac1{2m}\cD r_2\in\cK^\perp,\\
 2\ipR{f_\infty}{\varphi}=\ell,
 \qquad \ipR{\varphi}{v_j}=0,\\
 \Gamma\bm b=0,
 \qquad \bm b=0.
 \end{gathered}
\]
Write $\varphi=(\Phi_1+\ii\Phi_2,\Phi_3+\ii\Phi_4)^T$.
The last three real components satisfy
\[
 \cL_-\Phi_j=0,
 \qquad \ip{Q}{\Phi_j}=0\quad(j=2,3,4),
 \qquad \Phi_2=\Phi_3=\Phi_4=0.
\]
For every $O\in SO(3)$, radiality of the first-component data gives
\[
 \Phi_1\circ O^{-1}-\Phi_1
 \in\Ker\cL_+\cap
 \Span_\R\{\partial_1Q,\partial_2Q,\partial_3Q\}^{\perp}
 =\{0\}.
\]
Consequently,
\[
 \varphi\in\cW^s_{\mathrm{up}},
 \qquad
 \psi=\frac1{2m}(\cD\varphi-r_2)
       \in\cW^{s-1}_{\mathrm{down}}\cap H^s
       =\cW^s_{\mathrm{down}}.
\]
For $1\le s<2$, replace $r_i$ by $e^{t\Delta}r_i$.
The heat semigroup preserves both angular structures and
\[
 e^{t\Delta}r_i\longrightarrow r_i\quad\text{in }H^{s-1}
 \qquad(t\to0^+).
\]
Closedness of the two subspaces give
\eqref{eq:2.19} at these indices as well.
\end{proof}

{
\begin{lemma}\label{lem:2.10}
There exist \(c_1>0\) and \(r>0\) such that, for every \(c>c_1\), there
is a unique
\begin{equation*}
 \widetilde z_c:=(f_c,h_c,\lambda_c,\bm a_c)
 \in B_r(z_\infty)\subset X_2\times\R^6
\end{equation*}
satisfying
\begin{equation}
 \widehat{\cF}_c(\widetilde z_c)=0.
 \label{eq:2.21}
\end{equation}
Moreover,
\begin{equation}
 \norm[H^2]{f_c-f_\infty}
 +\norm[H^2]{h_c-h_\infty}
 +\abs{\lambda_c-\lambda_\infty}+\abs{\bm a_c}
 \lesssim c^{-2}.
 \label{eq:2.22}
\end{equation}
\end{lemma}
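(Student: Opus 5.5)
This is a quantitative implicit function theorem (Newton–Kantorovich) argument at $z_\infty$ for the family $\widehat{\cF}_c$, carried out in $X_2\times\R^6$. By \cref{lem:2.8} with $s=2$, the map $\widehat{\cF}_c:X_2\times\R^6\to X_1\times\R^6$ is $C^\infty$. Since $f_\infty=Q\mathbf e_1$ solves \eqref{eq:1.8} and $h_\infty=\frac1{2m}\cD f_\infty$, and $\ell_j(f_\infty)=0$, we have $\widehat{\cF}_\infty(z_\infty)=0$, where $z_\infty$ is read with $\bm a=0$. By \cref{lem:2.9} (applied at $s=2$), $\widehat{\cL}_\infty:X_2\times\R^6\to X_1\times\R^6$ is an isomorphism; set $M:=\|\widehat{\cL}_\infty^{-1}\|$.

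We rewrite the equation $\widehat{\cF}_c(z)=0$ as a fixed point problem
\begin{equation*}
 \Phi_c(z):=z-\widehat{\cL}_\infty^{-1}\widehat{\cF}_c(z).
\end{equation*}
For $z,z'\in B_r(z_\infty)$,
\begin{equation*}
 \Phi_c(z)-\Phi_c(z')=\widehat{\cL}_\infty^{-1}\int_0^1\Bigl(\widehat{\cL}_\infty-D_z\widehat{\cF}_c\bigl(z'+t(z-z')\bigr)\Bigr)(z-z')\,\dd t .
\end{equation*}
Split the integrand as $\bigl(D_z\widehat{\cF}_\infty(z_\infty)-D_z\widehat{\cF}_\infty(w)\bigr)+\bigl(D_z\widehat{\cF}_\infty(w)-D_z\widehat{\cF}_c(w)\bigr)$. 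The first bracket is $O(r)$ because $\widehat{\cF}_\infty$ is $C^2$ near $z_\infty$; concretely, the second derivative is bounded on bounded sets by the trilinear estimate \eqref{eq:2.1}. The second bracket is $O(c^{-2})$ by \eqref{eq:2.15}. Choosing $r$ small and then $c_1$ large gives a Lipschitz constant $\le\frac12$.

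For the self-mapping property, \eqref{eq:2.14} gives $\|\Phi_c(z_\infty)-z_\infty\|\le M\|\widehat{\cF}_c(z_\infty)\|\lesssim c^{-2}$, which is $<r/2$ for $c>c_1$. The contraction mapping principle therefore yields a unique fixed point $\widetilde z_c\in B_r(z_\infty)$. Moreover $\|\widetilde z_c-z_\infty\|\le 2\|\Phi_c(z_\infty)-z_\infty\|\lesssim c^{-2}$, which is \eqref{eq:2.22}.

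The only point needing care is the uniform local $C^{1,1}$ (or $C^2$) bound on $\widehat{\cF}_\infty$ near $z_\infty$ in $X_2\to X_1$. The nonlinearity is cubic, $(\abs{x}^{-1}\ast\abs f^2)f$, together with the quadratic constraint term $\norm[L^2]{f}^2$. Its second derivative is a sum of trilinear Hartree terms, and by \cref{lem:2.2} with $s=2$ (indeed $H^2\to H^2\subset H^1$) it is bounded on $B_r(z_\infty)$, uniformly in $c$ for the $c$-dependent terms as well. Uniformity in $c$ is already built into \eqref{eq:2.14}–\eqref{eq:2.15}, so no further obstacle is expected.

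Finally, the notation $(f_c,h_c,\lambda_c)$ names the components of $\widetilde z_c$. Identifying them with the ground state representative is not part of this statement and would be done later using \cref{prop:2.1}.
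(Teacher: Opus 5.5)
Your proof is correct and follows essentially the same route as the paper: both use the Newton-type map $z\mapsto z-\widehat{\cL}_\infty^{-1}\widehat{\cF}_c(z)$, a Lipschitz bound $\le\frac12$ on $B_r(z_\infty)$ coming from \eqref{eq:2.15} together with local regularity of $\widehat{\cF}_\infty$, and the estimate $\|\widehat{\cF}_c(z_\infty)\|_{X_1\times\R^6}\lesssim c^{-2}$. The only difference is cosmetic: the paper runs the contraction on $\overline B_{Mc^{-2}}(z_\infty)$ and then obtains uniqueness in $B_r(z_\infty)$ from the Lipschitz bound, whereas you contract on the ball of radius $r$ directly, which should be taken closed so the space is complete (your estimates give this immediately).
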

}

\begin{proof}
By Lemma~\ref{lem:2.9},
$\widehat{\cL}_\infty:X_2\times\R^6\to X_1\times\R^6$
is an isomorphism, and $\widehat{\cF}_\infty(z_\infty)=0$.
A direct computation gives
\begin{equation}
 \widehat{\cF}_c(z_\infty)
 =\begin{pmatrix}
  -c^{-2}(\abs{x}^{-1}\ast \abs{h_\infty}^2)f_\infty\\[0.2em]
  c^{-2}(\lambda_\infty-\abs{x}^{-1}\ast Q^2)h_\infty
  -c^{-4}(\abs{x}^{-1}\ast \abs{h_\infty}^2)h_\infty\\[0.2em]
  c^{-2}\norm[L^2]{h_\infty}^2\\[0.2em]
  0
 \end{pmatrix}.
 \label{eq:2.23}
\end{equation}
Consequently,
\begin{equation*}
 \norm[X_1\times\R^6]{\widehat{\cF}_c(z_\infty)}\lesssim c^{-2}.
\end{equation*}
By \eqref{eq:2.15}, choose \(r>0\) and then \(c_1\) so that
\begin{equation}
 \sup_{z\in B_r(z_\infty)}
 \norm[\cB(X_2\times\R^6)]{
 \Id-\widehat{\cL}_\infty^{-1}D_z\widehat{\cF}_c(z)}
 \le\frac12,
 \qquad c>c_1.
 \label{eq:2.24}
\end{equation}
Set
\begin{equation*}
 \Lambda_c(z):=z-\widehat{\cL}_\infty^{-1}\widehat{\cF}_c(z).
\end{equation*}
For sufficiently large \(M\), \eqref{eq:2.23} and
\eqref{eq:2.24} give
{
\begin{equation*}
 z\in \overline B_{Mc^{-2}}(z_\infty)
 \quad\Longrightarrow\quad
 \norm[X_2\times\R^6]{\Lambda_c(z)-z_\infty}
 \le\frac12\norm[X_2\times\R^6]{z-z_\infty}+Cc^{-2}
 \le Mc^{-2}.
\end{equation*}
Thus \(\Lambda_c\) maps the complete metric space
\(\overline B_{Mc^{-2}}(z_\infty)\) into itself and is a contraction there.
It has a unique fixed point in this closed ball. 
\eqref{eq:2.24} gives uniqueness in \(B_r(z_\infty)\),
and \eqref{eq:2.22} follows.
}
\end{proof}

On $\cW^2_{\mathrm{up}}\times\cW^2_{\mathrm{down}}\times\R$,
\eqref{eq:1.2} becomes
\begin{equation}
 \left\{
 \begin{aligned}
 &\left\{\lambda_c-
 [\abs{x}^{-1}\ast(F_c^2+c^{-2}G_c^2)](r)\right\}F_c
       -G_c'-\frac{2G_c}{r}=0,\\
 &F_c'-\left\{2m-c^{-2}\lambda_c+
 c^{-2}[\abs{x}^{-1}\ast(F_c^2+c^{-2}G_c^2)](r)\right\}G_c=0,\\
 &4\pi\int_0^\infty(F_c^2+c^{-2}G_c^2)r^2\,\dd r=1.
 \end{aligned}
 \right.
 \label{eq:2.25}
\end{equation}
The limiting profiles are
\begin{equation}
 (F_\infty,G_\infty,\lambda_\infty)
 =\left(Q,\frac{Q'}{2m},\lambda_\infty\right).
 \label{eq:2.26}
\end{equation}

\begin{proposition}\label{prop:2.11}
For all sufficiently large $c$, \eqref{eq:2.25} has a unique
solution near \eqref{eq:2.26} in
$\cW^2_{\mathrm{up}}\times\cW^2_{\mathrm{down}}\times\R$.
It is the branch of Lemma~\ref{lem:2.10}, and
\begin{equation}
 \begin{gathered}
 f_c(x)=F_c(\abs x)\mathbf e_1,
 \qquad h_c(x)=-\ii G_c(\abs x)(\sigma\cdot\widehat x)\mathbf e_1,\\
 \bm a_c=0,
 \qquad \ipR{f_c}{f_\infty}>0.
 \end{gathered}
 \label{eq:2.27}
\end{equation}
\end{proposition}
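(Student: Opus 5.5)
The plan is to run the contraction argument of Lemma~\ref{lem:2.10} inside the invariant subspace
\[
 Y_s:=\cW^s_{\mathrm{up}}\times\cW^s_{\mathrm{down}}\times\R\times\{0\},
\]
and then use uniqueness in the full space $X_2\times\R^6$ to identify the two branches.

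First, I would reduce \eqref{eq:2.25} to a problem in $Y_2$. By \eqref{eq:2.20}, a triple $(F,G,\lambda)$ with $F,G$ real radial solves \eqref{eq:2.25} exactly when $(f,h,\lambda)=(F\mathbf e_1,-\ii G(\sigma\cdot\widehat x)\mathbf e_1,\lambda)$ solves $\cF_c(f,h,\lambda)=0$. Since $\cW^0_{\mathrm{up}}\subset\cK^\perp$ and $f_\infty\in\cW^0_{\mathrm{up}}$, we have $\ell_j(f)=0$ for such $f$. Hence solutions of \eqref{eq:2.25} near \eqref{eq:2.26} correspond precisely to zeros $(f,h,\lambda,0)\in Y_2$ of $\widehat{\cF}_c$ near $z_\infty$. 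Note that $z_\infty\in Y_2$, because $h_\infty=\frac1{2m}\cD(Q\mathbf e_1)=-\frac{\ii}{2m}Q'(\sigma\cdot\widehat x)\mathbf e_1$.

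Second, I would prove existence and uniqueness in $Y_2$. By Lemma~\ref{lem:2.9}, $\widehat{\cF}_c$ maps $Y_2$ into $Y_1$ and $\widehat{\cL}_\infty^{-1}$ maps $Y_1$ into $Y_2$. Therefore the map $\Lambda_c(z)=z-\widehat{\cL}_\infty^{-1}\widehat{\cF}_c(z)$ preserves $Y_2$. The subspace $Y_2$ is closed in $X_2\times\R^6$. The contraction estimates \eqref{eq:2.23}--\eqref{eq:2.24} therefore restrict to $Y_2\cap\overline B_{Mc^{-2}}(z_\infty)$, and $\Lambda_c$ has a fixed point there. The unique zero of Lemma~\ref{lem:2.10} in $B_r(z_\infty)$ must equal this fixed point. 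Hence $\widetilde z_c\in Y_2$, which gives the representations of $f_c,h_c$ in \eqref{eq:2.27} together with $\bm a_c=0$.

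Any other solution of \eqref{eq:2.25} that is close to \eqref{eq:2.26} in the $\cW^2$ norms gives, by the first step, a zero of $\widehat{\cF}_c$ in $B_r(z_\infty)$. By the uniqueness in Lemma~\ref{lem:2.10}, it coincides with $\widetilde z_c$. Here "near" is interpreted as the $X_2$ ball of radius $r$, and the $H^2$ norm of $F\mathbf e_1$ is comparable to a radial Sobolev norm of $F$.

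Third, I would check the positivity condition. By \eqref{eq:2.22}, $\ipR{f_c}{f_\infty}=\norm[L^2]{Q}^2+O(c^{-2})=1+O(c^{-2})>0$ for large $c$.

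The only delicate point is the invariance of $\widehat{\cL}_\infty^{-1}$ on the angular subspaces at $s=2$. This is already provided by Lemma~\ref{lem:2.9}, and the preservation of the closed set by $\Lambda_c$ then makes the restricted fixed-point argument immediate.
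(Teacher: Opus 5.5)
Your proposal is correct and is essentially the paper's proof. You restrict the contraction $\Lambda_c$ to the closed set $\overline B_{Mc^{-2}}(z_\infty)\cap\bigl(\cW^2_{\mathrm{up}}\times\cW^2_{\mathrm{down}}\times\R\times\{0\}\bigr)$ using the invariance in \cref{lem:2.9}. You identify its fixed point with $\widetilde z_c$ by the uniqueness in \cref{lem:2.10}, and you obtain $\ipR{f_c}{f_\infty}\ge 1-\norm[L^2]{f_c-f_\infty}\ge 1-Cc^{-2}>0$. Your explicit check that $\ell_j(f)=0$ for real radial $f$, because $f-f_\infty\in\cW^0_{\mathrm{up}}\subset\cK^\perp$, spells out the step the paper compresses into ``local uniqueness follows from \cref{lem:2.10}''.
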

\begin{proof}
By Lemma~\ref{lem:2.9}, the argument of Lemma~\ref{lem:2.10}
applies to
\[
 \Lambda_c(z):=
 z-\widehat{\cL}_\infty^{-1}\widehat{\cF}_c(z)
\]
on the closed set
\[
 \overline B_{Mc^{-2}}(z_\infty)
 \cap
 \left(
 \cW^2_{\mathrm{up}}\times\cW^2_{\mathrm{down}}
 \times\R\times\{0\}
 \right)
 \subset X_2\times\R^6.
\]
For fixed sufficiently large $M$ and all sufficiently large $c$,
$\Lambda_c$ maps this set into itself and is a contraction.
Its unique fixed point coincides with $\widetilde z_c$ from
Lemma~\ref{lem:2.10}. Hence
\[
 (f_c,h_c)\in
 \cW^2_{\mathrm{up}}\times\cW^2_{\mathrm{down}},
 \qquad
 \bm a_c=0,
 \qquad
 \cF_c(f_c,h_c,\lambda_c)=0.
\]
Thus the profiles satisfy \eqref{eq:2.25}, and local uniqueness
follows from Lemma~\ref{lem:2.10}. Finally,
\[
 \ipR{f_c}{f_\infty}
 \ge 1-\norm[L^2]{f_c-f_\infty}
 \ge 1-Cc^{-2}>0.
 \qedhere
\]
\end{proof}

By the standard bootstrapping argument, 
we obtain that the solution constructed in \cref{prop:2.11} converges in each
$H^s$.
{
\begin{lemma}
\label{lem:2.12}
For every \(s\ge\tfrac12\), the local radial branch satisfies
\begin{equation*}
 f_c,h_c\in H^s(\R^3,\C^2),
 \qquad
 (f_c,h_c,\lambda_c,0)\longrightarrow z_\infty
 \quad\text{in }X_s\times\R^6.
\end{equation*}
Moreover, for every \(s\ge2\),
\begin{equation*}
 \sup_{c\ge c_s}
 \left(\norm[H^s]{f_c}+\norm[H^s]{h_c}\right)<\infty.
\end{equation*}
\end{lemma}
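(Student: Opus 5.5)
We bootstrap regularity inductively in \(s\) using the system \(\cF_c(f_c,h_c,\lambda_c)=0\). By Lemma~\ref{lem:2.10} and Proposition~\ref{prop:2.11}, the branch \(\widetilde z_c=(f_c,h_c,\lambda_c,0)\) lies in \(X_2\times\R^6\) and satisfies \eqref{eq:2.22}. The goal is to show that \(f_c,h_c\in H^s\) with bounds uniform in \(c\ge c_s\), and then to get convergence in every \(X_s\).

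\emph{Gaining one derivative at a time.} Write the first two equations of \eqref{eq:2.12} as
\[
 \cD h_c=-\bigl\{\lambda_c-V_c\bigr\}f_c,\qquad
 \cD f_c=2mh_c-c^{-2}\bigl\{\lambda_c-V_c\bigr\}h_c,
\]
where \(V_c:=\abs{x}^{-1}\ast(\abs{f_c}^2+c^{-2}\abs{h_c}^2)\).

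Assume \(\norm[H^s]{f_c}+\norm[H^s]{h_c}\le M_s\) uniformly for \(c\ge c_s\), with \(s\ge2\). Lemma~\ref{lem:2.2} bounds \(V_cf_c\) and \(V_ch_c\) in \(H^s\) by \(CM_s^3\). Since \(\lambda_c\to\lambda_\infty\), the right-hand sides are bounded in \(H^s\) uniformly in \(c\).

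Lemma~\ref{lem:2.3} then gives
\[
 \norm[H^{s+1}]{f_c}+\norm[H^{s+1}]{h_c}\lesssim \norm[H^s]{\cD f_c}+\norm[H^s]{\cD h_c}+M_s\le M_{s+1}.
\]
The constant is independent of \(c\), since \(c^{-2}\le1\). Induction over integers \(s\ge2\), starting from \eqref{eq:2.22}, gives the uniform bound for every \(s\ge2\) with \(c_s=c_1\). Non-integer \(s\) follow by monotonicity of Sobolev norms. A subtlety to note is that \(\lambda_c\) need only be bounded here, which \eqref{eq:2.22} provides.

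\emph{Convergence in \(X_s\).} The difference satisfies
\[
 \widehat{\cF}_\infty(\widetilde z_c)-\widehat{\cF}_\infty(z_\infty)=\widehat{\cF}_\infty(\widetilde z_c)-\widehat{\cF}_c(\widetilde z_c),
\]
which is \(O(c^{-2})\) in \(X_{s}\times\R^6\) by the explicit formula in the proof of Lemma~\ref{lem:2.8}, using the uniform \(H^{s+1}\) bounds together with Lemma~\ref{lem:2.2}. Write the left side as \(\widehat{\cL}_\infty(\widetilde z_c-z_\infty)\) plus quadratic and cubic terms in \(\widetilde z_c-z_\infty\).

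Lemma~\ref{lem:2.9} gives that \(\widehat{\cL}_\infty^{-1}\) is bounded from \(X_{s}\times\R^6\) to \(X_{s+1}\times\R^6\). The nonlinear remainder is bounded by
\[
 C\norm[X_{s+1}]{\widetilde z_c-z_\infty}\,\norm[X_{s}]{\widetilde z_c-z_\infty},
\]
with \(C\) depending on the uniform bounds. Convergence in \(X_s\) can then be proved by induction on \(s\). Alternatively, and more simply, interpolate between the \(O(c^{-2})\) convergence in \(X_2\) from \eqref{eq:2.22} and the uniform bounds in \(X_{s+1}\). Interpolation immediately yields convergence in every \(X_s\), at rate \(c^{-2\theta}\). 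For \(1/2\le s<2\) the claim follows directly from \eqref{eq:2.22}.

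\emph{Main obstacle.} The step that needs the most care is keeping the constants uniform in \(c\). The bootstrap must not pass through the \(c\)-dependent operator \(\mathscr{D}_c\). Working with the rescaled unknown \(h_c=cg_c\) and the \(c\)-independent operator \(\cD\) avoids this, because every \(c\)-dependent coefficient is then \(c^{-2}\) times a bounded quantity.
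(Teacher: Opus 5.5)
Your proposal is correct and is essentially the ``standard bootstrapping argument'' the paper invokes without giving details: you gain one derivative at a time through the $c$-independent operator $\cD$ using Lemmas~\ref{lem:2.2}--\ref{lem:2.3}, start from the uniform $X_2$ bound \eqref{eq:2.22}, and then interpolate those uniform bounds against the $O(c^{-2})$ rate in $X_2$ to get convergence in every $X_s$. One small point: Lemma~\ref{lem:2.3} is stated as an a priori estimate for $f\in H^{s+1}$, whereas your induction needs that $f,\cD f\in H^s$ implies $f\in H^{s+1}$. Its pointwise Plancherel proof gives exactly that, so the step goes through.
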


}

{
\begin{lemma}
\label{lem:2.13}
Let \(s\ge\tfrac12\), \(z=(f,h,\lambda,\bm a)\in X_s\times\R^6\), and
\((\varphi,\psi,\mu,\bm b)\in X_s\times\R^6\).  Then
\begin{equation}
\begin{aligned}
&D_z\widehat{\cF}_c(z)(\varphi,\psi,\mu,\bm b)\\[-0.2em]
&\quad=
\left(
\begin{array}{c}
\begin{aligned}
 \cD\psi
 &+\Bigl[\lambda-\abs{x}^{-1}\ast
       \bigl(\abs f^2+c^{-2}\abs h^2\bigr)\Bigr]\varphi\\[-0.1em]
 &+\Bigl[\mu-2\abs{x}^{-1}\ast \Re
       \bigl(f^\dagger\varphi+c^{-2}h^\dagger\psi\bigr)\Bigr]f
 +\displaystyle\sum_{j=1}^6b_jv_j
\end{aligned}
\\[0.8em]
\begin{aligned}
 \cD\varphi-2m\psi
 &+c^{-2}\Bigl[\lambda-\abs{x}^{-1}\ast
       \bigl(\abs f^2+c^{-2}\abs h^2\bigr)\Bigr]\psi\\[-0.1em]
 &+c^{-2}\Bigl[\mu-2\abs{x}^{-1}\ast \Re
       \bigl(f^\dagger\varphi+c^{-2}h^\dagger\psi\bigr)\Bigr]h
\end{aligned}
\\[0.7em]
 2\ipR{f}{\varphi}+2c^{-2}\ipR{h}{\psi}
\\[0.35em]
 (\ipR{\varphi}{v_j})_{j=1}^6
\end{array}
\right).
\end{aligned}
\label{eq:2.28}
\end{equation}
It is a bounded operator from
\(X_s\times\R^6\) to \(X_{s-1}\times\R^6\).
For every \(s\ge2\), there are \(c_s,\delta_s,C_s>0\) such that
\begin{equation}
 \norm[X_s\times\R^6]{z-z_\infty}<\delta_s,
 \quad c\ge c_s
 \quad\Longrightarrow\quad
 \norm[\cB(X_{s-1}\times\R^6,X_s\times\R^6)]{
 D_z\widehat{\cF}_c(z)^{-1}}\le C_s.
 \label{eq:2.29}
\end{equation}
\end{lemma}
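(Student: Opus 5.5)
The formula \eqref{eq:2.28} is obtained by differentiating \eqref{eq:2.13} term by term. The map $\lambda\mapsto\lambda f$ contributes $\lambda\varphi+\mu f$. The cubic term $(\abs{x}^{-1}\ast(\abs f^2+c^{-2}\abs h^2))f$ contributes two pieces: one where the convolution stays fixed and acts on $\varphi$, and one where $\abs f^2$ is differentiated, giving $2\Re(f^\dagger\varphi)$, and likewise $2c^{-2}\Re(h^\dagger\psi)$ from $\abs h^2$. The second row is handled in the same way, with the extra factor $c^{-2}$. The constraint row has derivative $2\ipR{f}{\varphi}+2c^{-2}\ipR{h}{\psi}$. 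The $\bm a$-part and the functionals $\ell_j$ are affine, so their derivatives are $\sum b_jv_j$ and $(\ipR{\varphi}{v_j})_j$. Boundedness from $X_s\times\R^6$ to $X_{s-1}\times\R^6$ then follows for $s\ge\frac12$. The operator $\cD$ maps $H^s$ boundedly into $H^{s-1}$. Every trilinear Coulomb term is bounded in $H^s\subset H^{s-1}$ by \cref{lem:2.2}. The scalar rows are bounded by Cauchy--Schwarz, since $v_j\in L^2$.

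For \eqref{eq:2.29}, fix $s\ge2$ and argue by perturbation from $\widehat{\cL}_\infty=D_z\widehat{\cF}_\infty(z_\infty)$. By \cref{lem:2.9} this operator is an isomorphism $X_s\times\R^6\to X_{s-1}\times\R^6$, so $\norm{\widehat{\cL}_\infty^{-1}}\le K_s$ for some $K_s$. We need to bound
\[
\norm[\cB(X_s\times\R^6,X_{s-1}\times\R^6)]{D_z\widehat{\cF}_c(z)-\widehat{\cL}_\infty}.
\]
Split it as
\[
\bigl(D_z\widehat{\cF}_c(z)-D_z\widehat{\cF}_\infty(z)\bigr)+\bigl(D_z\widehat{\cF}_\infty(z)-D_z\widehat{\cF}_\infty(z_\infty)\bigr).
\]
The first term is $\lesssim c^{-2}$ by \eqref{eq:2.15}, taking $R=1$ there. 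For the second term, the coefficients in \eqref{eq:2.28} with $c^{-2}=0$ depend polynomially on $(f,\lambda)$: linearly on $\lambda$ and quadratically on $f$ through the Coulomb terms. By \cref{lem:2.2} their difference is therefore $\le C_s\norm[X_s\times\R^6]{z-z_\infty}$ whenever $\norm{z-z_\infty}\le1$.

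Now choose $\delta_s$ and $c_s$ so that the total perturbation is at most $1/(2K_s)$. A Neumann series argument then gives
\[
D_z\widehat{\cF}_c(z)=\widehat{\cL}_\infty\bigl(\Id-\widehat{\cL}_\infty^{-1}(\widehat{\cL}_\infty-D_z\widehat{\cF}_c(z))\bigr),
\]
which is invertible with inverse norm $\le2K_s=:C_s$.

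The only point needing care is the Lipschitz bound for the $z$-dependence at the fixed regularity index $s$. It requires the multilinear estimate \eqref{eq:2.1} in $H^s$ rather than merely in $H^{s-1}$, and that is exactly what \cref{lem:2.2} provides. Everything else is routine.
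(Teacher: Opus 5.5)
Your proposal is correct and follows the paper's proof: \eqref{eq:2.28} by direct differentiation, boundedness from \cref{lem:2.2}, and \eqref{eq:2.29} by a Neumann series around $\widehat{\cL}_\infty$ using \cref{lem:2.9} and the perturbation bound \eqref{eq:2.30}. Your split of that perturbation into the $O(c^{-2})$ piece from \eqref{eq:2.15} and a Lipschitz-in-$z$ piece via \cref{lem:2.2} just makes explicit what the paper compresses into \eqref{eq:2.30}.
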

}

\begin{proof}
Differentiation gives \eqref{eq:2.28}; boundedness follows from
Lemma~\ref{lem:2.2}. By \eqref{eq:2.15},
for $s\ge2$ and $\norm[X_s\times\R^6]{z-z_\infty}\le1$,
\begin{equation}
 \norm[\cB(X_s\times\R^6,X_{s-1}\times\R^6)]{
 D_z\widehat{\cF}_c(z)-\widehat{\cL}_\infty}
 \le C_s\bigl(c^{-2}+\norm[X_s\times\R^6]{z-z_\infty}\bigr).
 \label{eq:2.30}
\end{equation}
Lemma~\ref{lem:2.9} allows $c_s,\delta_s$ to be chosen so that
\[
 \norm[\cB(X_s\times\R^6)]{
 \widehat{\cL}_\infty^{-1}
       (D_z\widehat{\cF}_c(z)-\widehat{\cL}_\infty)}\le\frac12.
\]
Thus
\[
 \begin{aligned}
 (D_z\widehat{\cF}_c(z))^{-1}
 &=\sum_{k=0}^\infty
 \left[-\widehat{\cL}_\infty^{-1}
       (D_z\widehat{\cF}_c(z)-\widehat{\cL}_\infty)\right]^k
       \widehat{\cL}_\infty^{-1},\\
 \norm[\cB(X_{s-1}\times\R^6,X_s\times\R^6)]{
       (D_z\widehat{\cF}_c(z))^{-1}}
 &\le2\norm[\cB(X_{s-1}\times\R^6,X_s\times\R^6)]{
       \widehat{\cL}_\infty^{-1}}.
 \end{aligned}
\]
This proves \eqref{eq:2.29}.
\end{proof}

{
\begin{lemma}
\label{lem:2.14}
For every \(s\ge\tfrac12\),
\begin{equation}
 \norm[H^s]{f_c-f_\infty}
 +\norm[H^s]{h_c-h_\infty}
 +\abs{\lambda_c-\lambda_\infty}
 \lesssim c^{-2}.
 \label{eq:2.31}
\end{equation}
\end{lemma}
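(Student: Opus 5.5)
We upgrade the $H^2$ estimate \eqref{eq:2.22} of Lemma~\ref{lem:2.10} to every $H^s$. The idea is to use the uniform inverse bound \eqref{eq:2.29} of Lemma~\ref{lem:2.13} together with the qualitative convergence from Lemma~\ref{lem:2.12}. Since $X_s\hookrightarrow X_{s'}$ for $s\ge s'$, it suffices to treat $s\ge2$.

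Fix $s\ge2$ and write $\widetilde z_c=(f_c,h_c,\lambda_c,0)$; by Proposition~\ref{prop:2.11}, $\bm a_c=0$. By Lemma~\ref{lem:2.12}, $\widetilde z_c\to z_\infty$ in $X_s\times\R^6$. Hence for $c$ large both $\widetilde z_c$ and the whole segment $z_t:=z_\infty+t(\widetilde z_c-z_\infty)$, $t\in[0,1]$, lie in the ball of radius $\delta_s$ from \eqref{eq:2.29}. Since $\widehat{\cF}_c(\widetilde z_c)=0$, the mean value theorem in integral form gives
\begin{equation*}
 -\widehat{\cF}_c(z_\infty)
 =\widehat{\cF}_c(\widetilde z_c)-\widehat{\cF}_c(z_\infty)
 =\cA_c(\widetilde z_c-z_\infty),
 \qquad
 \cA_c:=\int_0^1 D_z\widehat{\cF}_c(z_t)\,\dd t.
\end{equation*}

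Next we show that $\cA_c$ is uniformly invertible. By \eqref{eq:2.30}, we have $\|\cA_c-\widehat{\cL}_\infty\|_{\cB(X_s\times\R^6,X_{s-1}\times\R^6)}\le C_s(c^{-2}+\delta_s)$. The Neumann series argument in the proof of Lemma~\ref{lem:2.13}, with $\delta_s$ and $c_s$ chosen as there, then shows that $\cA_c^{-1}$ exists with norm at most $2\|\widehat{\cL}_\infty^{-1}\|$ from $X_{s-1}\times\R^6$ to $X_s\times\R^6$. This uses Lemma~\ref{lem:2.9}. Alternatively, one may skip the averaging: write $-\widehat{\cF}_c(z_\infty)=\widehat{\cL}_\infty(\widetilde z_c-z_\infty)+N_c$, where $\|N_c\|\lesssim c^{-2}\|\widetilde z_c-z_\infty\|+\|\widetilde z_c-z_\infty\|^2$. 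Then absorb $N_c$ using the smallness of $\|\widetilde z_c-z_\infty\|_{X_s}$.

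Therefore
\begin{equation*}
 \norm[X_s\times\R^6]{\widetilde z_c-z_\infty}
 \lesssim \norm[X_{s-1}\times\R^6]{\widehat{\cF}_c(z_\infty)}.
\end{equation*}
The right-hand side is given explicitly by \eqref{eq:2.23}. Each of its entries carries a factor $c^{-2}$ (or $c^{-4}$) multiplying a fixed smooth expression in $Q$, $h_\infty=\frac1{2m}\cD(Q\mathbf e_1)$ and $\lambda_\infty$. Lemma~\ref{lem:2.2} bounds these in $H^{s}\subset H^{s-1}$ by constants depending only on $\|Q\|_{H^{s+1}}$, which is finite since $Q$ is smooth. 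Hence the right-hand side is $O(c^{-2})$, which yields \eqref{eq:2.31}.

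The only delicate point is that the linearization is inverted in $X_s$ and not merely in $X_2$. We must make sure that $\widetilde z_c$ actually lies in the $\delta_s$-neighbourhood in $X_s$, not just in $X_2$, where the contraction argument of Lemma~\ref{lem:2.10} was set. This is exactly what the qualitative convergence of Lemma~\ref{lem:2.12} provides. With it, the constants depend on $s$ only through $C_s$, $\delta_s$ and $\|Q\|_{H^{s+1}}$.
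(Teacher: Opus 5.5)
Your proposal is correct and follows the paper's proof. You fix $s\ge2$, write $-\widehat{\cF}_c(z_\infty)$ as the averaged derivative $\int_0^1 D_z\widehat{\cF}_c(z_\infty+t(z_c-z_\infty))\,\dd t$ applied to $z_c-z_\infty$, use Lemma~\ref{lem:2.12} with \eqref{eq:2.30} for uniform invertibility in $\cB(X_{s-1}\times\R^6,X_s\times\R^6)$, and conclude from $\norm[X_{s-1}\times\R^6]{\widehat{\cF}_c(z_\infty)}\lesssim c^{-2}$ via \eqref{eq:2.23}. The paper leaves implicit two points you make explicit: reducing $\tfrac12\le s<2$ to $s\ge2$ by embedding, and taking $X_s$-closeness from Lemma~\ref{lem:2.12} rather than Lemma~\ref{lem:2.10}.
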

}

\begin{proof}
Fix \(s\ge2\).  Since \(\widehat{\cF}_c(z_c)=0\),
\begin{equation*}
 \left[\int_0^1D_z\widehat{\cF}_c
 \bigl(z_\infty+t(z_c-z_\infty)\bigr)\,\dd t\right]
 (z_c-z_\infty)=-\widehat{\cF}_c(z_\infty).
\end{equation*}
By \cref{lem:2.12} and
\eqref{eq:2.30}, the operator 
\[
\int_0^1D_z\widehat{\cF}_c
 \bigl(z_\infty+t(z_c-z_\infty)\bigr)\,\dd t
\]
converges in operator norm to
\(D_z\widehat{\cF}_\infty(z_\infty)\).  It is therefore uniformly
invertible for large \(c\).  Since
\begin{equation*}
 \norm[X_{s-1}\times\R^6]{\widehat{\cF}_c(z_\infty)}
 \lesssim c^{-2},
\end{equation*}
we obtain \eqref{eq:2.31} for \(s\ge2\).

\end{proof}

\section{Uniqueness and nondegeneracy of ground states}
\label{sec:3}

Let
\begin{equation*}
 \cO_\infty:=\cH\cdot(f_\infty,h_\infty)
 \subset H^2(\R^3,\C^2)\times H^2(\R^3,\C^2).
\end{equation*}
{
The orbit of \((f_c,h_c)\) has the local parametrization
\begin{equation*}
 \Theta_c(p)(x)
 :=\binom{
 F_c(\abs{x+y})\,\Sigma(a)}{
 -\ii G_c(\abs{x+y})(\sigma\cdot\widehat{x+y})\,\Sigma(a)},
 \qquad
 p=(y,a)\in\R^3\times\R^3,
\end{equation*}
where
\begin{equation*}
 \Sigma(a)
 :=\frac{(1+\ii a_1)\mathbf e_1+(a_2+\ii a_3)\mathbf e_2}
 {\sqrt{1+\abs a^2}}
 =:\binom{\Sigma_1(a)}{\Sigma_2(a)}\in\mathbb S^3.
\end{equation*}
Define
\begin{equation*}
 S(a):=
 \begin{pmatrix}
  \Sigma_1(a)&-\overline{\Sigma_2(a)}\\
  \Sigma_2(a)& \overline{\Sigma_1(a)}
 \end{pmatrix}\in SU(2),
 \qquad
 \cT_p:=\cT_{(-y,S(a),1)}.
\end{equation*}
Then \(S(a)\mathbf e_1=\Sigma(a)\), and the covariance
\eqref{eq:1.9} gives
\begin{equation*}
 \Theta_c(p)=\cT_p(f_c,h_c).
\end{equation*}
Since \(a\mapsto\Sigma(a)\) is a coordinate chart on \(\mathbb S^3\) near
\(\mathbf e_1\), the map \(p\mapsto\cT_p\) is an explicit smooth local
section of the quotient by the one-dimensional stabilizer.  Define
\begin{equation}
 y_{j,c}:=\partial_{p_j}\Theta_c(p)\big|_{p=0},
 \qquad 1\le j\le6.
 \label{eq:3.1}
\end{equation}
Here \(F_\infty=Q\) and \(G_\infty=Q'/(2m)\).  Direct differentiation gives
\begin{equation}
 (y_{j,\infty})_\uparrow=v_j,
 \qquad 1\le j\le6.
 \label{eq:3.2}
\end{equation}
Thus the upper components of the six tangent vectors form precisely the
fixed basis of \(\cK\).

\begin{lemma}\label{lem:3.1}
There are \(\delta_0,\rho>0\) such that, whenever
\((f,h)\in H^2(\R^3,\C^2)\times H^2(\R^3,\C^2)\),
\(\gamma_0\in\cH\), and
\begin{equation}\label{eq:3.3}
 \norm[H^2]{\cT_{\gamma_0}^{-1}f-f_\infty}
 +\norm[H^2]{\cT_{\gamma_0}^{-1}h-h_\infty}<\delta_0,
\end{equation}
there is a unique \(p\in\R^6\), \(\abs p<\rho\), in the local section above
such that
\begin{equation*}
 \ipR{\cT_p^{-1}\cT_{\gamma_0}^{-1}f-f_\infty}{v_j}=0,
 \qquad 1\le j\le6.
\end{equation*}
\end{lemma}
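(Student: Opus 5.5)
This is a standard modulation argument via the implicit function theorem. Set $w:=\cT_{\gamma_0}^{-1}f$, so $\norm[H^2]{w-f_\infty}<\delta_0$. Only the upper component enters the orthogonality conditions, so $h$ plays no role beyond \eqref{eq:3.3}. Define
\[
 \Phi:\R^6\times H^2(\R^3,\C^2)\to\R^6,\qquad
 \Phi_j(p,w):=\ipR{\cT_p^{-1}w-f_\infty}{v_j},\quad 1\le j\le6 .
\]
Since $\cT_p$ is unitary on $L^2$, we have $\Phi_j(p,w)=\ipR{w-\cT_pf_\infty}{\cT_pv_j}$. The map $p\mapsto\cT_p v_j$ is smooth into $L^2$ because $v_j$ is smooth and decaying, and $S(a)$, $R_{S(a)}$ depend smoothly on $a$. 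Likewise $p\mapsto \cT_pf_\infty=Q(\abs{x+y})\Sigma(a)$ is smooth. Hence $\Phi$ is $C^1$, jointly in $(p,w)$, with $w$ measured even in $L^2$. Clearly $\Phi(0,f_\infty)=0$.

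The key step is nondegeneracy of $D_p\Phi(0,f_\infty)$. Differentiating at $p=0$, $w=f_\infty$ gives
\[
 \partial_{p_i}\Phi_j(0,f_\infty)=-\ipR{\partial_{p_i}(\cT_pf_\infty)|_{p=0}}{v_j}+\ipR{0}{\cdot}=-\ipR{v_i}{v_j}=-\Gamma_{ji}.
\]
Here we used \eqref{eq:3.2}: the upper component of $\partial_{p_i}\Theta_\infty(p)|_{p=0}$ is $v_i$, and $\Theta_\infty(p)_\uparrow=\cT_pf_\infty$. The derivative of $\cT_pv_j$ is multiplied by $w-\cT_pf_\infty$, which vanishes at the base point. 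For the $p$-derivative one can check directly: $\partial_{y_i}Q(\abs{x+y})\mathbf e_1=\partial_iQ\,\mathbf e_1$, and $\partial_{a}\Sigma(a)|_{0}$ gives $\ii\mathbf e_1,\mathbf e_2,\ii\mathbf e_2$. So $D_p\Phi(0,f_\infty)=-\Gamma$, which is invertible because the Gram matrix $\Gamma$ in \eqref{eq:2.11} is positive definite (the $v_j$ are linearly independent).

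By the implicit function theorem there are $\rho,\delta_0>0$ such that for $\norm[H^2]{w-f_\infty}<\delta_0$ there is a unique $p$ with $\abs p<\rho$ and $\Phi(p,w)=0$. This is exactly the claim with $w=\cT_{\gamma_0}^{-1}f$. The constants are independent of $\gamma_0$ because everything is expressed through $w$.

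The main point needing care is uniqueness in the full ball $\abs p<\rho$, rather than merely in some smaller implicit-function neighbourhood. To handle this, I would use the quantitative version: choose $\rho$ so that $\norm{D_p\Phi(p,w)+\Gamma}\le\frac12\norm{\Gamma^{-1}}^{-1}$ for $\abs p<\rho$ and $\norm[L^2]{w-f_\infty}<\delta_0$. This is possible by continuity of $D_p\Phi$. Then $p\mapsto p+\Gamma^{-1}\Phi(p,w)$ is a $\frac12$-contraction on $\overline B_\rho$, and it maps the ball into itself once $\delta_0$ is small relative to $\rho$, since $\abs{\Phi(0,w)}\lesssim\norm[L^2]{w-f_\infty}$. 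This yields both existence and uniqueness in $\abs p<\rho$.
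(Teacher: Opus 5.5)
Your proposal is correct and follows the paper's proof essentially step for step. The paper also uses unitarity to rewrite the conditions as $\ipR{w}{\cT_pv_i}-\ipR{f_\infty}{v_i}$. It obtains $\mathrm d_p\cP(0,f_\infty)=-\Gamma$ by differentiating $\ipR{\cT_pf_\infty}{\cT_pv_i}=\ipR{f_\infty}{v_i}$, rather than by your product rule with a vanishing factor, and then runs exactly your contraction $p\mapsto p+\Gamma^{-1}\cP(p,w)$ on $\overline B_\rho(0)$ to get existence and uniqueness in the whole ball. The one addition in the paper is the bound $\abs p\le 2\abs{\Gamma^{-1}\cP(0,w)}\lesssim\norm[H^2]{w-f_\infty}$ from the same fixed-point inequality; it places the fixed point strictly inside $\abs p<\rho$ and is reused in Proposition~\ref{prop:3.2}.
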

\begin{proof}
For \(p=(y,a)\), write \(S=S(a)\) and \(R=R_{S(a)}\).  By the
radiality of \(Q\),
\begin{align*}
 \cT_pv_j(x)
 &=\sum_{k=1}^3R_{kj}\,\partial_kQ(x+y)\,\Sigma(a),
 &&1\le j\le3,\\
 \cT_pv_4(x)&=\ii Q(\abs{x+y})\Sigma(a),\\
 \cT_pv_5(x)&=Q(\abs{x+y})S(a)\mathbf e_2,\\
 \cT_pv_6(x)&=\ii Q(\abs{x+y})S(a)\mathbf e_2.
\end{align*}
Since \(Q\in\bigcap\limits_{s\ge0}H^s(\R^3)\), for every multi-index
\(\alpha\) and sufficiently small \(\rho_0>0\),
\begin{equation}
 \sup_{\abs p\le\rho_0}
 \norm[H^2]{\partial_p^\alpha(\cT_pv_i)}<\infty,
 \qquad 1\le i\le6.
 \label{eq:3.4}
\end{equation}

For \(p\in B_{\rho_0}(0)\) and \(w\in H^2(\R^3,\C^2)\), define
\begin{equation*}
 \cP(p,w)
 :=\bigl(\ipR{\cT_p^{-1}w-f_\infty}{v_i}\bigr)_{i=1}^6.
\end{equation*}
The action \(\cT_p\) is unitary on \(L^2(\R^3,\C^2)\), hence
\begin{equation}
 \cP_i(p,w)
 =\ipR{w}{\cT_pv_i}-\ipR{f_\infty}{v_i},
 \qquad 1\le i\le6.
 \label{eq:3.5}
\end{equation}
It follows from \eqref{eq:3.4} that
\begin{equation*}
 \cP\in C^\infty\bigl(
 B_{\rho_0}(0)\times H^2(\R^3,\C^2),\R^6
 \bigr).
\end{equation*}
Moreover,
\begin{equation*}
 \cP(0,f_\infty)=0.
\end{equation*}
For \(1\le i,j\le6\),
\begin{equation*}
 \ipR{\cT_p f_\infty}{\cT_pv_i}
 =\ipR{f_\infty}{v_i}.
\end{equation*}
Differentiating at \(p=0\) in the \(p_j\) direction and using
\eqref{eq:3.2},
\begin{align*}
 \partial_{p_j}\cP_i(0,f_\infty)
 &=\ipR{f_\infty}{
 \partial_{p_j}(\cT_pv_i)|_{p=0}}\\
 &=-\ipR{
 \partial_{p_j}(\cT_pf_\infty)|_{p=0}}{v_i}\\
 &=-\ipR{(y_{j,\infty})_\uparrow}{v_i}\\
 &=-\ipR{v_j}{v_i}
 =-\Gamma_{ij}.
\end{align*}
Thus
\begin{equation}
 \mathrm d_p\cP(0,f_\infty)=-\Gamma.
 \label{eq:3.6}
\end{equation}
Choose \(0<\rho<\rho_0\) and \(\delta_0>0\) such that
\begin{equation}
 \sup_{\substack{\abs p\le\rho\\
 \norm[H^2]{w-f_\infty}\le\delta_0}}
 \norm[\cB(\R^6,\R^6)]{
 \Gamma^{-1}\bigl(\mathrm d_p\cP(p,w)+\Gamma\bigr)}
 \le\frac12
 \label{eq:3.7}
\end{equation}
and
\begin{equation}
       \abs{\Gamma^{-1}\cP(0,w)}\le\frac{\rho}{4}
 \label{eq:3.8}
\end{equation}
provided
\begin{equation*}
 \norm[H^2]{w-f_\infty}\le\delta_0.
\end{equation*}
Indeed,
\begin{align*}
 \abs{\Gamma^{-1}\cP(0,w)}
 &\le
 \norm[\cB(\R^6,\R^6)]{\Gamma^{-1}}
 \left(
 \sum_{i=1}^6
 \abs{\ipR{w-f_\infty}{v_i}}^2
 \right)^{1/2}\\
 &\le
 \norm[\cB(\R^6,\R^6)]{\Gamma^{-1}}
 \left(
 \sum_{i=1}^6\norm[L^2]{v_i}^2
 \right)^{1/2}
 \norm[H^2]{w-f_\infty}.
\end{align*}
For
\[
 \norm[H^2]{w-f_\infty}\le\delta_0,
\]
set
\begin{equation*}
 \Psi_w(p):=p+\Gamma^{-1}\cP(p,w),
 \qquad p\in\overline B_\rho(0).
\end{equation*}
By \eqref{eq:3.7},
\begin{align*}
 \norm[\cB(\R^6,\R^6)]{\mathrm d\Psi_w(p)}
 &=
 \norm[\cB(\R^6,\R^6)]{
 \Gamma^{-1}
 \bigl(\mathrm d_p\cP(p,w)+\Gamma\bigr)}
 \le\frac12,\\
 \abs{\Psi_w(p)-\Psi_w(q)}
 &\le\frac12\abs{p-q}.
\end{align*}
Moreover, by \eqref{eq:3.8},
\begin{equation*}
 \abs{\Psi_w(p)}
 \le
 \abs{\Psi_w(p)-\Psi_w(0)}
 +\abs{\Psi_w(0)}
 \le
 \frac12\abs p+\frac{\rho}{4}
 \le\frac{3\rho}{4}.
\end{equation*}
Hence
\begin{equation*}
 \Psi_w\bigl(\overline B_\rho(0)\bigr)
 \subset\overline B_\rho(0).
\end{equation*}
Banach's fixed-point theorem gives a unique
\(p\in\overline B_\rho(0)\) satisfying
\begin{equation*}
 p=\Psi_w(p)
 \quad\Longleftrightarrow\quad
 \cP(p,w)=0.
\end{equation*}
Furthermore,
\begin{align}
 \abs p
 &\le
 \frac12\abs p+
 \abs{\Gamma^{-1}\cP(0,w)},\notag\\
 \abs p
 &\le
 2\abs{\Gamma^{-1}\cP(0,w)}
 \lesssim\norm[H^2]{w-f_\infty},
 \qquad \abs p<\rho.
 \label{eq:3.9}
\end{align}
Set
\begin{equation*}
 w:=\cT_{\gamma_0}^{-1}f.
\end{equation*}
\eqref{eq:3.3} gives
\begin{equation*}
 \norm[H^2]{w-f_\infty}<\delta_0.
\end{equation*}
Therefore there is a unique \(p\in\R^6\), \(\abs p<\rho\), such that
\begin{equation*}
 \ipR{
 \cT_p^{-1}\cT_{\gamma_0}^{-1}f-f_\infty
 }{v_j}=0,
 \qquad 1\le j\le6.
\end{equation*}
Finally, \(\cT_p\) is an isometry on \(H^2\), and
\begin{equation*}
 \norm[H^2]{\cT_p^{-1}f_\infty-f_\infty}
 +\norm[H^2]{\cT_p^{-1}h_\infty-h_\infty}
 \lesssim\abs p.
\end{equation*}
Together with \eqref{eq:3.9},
\begin{align*}
 &\norm[H^2]{
 \cT_p^{-1}\cT_{\gamma_0}^{-1}f-f_\infty}
 +\norm[H^2]{
 \cT_p^{-1}\cT_{\gamma_0}^{-1}h-h_\infty}\\
 &\qquad\lesssim
 \norm[H^2]{\cT_{\gamma_0}^{-1}f-f_\infty}
 +\norm[H^2]{\cT_{\gamma_0}^{-1}h-h_\infty}.
\end{align*}
\end{proof}

\begin{proposition}\label{prop:3.2}
There are \(\delta>0\) and \(c_2>0\) such that, for \(c>c_2\), every
solution
\((\widetilde f_c,\widetilde h_c,\widetilde\lambda_c)\) of
\eqref{eq:1.2} satisfying
\begin{equation}\label{eq:3.10}
 {\dist_{X_2}}
 \bigl((\widetilde f_c,\widetilde h_c,\widetilde\lambda_c),
       \cO_\infty\times\{\lambda_\infty\}\bigr)<\delta
\end{equation}
has the form
\begin{equation*}
 (\widetilde f_c,\widetilde h_c,\widetilde\lambda_c)
 =(\cT_\gamma f_c,\cT_\gamma h_c,\lambda_c)
 \quad\text{for some }\gamma\in\cH.
\end{equation*}
\end{proposition}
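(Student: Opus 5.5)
Given a solution $(\widetilde f_c,\widetilde h_c,\widetilde\lambda_c)$ of \eqref{eq:1.2} satisfying \eqref{eq:3.10}, we gauge-fix it by the symmetry group and then invoke the local uniqueness of Lemma~\ref{lem:2.10}. First choose $\gamma_0\in\cH$ with
\[
 \norm[H^2]{\cT_{\gamma_0}^{-1}\widetilde f_c-f_\infty}
 +\norm[H^2]{\cT_{\gamma_0}^{-1}\widetilde h_c-h_\infty}
 +\abs{\widetilde\lambda_c-\lambda_\infty}<2\delta.
\]
This is possible because $\cT_{\gamma_0}$ is an isometry on $H^2$ and $\cO_\infty=\cH\cdot(f_\infty,h_\infty)$. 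Taking $2\delta\le\delta_0$, Lemma~\ref{lem:3.1} produces $p$ with $\abs p<\rho$ and
\[
 \ipR{\cT_p^{-1}\cT_{\gamma_0}^{-1}\widetilde f_c-f_\infty}{v_j}=0,
 \qquad 1\le j\le6 .
\]
Set $\gamma_1:=\gamma_0\,(-y,S(a),1)$, so that $\cT_{\gamma_1}=\cT_{\gamma_0}\cT_p$, and define $(f,h):=\cT_{\gamma_1}^{-1}(\widetilde f_c,\widetilde h_c)$. The final estimate in Lemma~\ref{lem:3.1} gives
\[
 \norm[H^2]{f-f_\infty}+\norm[H^2]{h-h_\infty}\lesssim\delta .
\]

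Next we check that $\widetilde z:=(f,h,\widetilde\lambda_c,0)$ is a zero of $\widehat{\cF}_c$. System \eqref{eq:1.2} is invariant under $\cH$. Phases and translations clearly commute with $\cD$ and with the Coulomb convolution. For spinorial rotations, the covariance \eqref{eq:1.9} gives $\cD\cT_\gamma=\cT_\gamma\cD$, and $\abs{\cT_\gamma f}^2=\abs f^2\circ(\text{rigid motion})$, so the convolution term is equivariant; the $L^2$ constraint is preserved as well. Hence $\cF_c(f,h,\widetilde\lambda_c)=0$. The gauge condition says exactly that $\ell_j(f)=0$ for all $j$. With $\bm a=0$ we therefore get $\widehat{\cF}_c(\widetilde z)=0$. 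Moreover,
\[
 \norm[X_2\times\R^6]{\widetilde z-z_\infty}\le C\delta,
\]
where $C$ is independent of $c$.

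To conclude, choose $\delta$ so small that $C\delta<r$, where $r$ is the radius in Lemma~\ref{lem:2.10}, and take $c_2\ge c_1$. Then $\widetilde z\in B_r(z_\infty)$ is a zero of $\widehat{\cF}_c$. The uniqueness part of Lemma~\ref{lem:2.10} forces $\widetilde z=\widetilde z_c=(f_c,h_c,\lambda_c,0)$, using $\bm a_c=0$ from Proposition~\ref{prop:2.11}. Consequently $\widetilde\lambda_c=\lambda_c$ and
\[
 (\widetilde f_c,\widetilde h_c)=\cT_{\gamma_1}(f_c,h_c),
\]
so $\gamma=\gamma_1$ works.

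The main obstacle is making the constants uniform, and it is where the dependence on $c$ must be tracked carefully. We need $\delta$ independent of $c$, and we need the uniqueness radius $r$ in Lemma~\ref{lem:2.10} to be valid for all $c>c_1$; that lemma asserts exactly this, since the contraction estimate \eqref{eq:2.24} holds uniformly on the fixed ball $B_r(z_\infty)$. A secondary point is the equivariance of \eqref{eq:1.2} under $\cT_\gamma$ with the given group law. This is where one verifies that the $h$-component transforms by the same $S$ as $f$, which is what \eqref{eq:1.9} guarantees when $\sigma\cdot\nabla$ is conjugated by $S$ combined with the rotation $R_S$ of the coordinates.
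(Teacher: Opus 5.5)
Your proposal is correct and follows essentially the same route as the paper. You gauge-fix via $\gamma_0$ and the slice parameter $p$ from Lemma~\ref{lem:3.1}, use $\cH$-equivariance of $\cF_c$ and the orthogonality conditions $\ell_j=0$ to produce a zero of $\widehat{\cF}_c$ with $\bm a=0$ inside $B_r(z_\infty)$ (with $\delta$ independent of $c$), and conclude by the uniqueness in Lemma~\ref{lem:2.10} together with $\bm a_c=0$. Your explicit checks that $\cT_{\gamma_0}\cT_p=\cT_{\gamma_0(-y,S(a),1)}$ under the group law \eqref{eq:2.4}, and that $\cD$ commutes with $\cT_\gamma$ via \eqref{eq:1.9}, supply details the paper only asserts.
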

\begin{proof}
Let \(r>0\) be as in \cref{lem:2.10}.  Increase \(c_2\) so
that, for every \(c>c_2\),
\begin{equation*}
 (f_c,h_c,\lambda_c,\bm a_c)
 =(f_c,h_c,\lambda_c,0)\in B_r(z_\infty).
\end{equation*}
By the proof of \cref{lem:3.1}, the corresponding parameter
\(p\) satisfies
\begin{equation*}
 \abs p
 \lesssim
 \norm[H^2]{\cT_{\gamma_0}^{-1}f-f_\infty},
\end{equation*}
and, for \(\abs p<\rho\),
\begin{equation*}
 \norm[H^2]{\cT_p^{-1}f_\infty-f_\infty}
 +\norm[H^2]{\cT_p^{-1}h_\infty-h_\infty}
 \lesssim \abs p.
\end{equation*}
Since \(\cT_p\) is an isometry on \(H^2\),
\begin{align*}
 &\norm[X_2\times\R^6]{
 (\cT_p^{-1}\cT_{\gamma_0}^{-1}f,
  \cT_p^{-1}\cT_{\gamma_0}^{-1}h,
  \lambda,0)-z_\infty}
 \\
 &\quad\lesssim
 \norm[H^2]{\cT_{\gamma_0}^{-1}f-f_\infty}
 +\norm[H^2]{\cT_{\gamma_0}^{-1}h-h_\infty}
 +\abs{\lambda-\lambda_\infty}.
\end{align*}
Thus there is \(C\ge1\), independent of \(c\), such that
\begin{align}
 &\norm[H^2]{\cT_{\gamma_0}^{-1}f-f_\infty}
 +\norm[H^2]{\cT_{\gamma_0}^{-1}h-h_\infty}
 \notag\\
 &\qquad\le
 C\norm[H^2\times H^2\times\R]{
 (\cT_{\gamma_0}^{-1}f-f_\infty,
  \cT_{\gamma_0}^{-1}h-h_\infty,
  \lambda-\lambda_\infty)},
 \label{eq:3.11}\\
 &\norm[X_2\times\R^6]{
 (\cT_p^{-1}\cT_{\gamma_0}^{-1}f,
  \cT_p^{-1}\cT_{\gamma_0}^{-1}h,
  \lambda,0)-z_\infty}
 \notag\\
 &\qquad\le
 C\norm[H^2\times H^2\times\R]{
 (\cT_{\gamma_0}^{-1}f-f_\infty,
  \cT_{\gamma_0}^{-1}h-h_\infty,
  \lambda-\lambda_\infty)}.
 \label{eq:3.12}
\end{align}
Choose \(\delta>0\) such that
\begin{equation*}
 2C\delta<\min\{\delta_0,r\}.
\end{equation*}
Let
\((\widetilde f_c,\widetilde h_c,\widetilde\lambda_c)\)
satisfy \eqref{eq:3.10}.  There is \(\gamma_0\in\cH\) such that
\begin{equation}
 \left\|
 \left(
 \cT_{\gamma_0}^{-1}\widetilde f_c-f_\infty,
 \cT_{\gamma_0}^{-1}\widetilde h_c-h_\infty,
 \widetilde\lambda_c-\lambda_\infty
 \right)
 \right\|_{H^2\times H^2\times\R}
 <2\delta.
 \label{eq:3.13}
\end{equation}
By \eqref{eq:3.11} and
\eqref{eq:3.13},
\begin{equation*}
 \norm[H^2]{\cT_{\gamma_0}^{-1}\widetilde f_c-f_\infty}
 +\norm[H^2]{\cT_{\gamma_0}^{-1}\widetilde h_c-h_\infty}
 <2C\delta<\delta_0.
\end{equation*}
Applying \cref{lem:3.1}, we obtain \(p\in\R^6\),
\(\abs p<\rho\), such that
\begin{equation}
 \ipR{
 \cT_p^{-1}\cT_{\gamma_0}^{-1}\widetilde f_c-f_\infty
 }{v_j}=0,
 \qquad 1\le j\le6.
 \label{eq:3.14}
\end{equation}
Set
\begin{equation*}
 \overline f_c
 :=\cT_p^{-1}\cT_{\gamma_0}^{-1}\widetilde f_c,
 \qquad
 \overline h_c
 :=\cT_p^{-1}\cT_{\gamma_0}^{-1}\widetilde h_c.
\end{equation*}
By \eqref{eq:3.12} and
\eqref{eq:3.13},
\begin{equation}
 \norm[X_2\times\R^6]{
 (\overline f_c,\overline h_c,\widetilde\lambda_c,0)-z_\infty}
 <2C\delta<r.
 \label{eq:3.15}
\end{equation}
For \(\gamma=(y,S,e^{\ii\theta})\in\cH\), it's easy to see
\begin{equation}
 \cF_c(\cT_\gamma f,\cT_\gamma h,\lambda)
 =
 \operatorname{diag}(\cT_\gamma,\cT_\gamma,1)
 \cF_c(f,h,\lambda).
 \label{eq:3.16}
\end{equation}
Since
\begin{equation*}
 \cF_c(\widetilde f_c,\widetilde h_c,
 \widetilde\lambda_c)=0,
\end{equation*}
\eqref{eq:3.16} gives
\begin{equation}
 \cF_c(\overline f_c,\overline h_c,
 \widetilde\lambda_c)=0.
 \label{eq:3.17}
\end{equation}
Moreover, \eqref{eq:3.14} yields
\begin{equation*}
 \ell_j(\overline f_c)
 =\ipR{\overline f_c-f_\infty}{v_j}=0,
 \qquad 1\le j\le6.
\end{equation*}
Combining this with
\eqref{eq:3.17}, we obtain
\begin{equation}
 \widehat{\cF}_c(
 \overline f_c,\overline h_c,
 \widetilde\lambda_c,0)=0.
 \label{eq:3.18}
\end{equation}
By \eqref{eq:3.15},
\eqref{eq:3.18}, and the uniqueness in
\cref{lem:2.10},
\begin{equation*}
 (\overline f_c,\overline h_c,
 \widetilde\lambda_c,0)
 =(f_c,h_c,\lambda_c,\bm a_c).
\end{equation*}
Therefore
\begin{equation*}
 (\widetilde f_c,\widetilde h_c)
 =\cT_{\gamma_0}\cT_p(f_c,h_c).
\end{equation*}
There exists \(\gamma\in\cH\) such that
\begin{equation*}
 \cT_{\gamma_0}\cT_p=\cT_\gamma.
\end{equation*}
Hence
\begin{equation*}
 (\widetilde f_c,\widetilde h_c,\widetilde\lambda_c)
 =
 (\cT_\gamma f_c,\cT_\gamma h_c,\lambda_c).
\end{equation*}
\end{proof}

}

{
\begin{lemma}\label{lem:3.3}
Let \(u=(f,g)^T\in\cS\) be a constrained critical point of \(\cE_c\),
with multiplier \(\omega\) which satisfies
\[|\omega|<mc^2.\] Then
\begin{align}
 \cE_c(u)
 &=mc^2\int_{\R^3}u^\dagger\beta u\,\dd x
 =mc^2\left(\norm[L^2]{f}^2-\norm[L^2]{g}^2\right),
 \label{eq:3.19}\\
 mc^2-\omega
 &=2mc^2\norm[L^2]{g}^2
 +\frac12\int _{\R^3\times\R^3}
 \frac{\abs{u(x)}^2\abs{u(y)}^2}{\abs{x-y}}\,\dd x\dd y.
 \label{eq:3.20}
\end{align}
\end{lemma}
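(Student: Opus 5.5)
The plan is to combine a dilation (Pohozaev/virial) identity with the Euler--Lagrange equation tested against \(u\) itself. Write
\[
 K(u):=\int_{\R^3}u^\dagger(-\ii c\,\alpha\cdot\nabla)u\,\dd x,\qquad
 M(u):=mc^2\int_{\R^3}u^\dagger\beta u\,\dd x,\qquad
 W(u):=\int_{\R^3\times\R^3}\frac{\abs{u(x)}^2\abs{u(y)}^2}{\abs{x-y}}\,\dd x\dd y .
\]
Then \(\cE_c(u)=K(u)+M(u)-\tfrac12W(u)\). The constrained critical point equation reads \(\mathscr D_cu-(\abs{x}^{-1}\ast\abs u^2)u=\omega u\), equivalently
\[
 \mathrm d\cE_c(u)[\xi]=2\omega\ipR{u}{\xi}\qquad\text{for all }\xi\in H^{1/2}.
\]

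First I would establish enough regularity to justify the computations. Since \(\abs\omega<mc^2\), the number \(\omega\) lies in the spectral gap of \(\mathscr D_c\). Hence \((\mathscr D_c-\omega)^{-1}\) is bounded from \(H^{s}\) to \(H^{s+1}\) for every \(s\); this is the only place the hypothesis \(\abs\omega<mc^2\) enters. Writing \(u=(\mathscr D_c-\omega)^{-1}\bigl[(\abs{x}^{-1}\ast\abs u^2)u\bigr]\) and using \cref{lem:2.2} with \(s=\tfrac12\) gives \(u\in H^{3/2}\). Iterating gives \(u\in H^s\) for all \(s\), so in particular \(x\cdot\nabla u\) is handled by a standard truncation (or we may note \(u\in H^2\) suffices for the generator below to pair with \(\mathrm d\cE_c(u)\)). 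I expect this justification step, rather than the algebra, to be the main technical point. If weighted integrability of \(x\cdot\nabla u\) becomes an issue, I would instead take the dilation derivative with a cutoff \(\chi(x/R)\) and let \(R\to\infty\).

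Next consider the \(L^2\)-preserving dilation \(u_t(x):=t^{3/2}u(tx)\), so that \(u_t\in\cS\). Direct change of variables gives
\[
 K(u_t)=tK(u),\qquad M(u_t)=M(u),\qquad W(u_t)=tW(u),
\]
hence \(\cE_c(u_t)=t\bigl(K-\tfrac12W\bigr)+M\). The generator \(\xi:=\frac{\dd}{\dd t}u_t|_{t=1}=\tfrac32u+x\cdot\nabla u\) satisfies \(\ipR{u}{\xi}=\tfrac12\frac{\dd}{\dd t}\norm[L^2]{u_t}^2|_{t=1}=0\). The critical point relation therefore gives \(\frac{\dd}{\dd t}\cE_c(u_t)|_{t=1}=0\), that is \(K(u)=\tfrac12W(u)\). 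Consequently \(\cE_c(u)=M(u)=mc^2\bigl(\norm[L^2]{f}^2-\norm[L^2]{g}^2\bigr)\), which is \eqref{eq:3.19}.

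Finally, pair the equation with \(u\) and use \(\norm[L^2]{u}=1\) to obtain \(K+M-W=\omega\). Substituting \(K=\tfrac12W\) gives \(\omega=M-\tfrac12W\). Since \(\norm[L^2]{f}^2+\norm[L^2]{g}^2=1\), we have \(M=mc^2(1-2\norm[L^2]{g}^2)\), and so
\[
 mc^2-\omega=2mc^2\norm[L^2]{g}^2+\tfrac12W(u),
\]
which is \eqref{eq:3.20}.
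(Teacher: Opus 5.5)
Your argument is correct and is essentially the paper's: a Pohozaev/dilation identity combined with the equation tested against $u$. The only difference in bookkeeping is that the paper tests with $x\cdot\nabla u$ and then eliminates $\omega$ using $K+M-W=\omega$, whereas your $L^2$-preserving generator $\tfrac32u+x\cdot\nabla u$ kills the $\omega$ term at once and gives $K=\tfrac12W$ directly.

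One caveat concerns the justification step, and your parenthetical shortcut does not work there. Knowing $u\in H^2$, or even $u\in\bigcap_s H^s$, gives no control of $x\cdot\nabla u$. The paper instead proves exponential decay of $u$ and $\nabla u$, and that decay argument, not the regularity bootstrap (which only needs invertibility of $\mathscr D_c$), is where $\abs{\omega}<mc^2$ is used. Your cutoff alternative also works. For the Coulomb term you can pass to the limit after symmetrizing in $(x,y)$: the map $x\mapsto\chi(x/R)x$ is Lipschitz uniformly in $R$, which gives the integrable majorant $C\abs{u(x)}^2\abs{u(y)}^2/\abs{x-y}$.
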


\begin{proof}
By \cref{lem:2.2}, $u\in\bigcap\limits_{s>0}H^s$.
Since $|\omega|<mc^2$, Kato's inequality and an exterior comparison
argument, followed by
interior elliptic estimates, yield
$|u(x)|+|\nabla u(x)|\le Ce^{-a|x|}$ for some $C,a>0$.
Test with \(x\cdot\nabla u\) gives
\begin{align*}
 \Re\ip{\mathscr{D}_cu}{x\cdot\nabla u}
 &=-\ip{\mathscr{D}_cu}{u}-\frac12mc^2\ip{\beta u}{u},\\
 \Re\ip{\omega u}{x\cdot\nabla u}
 &=-\frac32\omega,\\
 2\int_{\R^3}\abs{u(x)}^2x\cdot\nabla
 \bigl(\abs{x}^{-1}\ast \abs u^2\bigr)(x)\,\dd x
 &=-\int _{\R^3\times\R^3}
 \frac{\abs{u(x)}^2\abs{u(y)}^2}{\abs{x-y}}\,\dd x\dd y,\\
 \Re\int_{\R^3}
 \bigl(\abs{x}^{-1}\ast \abs u^2\bigr)u^\dagger x\cdot\nabla u\,\dd x
 &=-\frac54\int _{\R^3\times\R^3}
 \frac{\abs{u(x)}^2\abs{u(y)}^2}{\abs{x-y}}\,\dd x\dd y.
\end{align*}
Hence
\begin{align*}
 \ip{\mathscr{D}_cu}{u}+\frac12mc^2\ip{\beta u}{u}
 =\frac54\int _{\R^3\times\R^3}
 \frac{\abs{u(x)}^2\abs{u(y)}^2}{\abs{x-y}}\,\dd x\dd y
 +\frac32\omega,
\end{align*}
\[
 \ip{\mathscr{D}_cu}{u}
 -\int _{\R^3\times\R^3}
 \frac{\abs{u(x)}^2\abs{u(y)}^2}{\abs{x-y}}\,\dd x\dd y
 =\omega.
\]
Eliminating \(\omega\),
\begin{equation*}
 c\Re\int_{\R^3}u^\dagger(-\ii\alpha\cdot\nabla)u\,\dd x
 =\frac12\int _{\R^3\times\R^3}
 \frac{\abs{u(x)}^2\abs{u(y)}^2}{\abs{x-y}}\,\dd x\dd y.
\end{equation*}
Therefore
\begin{align*}
 \cE_c(u)
 &=mc^2\int_{\R^3}u^\dagger\beta u\,\dd x
 =mc^2\left(\norm[L^2]{f}^2-\norm[L^2]{g}^2\right),\\
 mc^2-\omega
 &=mc^2-mc^2\int_{\R^3}u^\dagger\beta u\,\dd x
 +\frac12\int _{\R^3\times\R^3}
 \frac{\abs{u(x)}^2\abs{u(y)}^2}{\abs{x-y}}\,\dd x\dd y\\
 &=2mc^2\norm[L^2]{g}^2
 +\frac12\int _{\R^3\times\R^3}
 \frac{\abs{u(x)}^2\abs{u(y)}^2}{\abs{x-y}}\,\dd x\dd y.
\end{align*}
\end{proof}

}

\begin{lemma}\label{lem:3.4}
Let \(f_c,h_c\in H^1(\R^3,\C^2)\) satisfy
\begin{equation*}
 \norm[L^2]{f_c}^2+c^{-2}\norm[L^2]{h_c}^2=1.
\end{equation*}
Then
\begin{align}
 \cE_c\left(\binom{f_c}{c^{-1}h_c}\right)-mc^2
 ={}&\cE_\infty(f_c)
 -2m\norm[L^2]{
 h_c-\frac1{2m}\cD f_c}^2
 \notag\\
 &-c^{-2}\int _{\R^3\times\R^3}
 \frac{\abs{f_c(x)}^2\abs{h_c(y)}^2}{\abs{x-y}}\,
 \dd x\dd y
 \notag\\
 &-\frac{c^{-4}}2
 \int _{\R^3\times\R^3}
 \frac{\abs{h_c(x)}^2\abs{h_c(y)}^2}{\abs{x-y}}\,
 \dd x\dd y.
 \label{eq:3.21}
\end{align}
Moreover, for \(u_c=(f_c,g_c)^T\in\cG_c\), set
\begin{equation*}
 h_c:=cg_c,
 \qquad
 \widehat f_c:=\frac{f_c}{\norm[L^2]{f_c}}.
\end{equation*}
Then
\begin{equation}
 \sup_{u_c=(f_c,g_c)^T\in\cG_c}
 \abs{
 \cE_c(u_c)-mc^2-\cE_\infty(\widehat f_c)}
 \lesssim c^{-2}.
 \label{eq:3.22}
\end{equation}
\end{lemma}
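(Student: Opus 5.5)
The identity \eqref{eq:3.21} is pure algebra, and \eqref{eq:3.22} then follows by bounding each correction term with the uniform estimates of Proposition~\ref{prop:2.1}.

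\textbf{Step 1: the identity \eqref{eq:3.21}.} Write \(u=(f_c,c^{-1}h_c)^T\). Since \(\mathscr D_c\) acts blockwise through \(c\cD\) off the diagonal and \(\pm mc^2\) on it,
\[
 \int_{\R^3}u^\dagger\mathscr D_cu\,\dd x
 =2\Re\ip{h_c}{\cD f_c}+mc^2\norm[L^2]{f_c}^2-m\norm[L^2]{h_c}^2 .
\]
Substituting the constraint \(mc^2\norm[L^2]{f_c}^2=mc^2-m\norm[L^2]{h_c}^2\) turns the right-hand side into
\[
 mc^2+2\Re\ip{h_c}{\cD f_c}-2m\norm[L^2]{h_c}^2 .
\]
Completing the square, and using \(\norm[L^2]{\cD f_c}=\norm[L^2]{\nabla f_c}\) (Plancherel and \((\sigma\cdot\xi)^2=\abs\xi^2I_2\)), gives
\[
 mc^2+\frac1{2m}\norm[L^2]{\nabla f_c}^2-2m\norm[L^2]{h_c-\tfrac1{2m}\cD f_c}^2 .
\]
For the interaction term, insert \(\abs u^2=\abs{f_c}^2+c^{-2}\abs{h_c}^2\) into the quadratic Coulomb form and expand. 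This produces the \(\abs{f_c}^2\)–\(\abs{f_c}^2\) term belonging to \(\cE_\infty(f_c)\), the cross term with coefficient \(c^{-2}\), and the \(c^{-4}/2\) term. Collecting everything gives exactly \eqref{eq:3.21}. Here \(\cE_\infty\) is understood as the formula \eqref{eq:1.6} evaluated at the non-normalized \(f_c\).

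\textbf{Step 2: the correction terms for ground states.} Let \(u_c\in\cG_c\) and \(h_c=cg_c\). By Proposition~\ref{prop:2.1}(2), \(\norm[H^s]{f_c}+\norm[H^s]{h_c}\) is bounded uniformly over \(c>c_0\) and \(\cG_c\), and by Proposition~\ref{prop:2.1}(3) \(\lambda_c\) is bounded. The second equation of \eqref{eq:1.2} gives
\[
 2m h_c-\cD f_c=c^{-2}\bigl\{\lambda_c-\abs{x}^{-1}\ast(\abs{f_c}^2+c^{-2}\abs{h_c}^2)\bigr\}h_c .
\]
By Lemma~\ref{lem:2.2} the right-hand side is \(O(c^{-2})\) in \(L^2\), uniformly on \(\cG_c\). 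Hence the square term in \eqref{eq:3.21} is \(O(c^{-4})\). The two remaining Coulomb terms are bounded, via Hardy–Littlewood–Sobolev (or Hardy's inequality) and the uniform \(H^1\) bounds, by \(Cc^{-2}\) and \(Cc^{-4}\) respectively. Therefore
\[
 \abs{\cE_c(u_c)-mc^2-\cE_\infty(f_c)}\lesssim c^{-2}
 \quad\text{uniformly on }\cG_c .
\]

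\textbf{Step 3: passing to \(\widehat f_c\).} The constraint gives \(t_c:=\norm[L^2]{f_c}=(1-c^{-2}\norm[L^2]{h_c}^2)^{1/2}=1+O(c^{-2})\), uniformly. Since \(f_c=t_c\widehat f_c\) and \(\cE_\infty(tf)=t^2K(f)-t^4V(f)\), with \(K\) the kinetic part and \(V\) the Coulomb part of \eqref{eq:1.6}, we get
\[
 \abs{\cE_\infty(f_c)-\cE_\infty(\widehat f_c)}
 \le\bigl(\abs{t_c^2-1}+\abs{t_c^4-1}\bigr)\bigl(K(\widehat f_c)+V(\widehat f_c)\bigr)\lesssim c^{-2},
\]
because \(K(\widehat f_c)+V(\widehat f_c)\) is bounded by the uniform \(H^1\) bounds. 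Combined with Step 2, this proves \eqref{eq:3.22}.

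The only delicate point is uniformity over all of \(\cG_c\), not just the radial branch. This is supplied entirely by the uniform bounds in Proposition~\ref{prop:2.1}(2)–(3), so no real obstacle is expected.
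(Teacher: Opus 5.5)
Your proposal is correct and matches the paper's proof essentially step for step. You obtain \eqref{eq:3.21} the same way: expand the quadratic form, use the constraint, complete the square with $\|\mathcal D f_c\|_{L^2}=\|\nabla f_c\|_{L^2}$, and expand the Coulomb term. You then make the square term $O(c^{-4})$ via the second equation of \eqref{eq:1.2} and Lemma~\ref{lem:2.2}, and pass from $\mathcal E_\infty(f_c)$ to $\mathcal E_\infty(\widehat f_c)$ by the identical scaling comparison with $\|f_c\|_{L^2}^2=1+O(c^{-2})$.
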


\begin{proof}
A direct computation gives
\begin{align*}
 \cE_c\left(\binom{f_c}{c^{-1}h_c}\right)
 ={}&mc^2\norm[L^2]{f_c}^2
 -m\norm[L^2]{h_c}^2
 +2\ipR{\cD f_c}{h_c}\\
 &-\frac12
 \int _{\R^3\times\R^3}
 \frac{
 \bigl(\abs{f_c(x)}^2+c^{-2}\abs{h_c(x)}^2\bigr)
 \bigl(\abs{f_c(y)}^2+c^{-2}\abs{h_c(y)}^2\bigr)}
 {\abs{x-y}}\,
 \dd x\dd y.
\end{align*}
By
\[
 mc^2\left(\norm[L^2]{f_c}^2-1\right)
 =-m\norm[L^2]{h_c}^2
\]
and
\[
 \norm[L^2]{\cD f_c}^2
 =\norm[L^2]{\nabla f_c}^2,
\]
we have
\begin{align*}
 &mc^2\norm[L^2]{f_c}^2
 -m\norm[L^2]{h_c}^2
 -mc^2
 +2\ipR{\cD f_c}{h_c}\\
 &\qquad
 =-2m\norm[L^2]{h_c}^2
 +2\ipR{\cD f_c}{h_c}\\
 &\qquad
 =\frac1{2m}\norm[L^2]{\cD f_c}^2
 -2m\norm[L^2]{
 h_c-\frac1{2m}\cD f_c}^2\\
 &\qquad
 =\frac1{2m}\norm[L^2]{\nabla f_c}^2
 -2m\norm[L^2]{
 h_c-\frac1{2m}\cD f_c}^2.
\end{align*}
Moreover,
\begin{align*}
 &-\frac12
 \int _{\R^3\times\R^3}
 \frac{
 \bigl(\abs{f_c(x)}^2+c^{-2}\abs{h_c(x)}^2\bigr)
 \bigl(\abs{f_c(y)}^2+c^{-2}\abs{h_c(y)}^2\bigr)}
 {\abs{x-y}}\,
 \dd x\dd y\\
 &\quad=
 -\frac12
 \int _{\R^3\times\R^3}
 \frac{\abs{f_c(x)}^2\abs{f_c(y)}^2}{\abs{x-y}}\,
 \dd x\dd y\\
 &\qquad
 -c^{-2}
 \int _{\R^3\times\R^3}
 \frac{\abs{f_c(x)}^2\abs{h_c(y)}^2}{\abs{x-y}}\,
 \dd x\dd y\\
 &\qquad
 -\frac{c^{-4}}2
 \int _{\R^3\times\R^3}
 \frac{\abs{h_c(x)}^2\abs{h_c(y)}^2}{\abs{x-y}}\,
 \dd x\dd y.
\end{align*}
Hence \eqref{eq:3.21} follows.

The second
equation in \eqref{eq:1.2} gives
\begin{equation}
 h_c-\frac1{2m}\cD f_c
 =
 \frac{c^{-2}}{2m}
 \left\{
 \lambda_c-
 \abs{x}^{-1}\ast
 \left(\abs{f_c}^2+c^{-2}\abs{h_c}^2\right)
 \right\}h_c.
 \label{eq:3.23}
\end{equation}
By
\eqref{eq:2.1},
\begin{align*}
 &\norm[L^2]{
 h_c-\frac1{2m}\cD f_c}\\
 &\quad\le
 \frac{c^{-2}}{2m}
 \left[
 \lambda_c\norm[L^2]{h_c}
 +
 \norm[L^2]{
 \left(
 \abs{x}^{-1}\ast \abs{f_c}^2
 \right)h_c}
 +
 c^{-2}\norm[L^2]{
 \left(
 \abs{x}^{-1}\ast \abs{h_c}^2
 \right)h_c}
 \right]\\
 &\quad\lesssim c^{-2}.
\end{align*}
Therefore
\begin{equation}
 \sup_{u_c\in\cG_c}
 \norm[L^2]{
 h_c-\frac1{2m}\cD f_c}^2
 \lesssim c^{-4}.
 \label{eq:3.24}
\end{equation}
It follows from \eqref{eq:3.21} and
\eqref{eq:3.24} that
\begin{equation}
 \sup_{u_c\in\cG_c}
 \abs{
 \cE_c(u_c)-mc^2-\cE_\infty(f_c)}
 \lesssim c^{-2}.
 \label{eq:3.25}
\end{equation}
It's easy to see
\begin{equation*}
 \norm[L^2]{f_c}^2
 =1-c^{-2}\norm[L^2]{h_c}^2
 =1+O(c^{-2}),
\end{equation*}
uniformly for \(u_c\in\cG_c\).  Thus, for all sufficiently large \(c\),
\begin{equation*}
 \norm[L^2]{f_c}^2\ge\frac12,
 \qquad
 \abs{\norm[L^2]{f_c}^{-2}-1}
 +\abs{\norm[L^2]{f_c}^{-4}-1}
 \lesssim c^{-2}.
\end{equation*}
Moreover,
\begin{align*}
 \cE_\infty(\widehat f_c)-\cE_\infty(f_c)
 ={}&
 \frac{\norm[L^2]{f_c}^{-2}-1}{2m}
 \norm[L^2]{\nabla f_c}^2\\
 &-\frac{\norm[L^2]{f_c}^{-4}-1}{2}
 \int _{\R^3\times\R^3}
 \frac{\abs{f_c(x)}^2\abs{f_c(y)}^2}{\abs{x-y}}\,
 \dd x\dd y.
\end{align*}
and hence
\begin{equation}
 \sup_{u_c\in\cG_c}
 \abs{
 \cE_\infty(\widehat f_c)-\cE_\infty(f_c)}
 \lesssim c^{-2}.
 \label{eq:3.26}
\end{equation}
Combining \eqref{eq:3.25} and
\eqref{eq:3.26} gives
\eqref{eq:3.22}.
\end{proof}

{
{The next proposition follows from} \cref{prop:2.1} and \cref{lem:2.4}.
}

\begin{proposition}
\label{prop:3.5}
For each \(u_c=(f_c,g_c)^T\in\cG_c\), let \(\omega_c\) be its multiplier.
Then
\begin{equation*}
 \begin{aligned}
 \sup_{u_c=(f_c,g_c)^T\in\cG_c}\inf_{\gamma\in\cH}
 \bigl(&\norm[H^2]{f_c-\cT_\gamma f_\infty}
 +\norm[H^2]{cg_c-\cT_\gamma h_\infty}\\
 &+\abs{mc^2-\omega_c-\lambda_\infty}\bigr)
 \longrightarrow0
 \qquad\text{as }c\to\infty.
 \end{aligned}
\end{equation*}
\end{proposition}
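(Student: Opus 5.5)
We derive the statement from Proposition~\ref{prop:2.1}(2)--(3), Lemma~\ref{lem:2.4}, and a check that $\mathcal H$ acts transitively on the relevant limiting set. The $\lambda$-component is immediate: Proposition~\ref{prop:2.1}(3) gives $mc^2-\omega_c\to\lambda_\infty$. The Remark after that proposition says this holds for the whole family, so the convergence is uniform over $\cG_c$ and no subsequence is needed. What remains is to control the spinor components modulo $\cH$, uniformly over $\cG_c$.

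First I rewrite the targets in terms of the orbit. By Lemma~\ref{lem:2.4}, every element of $\cG_\infty$ has the form $Q(\cdot-y)\xi$ with $\abs\xi=1$. Since $SU(2)$ acts transitively on $\mathbb S^3$, we may choose $S$ with $S\mathbf e_1=\xi$. Radiality of $Q$ then gives $Q(\cdot-y)\xi=\cT_{(y,S,1)}f_\infty$. Covariance \eqref{eq:1.9} shows that $\cD$ commutes with $\cT_\gamma$, so
\[
\tfrac1{2m}\cD\bigl(\cT_\gamma f_\infty\bigr)=\cT_\gamma h_\infty .
\]
Hence $\cG_\infty=\{\cT_\gamma f_\infty\}$ and $\frac1{2m}\cD\cG_\infty=\{\cT_\gamma h_\infty\}$, with the same $\gamma$ in both. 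This matches the displayed description of $\cH\cdot(f_\infty,h_\infty)$.

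The main obstacle is that Proposition~\ref{prop:2.1}(3) controls the distances of $f_c$ and of $cg_c$ to the limiting sets \emph{separately}. We need a single $\gamma$ that works for both components at once. To get it, I pick $\gamma_c$ so that $\norm[H^2]{f_c-\cT_{\gamma_c}f_\infty}\le \dist_{H^2}(f_c,\cG_\infty)+c^{-1}$. Using the second equation of \eqref{eq:1.2}, in the form \eqref{eq:3.23}, together with Lemma~\ref{lem:2.2} and the uniform $H^s$ bounds of Proposition~\ref{prop:2.1}(2), I estimate
\[
\norm[H^2]{cg_c-\cT_{\gamma_c}h_\infty}\le \tfrac1{2m}\norm[H^2]{\cD(f_c-\cT_{\gamma_c}f_\infty)}+\norm[H^2]{cg_c-\tfrac1{2m}\cD f_c}\lesssim \norm[H^3]{f_c-\cT_{\gamma_c}f_\infty}+c^{-2}.
\]
This loses one derivative. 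I compensate by choosing $\gamma_c$ nearly optimal in $H^3$ rather than $H^2$. Proposition~\ref{prop:2.1}(3) holds for every $s>0$, so $\dist_{H^3}(f_c,\cG_\infty)\to0$ uniformly over $\cG_c$.

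Combining the two estimates, the quantity inside the infimum at $\gamma=\gamma_c$ is bounded by $C\bigl(\sup_{\cG_c}\dist_{H^3}(f_c,\cG_\infty)+c^{-1}+c^{-2}\bigr)+\sup_{\cG_c}\abs{mc^2-\omega_c-\lambda_\infty}$, and this tends to $0$. Every constant involved is uniform over $\cG_c$ by Proposition~\ref{prop:2.1}(2) and Lemma~\ref{lem:2.2}. This gives the claimed uniform convergence.
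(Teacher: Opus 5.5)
Your proposal is correct and follows the route the paper indicates; the paper only says the result follows from \cref{prop:2.1} and \cref{lem:2.4}. What you add is the step the paper leaves implicit: a single $\gamma$ must control $f_c$ and $cg_c$ at once, and you handle this correctly via \eqref{eq:3.23}, the commutation $\cD\cT_\gamma=\cT_\gamma\cD$, and an $H^3$-near-optimal choice of $\gamma_c$ that absorbs the lost derivative. If you prefer not to rely on the uniform reading of the Remark, you can get the uniform bound on $\lambda_c$ needed in \eqref{eq:3.23}, and then the uniform convergence $\lambda_c\to\lambda_\infty$, by pairing the first equation of \eqref{eq:1.2} with $f_c$ and using \cref{prop:2.1}(2) together with your common $\gamma_c$.
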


{

}
By \cref{prop:3.5} and \cref{prop:3.2},
\begin{equation*}
 \cG_c\subset\{\cT_\gamma u_c:\gamma\in\cH\}.
\end{equation*}
For \(\widetilde u_c\in\cG_c\),
\begin{equation*}
 \widetilde u_c=\cT_{\gamma_c}u_c
 \quad\text{for some }\gamma_c\in\cH.
\end{equation*}
The invariance of \eqref{eq:1.1}, and \eqref{eq:1.4}
gives
\begin{equation}
 u_c\in\cG_c,
 \qquad
 \cG_c=\{\cT_\gamma u_c:\gamma\in\cH\}.
 \label{eq:3.27}
\end{equation}
{

\begin{lemma}\label{lem:3.6}
For all sufficiently large $c$, the radial profiles in
\cref{thm:1.1} satisfy
\begin{equation*}
 F_c(r)>0\quad(r\ge0),
 \qquad
 G_c(0)=0,
 \qquad
 G_c(r)<0\quad(r>0).
\end{equation*}
Moreover, $(F_c,G_c)$ is the unique pair of real radial profiles of a
ground state satisfying $F_c>0$.
\end{lemma}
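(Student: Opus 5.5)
We will prove the sign properties of the radial profiles $(F_c,G_c)$ from \cref{prop:2.11} using the ODE system \eqref{eq:2.25}, and then deduce uniqueness of the real radial representative from \eqref{eq:3.27}.

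\emph{Regularity at the origin.} By \cref{lem:2.12} and Sobolev embedding, $F_c,G_c$ are smooth as functions of $x$. The lower component $-\ii G_c(\abs x)(\sigma\cdot\widehat x)\mathbf e_1$ is continuous at $x=0$ only if $G_c(r)\to0$ as $r\to0$, because $\sigma\cdot\widehat x$ has no limit there. Hence $G_c(0)=0$.

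\emph{Sign of $G_c$ for $r>0$.} Write $V_c:=\abs{x}^{-1}\ast(F_c^2+c^{-2}G_c^2)$. The first equation of \eqref{eq:2.25} reads
\[
 (r^2G_c)'=r^2\{\lambda_c-V_c(r)\}F_c .
\]
Since $G_c(0)=0$, this integrates to
\[
 r^2G_c(r)=\int_0^r s^2(\lambda_c-V_c(s))F_c(s)\,\dd s .
\]
By Newton's theorem, $V_c$ is radial and strictly decreasing with $V_c(r)\to0$ as $r\to\infty$. It converges uniformly to $V_\infty=\abs{x}^{-1}\ast Q^2$. For the limit, $Q'<0$ on $(0,\infty)$ by \cite{Lieb1977}, and $Q'\sim -Q''(0)\,r$ near $0$ with $Q''(0)<0$. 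This forces $V_\infty(0)>\lambda_\infty$. So there is a unique $r_\infty$ with $V_\infty(r_\infty)=\lambda_\infty$, and for large $c$ there is a unique $r_c\to r_\infty$ with $V_c(r_c)=\lambda_c$.

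Suppose first that $F_c>0$. Then the integrand is negative on $(0,r_c)$ and positive on $(r_c,\infty)$, so $r^2G_c$ decreases and then increases. At infinity, $r^2G_c(r)\to0$: this follows from exponential decay, as in the proof of \cref{lem:3.3}. Hence $r^2G_c<0$ on $(0,\infty)$, i.e. $G_c<0$ for $r>0$.

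\emph{Positivity of $F_c$.} The second equation of \eqref{eq:2.25} gives $F_c'=\{2m-c^{-2}\lambda_c+c^{-2}V_c\}G_c$, and the bracket is positive for large $c$. So $F_c$ is monotone wherever $G_c$ has a fixed sign. To avoid circularity between the two sign arguments, I first show $F_c>0$ on a fixed ball $[0,R_0]$. This comes from uniform convergence $F_c\to Q$, since $Q\ge\min_{[0,R_0]}Q>0$ there.

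I then run a continuity argument on $(R_0,\infty)$. Choose $R_0>r_\infty+1$, so $\lambda_c-V_c>0$ on $[R_0,\infty)$. Let $r_*$ be the first zero of $F_c$ beyond $R_0$. On $(0,r_*)$ the argument above gives $r^2G_c$ decreasing on $(0,r_c)$, increasing on $(r_c,r_*)$, and increasing again on $(r_*,\cdot)$ only if $F_c>0$ there.

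A cleaner route is to compare $r^2G_c$ at $R_0$. Convergence in $H^2$ gives $G_c(R_0)\to Q'(R_0)/(2m)<0$, so $G_c(R_0)<0$. On $[R_0,r_*)$ the integrand is positive, so $r^2G_c$ increases. Thus $G_c<0$ on $[R_0,r_*)$ as long as $r^2G_c$ has not yet reached $0$. Meanwhile $F_c$ decreases and reaches $0$ at $r_*$. If $r_*<\infty$, then beyond $r_*$ we have $F_c<0$ and the integrand is negative, so $r^2G_c$ decreases and stays negative. Then $F_c$ keeps decreasing and stays negative, which contradicts $F_c\to0$ at infinity (since $F_c\in L^2$ and is monotone at the end). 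Hence $r_*=\infty$, $F_c>0$, and the sign of $G_c$ follows as above.

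This bookkeeping near $r_*$ is the main obstacle, and I would write it out carefully.

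\emph{Uniqueness of the representative.} Let $\cT_\gamma u_c$ have the form \eqref{eq:1.10} with real radial profiles and positive upper profile. By \eqref{eq:3.27}, $e^{\ii\theta}S F_c(\abs{R_S^{-1}(x-y)})\mathbf e_1=\widetilde F(\abs x)\mathbf e_1$. Radiality about $0$ combined with decay of $F_c$ forces $y=0$. Then $e^{\ii\theta}S\mathbf e_1=\pm\mathbf e_1$, and positivity of both profiles gives the sign $+$, so $\widetilde F=F_c$.

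For the lower component, $e^{\ii\theta}S(\sigma\cdot R_S^{-1}\widehat x)\mathbf e_1=(\sigma\cdot\widehat x)e^{\ii\theta}S\mathbf e_1=(\sigma\cdot\widehat x)\mathbf e_1$ by \eqref{eq:1.9}. Hence $\widetilde G=G_c$.
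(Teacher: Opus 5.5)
Your argument is correct in substance. Its skeleton matches the paper's: the same radial ODE, the formula for $r^2G_c$, a unique crossing of $V_c=\lambda_c$, positivity near the origin by convergence, and uniqueness via the orbit. The one genuinely different step is positivity of $F_c$ at large $r$.

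\emph{Positivity of $F_c$.} The paper uses an integral identity. If $F_c(r_0)=0$ for some $r_0\ge R$, where $\lambda_c-V_c\ge\lambda_\infty/4$, one integrates $(r^2F_cG_c)'=r^2\{(\lambda_c-V_c)F_c^2+(m+c^{-2}\omega_c+c^{-2}V_c)G_c^2\}$ from $r_0$ along a sequence $R_n\to\infty$ with $R_n^2(F_c^2+G_c^2)(R_n)\to0$. This forces $F_c=G_c=0$ on $[r_0,\infty)$, and uniqueness for \eqref{eq:3.32} finishes. That route is case-free and needs only $L^2$ integrability.

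You instead track signs past the first zero $r_*>R_0$, and this works once the bookkeeping is done:
\begin{itemize}
\item $F_c$ reaches $0$ from above, so $F_c'(r_*)\le0$ and hence $G_c(r_*)\le0$.
\item $G_c(r_*)=0$ is excluded by uniqueness for \eqref{eq:3.32}; your sketch leaves this case implicit.
\item Then $F_c<0$, with $r^2G_c$ and $F_c$ both decreasing, on all of $(r_*,\infty)$. This contradicts $F_c\in L^2(r^2\,\dd r)$.
\end{itemize}
With this ordering you need neither $G_c(R_0)<0$ nor a separate treatment of $r^2G_c$ vanishing before $r_*$ (that case would make $F_c$ increase up to $r_*$).

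\emph{Other, smaller differences.} For $G_c<0$ you use pointwise decay $r^2G_c\to0$, taken from the exponential decay asserted in \cref{lem:3.3}. The paper only needs that a positive $G_c$ near infinity would make $F_c$ eventually increasing. You obtain $V_c(0)>\lambda_c$ by perturbing from $c=\infty$, whereas the paper proves it directly at finite $c$.

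\emph{One justification needs repair.} From $Q'<0$ on $(0,\infty)$ you only get $Q''(0)\le0$, not $Q''(0)<0$; also, near $0$ one has $Q'(r)\approx Q''(0)r$, not $-Q''(0)r$. Derive $V_\infty(0)>\lambda_\infty$ as the paper does: if $V_\infty(0)\le\lambda_\infty$, then $r^2Q'(r)=2m\int_0^rs^2(\lambda_\infty-V_\infty(s))Q(s)\,\dd s>0$, a contradiction.

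\emph{Uniqueness step.} Excluding $y\ne0$ by decay is valid, since it makes $F_c$ $2\abs y$-periodic on $[0,\infty)$. But you already have $F_c'=(m+c^{-2}\omega_c+c^{-2}V_c)G_c<0$, so the paper's use of strict monotonicity is more direct.
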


\begin{proof}
Set
\begin{equation*}
 V_c(r):=
 \left[\abs{x}^{-1}\ast
 \left(F_c^2+c^{-2}G_c^2\right)\right](r).
\end{equation*}
By \cref{lem:2.12},
\begin{equation*}
 F_c,G_c,V_c\in C^1([0,\infty)),
 \qquad
 F_c'(0)=0.
\end{equation*}
The second equation in \eqref{eq:2.25} and
$\lambda_c=mc^2-\omega_c$ give
\begin{equation}
 \left(m+c^{-2}\omega_c+c^{-2}V_c(r)\right)G_c(r)=F_c'(r),
 \qquad
 m+c^{-2}\omega_c+c^{-2}V_c(r)>0.
 \label{eq:3.28}
\end{equation}
Hence
\begin{equation*}
 G_c(0)=0.
\end{equation*}
Choose $R>0$ such that
\begin{equation*}
 R^{-1}<\frac{\lambda_\infty}{4}.
\end{equation*}
By \eqref{eq:2.31} and
$H^2(\R^3)\hookrightarrow L^\infty(\R^3)$,
\begin{equation*}
 \norm[L^\infty]{F_c-Q}\longrightarrow0,
 \qquad
 \lambda_c\longrightarrow\lambda_\infty.
\end{equation*}
Thus, for all sufficiently large $c$,
\begin{equation}
 F_c(r)>0\quad(0\le r\le R),
 \qquad
 \lambda_c>\frac{\lambda_\infty}{2}.
 \label{eq:3.29}
\end{equation}
Newton's formula and the mass constraint yield
\begin{align}
 V_c(r)
 &=4\pi\left
 \{\frac1r\int_0^r
 \left(F_c(s)^2+c^{-2}G_c(s)^2\right)s^2\,\dd s
 \right.\notag\\
 &\hspace{7em}\left.
 +\int_r^\infty
 \left(F_c(s)^2+c^{-2}G_c(s)^2\right)s\,\dd s
 \right\}
 \le\frac1r.
 \label{eq:3.30}
\end{align}
Consequently, for $r\ge R$,
\begin{equation}
 \lambda_c-V_c(r)\ge\frac{\lambda_\infty}{4},
 \qquad
 m+c^{-2}\omega_c+c^{-2}V_c(r)\ge m.
 \label{eq:3.31}
\end{equation}

Assume that $F_c(r_0)=0$ for some $r_0\ge R$.  Since
$F_c,G_c\in L^2((0,\infty),r^2\,\dd r)$, there is a sequence
$R_n\to\infty$, $R_n>r_0$, such that
\begin{equation*}
 R_n^2\left(F_c(R_n)^2+G_c(R_n)^2\right)\longrightarrow0.
\end{equation*}
The first two equations in \eqref{eq:2.25} give
\begin{align*}
 &\int_{r_0}^{R_n}
 \left\{
 \bigl(\lambda_c-V_c(r)\bigr)F_c(r)^2
 +\bigl(m+c^{-2}\omega_c+c^{-2}V_c(r)\bigr)G_c(r)^2
 \right\}r^2\,\dd r\\
 &\qquad=
 \left[r^2F_c(r)G_c(r)\right]_{r_0}^{R_n}.
\end{align*}
Letting $n\to\infty$ and using
\eqref{eq:3.31},
\begin{equation*}
 F_c(r)=G_c(r)=0,
 \qquad r\ge r_0.
\end{equation*}
$(F_c, G_c)$ satisfies
\begin{equation}
 \frac{\dd}{\dd r}
 \binom{F_c}{G_c}
 =
 \begin{pmatrix}
  0&m+c^{-2}\omega_c+c^{-2}V_c(r)\\
  \lambda_c-V_c(r)&-2/r
 \end{pmatrix}
 \binom{F_c}{G_c},
 \qquad r>0.
 \label{eq:3.32}
\end{equation}
The coefficient matrix is continuous on 
$(0,\infty)$.  Since
\begin{equation*}
 F_c(r_0)=G_c(r_0)=0,
\end{equation*}
uniqueness for \eqref{eq:3.32} gives
\begin{equation*}
 F_c(r)=G_c(r)=0,
 \qquad r>0,
\end{equation*}
contrary to the mass constraint.  Hence
\begin{equation}
 F_c(r)>0,
 \qquad r\ge0.
 \label{eq:3.33}
\end{equation}
By \eqref{eq:3.30},
\begin{equation*}
 V_c(r)\longrightarrow0
 \qquad(r\to\infty),
\end{equation*}
and 
\begin{equation}
 V_c'(r)
 =-\frac{4\pi}{r^2}\int_0^r
 \left(F_c(s)^2+c^{-2}G_c(s)^2\right)s^2\,\dd s<0,
 \qquad r>0.
 \label{eq:3.34}
\end{equation}
If $V_c(0)\le\lambda_c$, then
\begin{equation*}
 \lambda_c-V_c(r)>0,
 \qquad r>0,
\end{equation*}
and the first equation in \eqref{eq:2.25} gives
\begin{equation*}
 r^2G_c(r)
 =\int_0^r s^2\bigl(\lambda_c-V_c(s)\bigr)F_c(s)\,\dd s>0.
\end{equation*}
By \eqref{eq:3.28},
\begin{equation*}
 F_c'(r)>0,
 \qquad r>0,
\end{equation*}
which contradicts $F_c\in L^2((0,\infty),r^2\,\dd r)$.  Therefore
\begin{equation*}
 V_c(0)>\lambda_c.
\end{equation*}
By \eqref{eq:3.34}, there is a unique $r_*>0$
such that
\begin{equation*}
 V_c(r_*)=\lambda_c,
 \qquad
 \lambda_c-V_c(r)
 \begin{cases}
  <0,&0<r<r_*,\\
  >0,&r>r_*.
 \end{cases}
\end{equation*}
Hence, for $0<r\le r_*$,
\begin{equation}
 r^2G_c(r)
 =\int_0^r s^2\bigl(\lambda_c-V_c(s)\bigr)F_c(s)\,\dd s<0.
 \label{eq:3.35}
\end{equation}
If $G_c$ vanishes in $(r_*,\infty)$, let $r_0>r_*$ be its first zero.
Then
\begin{equation*}
 \bigl(r^2G_c(r)\bigr)'
 =r^2\bigl(\lambda_c-V_c(r)\bigr)F_c(r)>0,
 \qquad r>r_*,
\end{equation*}
so
\begin{equation*}
 G_c(r)>0,
 \qquad r>r_0.
\end{equation*}
 \eqref{eq:3.28} yields
\begin{equation*}
 F_c'(r)>0,
 \qquad r>r_0,
\end{equation*}
again contradicting $F_c\in L^2((0,\infty),r^2\,\dd r)$.  Thus
\begin{equation}
 G_c(r)<0,
 \qquad
 F_c'(r)<0,
 \qquad r>0.
 \label{eq:3.36}
\end{equation}

Let $\widetilde u_c=(\widetilde f_c,\widetilde g_c)^T\in\cG_c$ have
real radial profiles $\widetilde F_c,\widetilde G_c$ with
$\widetilde F_c>0$.  By \eqref{eq:3.27},
\begin{equation*}
 \widetilde u_c
 =\cT_{(y,S,e^{\ii\theta})}u_c
 \qquad\text{for some }(y,S,e^{\ii\theta})\in\cH.
\end{equation*}
Then
\begin{equation}
 \widetilde F_c(\abs x)=F_c(\abs{x-y}).
 \label{eq:3.37}
\end{equation}
If $y\ne0$, choose $r>\abs y$ and set $x=\pm r y/\abs y$.  Then
\eqref{eq:3.37} gives
\begin{equation*}
 F_c(r-\abs y)=F_c(r+\abs y),
\end{equation*}
contrary to \eqref{eq:3.36}.  Hence $y=0$, and
\begin{equation*}
 \widetilde F_c(r)\mathbf e_1
 =F_c(r)e^{\ii\theta}S\mathbf e_1.
\end{equation*}
Since $F_c,\widetilde F_c>0$,
\begin{equation*}
 \widetilde F_c=F_c,
 \qquad
 e^{\ii\theta}S\mathbf e_1=\mathbf e_1.
\end{equation*}
Using \eqref{eq:1.9},
\begin{align*}
 -\ii\widetilde G_c(\abs x)(\sigma\cdot\widehat x)\mathbf e_1
 &=e^{\ii\theta}S
 \left[-\ii G_c(\abs x)
 (\sigma\cdot R_S^{-1}\widehat x)\mathbf e_1\right]\\
 &=-\ii G_c(\abs x)(\sigma\cdot\widehat x)
 e^{\ii\theta}S\mathbf e_1,
\end{align*}
whence
\begin{equation*}
 \widetilde G_c=G_c,
 \qquad
 \widetilde u_c=u_c.
\end{equation*}
Together with \eqref{eq:3.27}, this completes the proof of
\cref{thm:1.1}.
\end{proof}
}

\begin{proposition}
\label{prop:3.7}
For all sufficiently large \(c\), \(\Ker\cL_c\) is the six-dimensional
real space in \eqref{eq:1.11}.
\end{proposition}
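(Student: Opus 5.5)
The plan is to identify $\Ker\cL_c$ with the kernel of the unaugmented linearization $D_{(f,h,\lambda)}\cF_c$ at $(f_c,h_c,\lambda_c)$. We then show that the six symmetry generators lie in it, and finally use the uniform invertibility of the \emph{augmented} linearization from \cref{lem:2.13} to show that nothing else does.

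\textbf{Step 1: change of variables.} For $(\varphi,\chi,\mu)\in X_2$ set $\psi:=c\chi$ and recall $h_c=cg_c$. Comparing the definition of $\cL_c$ with \eqref{eq:2.28}, the first row of $\cL_c(\varphi,\chi,\mu)$ is exactly the first row of $D_z\widehat\cF_c(z_c)(\varphi,\psi,\mu,0)$, with the $\sum b_jv_j$ term absent. The second row of $\cL_c$, divided by $c$, equals the second row of \eqref{eq:2.28}, because $(\lambda_c-2mc^2)\chi/c=-2m\psi+c^{-2}\lambda_c\psi$. The third rows coincide. Hence
\[
(\varphi,\chi,\mu)\in\Ker\cL_c \iff D_{(f,h,\lambda)}\cF_c(f_c,h_c,\lambda_c)(\varphi,c\chi,\mu)=0,
\]
so it suffices to compute the latter kernel.

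\textbf{Step 2: the six vectors lie in the kernel and are independent.} By \eqref{eq:3.16}, $\cF_c(\Theta_c(p),\lambda_c)=0$ for all small $p$, where $\Theta_c(p)=\cT_p(f_c,h_c)$. Differentiating at $p=0$ gives
\[
D\cF_c(f_c,h_c,\lambda_c)(y_{j,c},0)=0,\qquad 1\le j\le 6,
\]
with $y_{j,c}$ as in \eqref{eq:3.1}. The $\lambda$-component is $0$ because the multiplier is invariant along the orbit. The constraint row vanishes because $\cT_p$ preserves $\norm[L^2]{f}^2+c^{-2}\norm[L^2]{h}^2$.

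Computing $y_{j,c}$ explicitly from the formula for $\Theta_c$ (translations give $\partial_j$, $\Sigma(a)$ gives $\ii\mathbf e_1,\mathbf e_2,\ii\mathbf e_2$ acting on both components) produces exactly the six vectors listed in \eqref{eq:1.11}, after undoing $\psi=c\chi$. Here one uses $\partial_{a_1}$ for $(\ii f_c,\ii g_c)$. The two $\mathbf e_2$ vectors come from $\partial_{a_2},\partial_{a_3}$, using that $(\sigma\cdot\widehat x)$ acts linearly on the spinor $\Sigma(a)$. These vectors equal $\mathrm T_{u_c}(\cH\cdot u_c)$.

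For linear independence, \cref{lem:2.14} gives $(y_{j,c})_\uparrow\to v_j$ in $H^2$. Hence the matrix
\[
A_c:=\bigl(\ipR{(y_{j,c})_\uparrow}{v_i}\bigr)_{i,j}
\]
satisfies $A_c=\Gamma+O(c^{-2})$, which is invertible for large $c$. This shows the six vectors are independent, so $\dim\Ker\cL_c\ge6$.

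\textbf{Step 3: upper bound.} Let $(\varphi,\psi,\mu)$ be in the kernel. Using invertibility of $A_c$, choose $\bm t\in\R^6$ such that
\[
\varphi':=\varphi-\sum_j t_j (y_{j,c})_\uparrow \quad\text{satisfies}\quad \ipR{\varphi'}{v_i}=0,\ 1\le i\le6.
\]
Set $(\varphi',\psi',\mu')=(\varphi,\psi,\mu)-\sum_j t_j(y_{j,c},0)$, which is still in the kernel by Step 2. Then $(\varphi',\psi',\mu',0)\in\Ker D_z\widehat\cF_c(z_c)$: the first three blocks vanish because of the kernel property with $\bm b=0$, and the last block vanishes by the orthogonality just arranged. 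By \cref{lem:2.12}, $\norm[X_2\times\R^6]{z_c-z_\infty}\to0$, so \eqref{eq:2.29} with $s=2$ applies and gives $(\varphi',\psi',\mu')=0$. Thus $(\varphi,\chi,\mu)$ lies in the span of the six vectors, and in particular $\mu=0$.

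\textbf{Main obstacle.} The delicate point is Step 2: verifying carefully that the derivatives of the local section $\Theta_c$ give exactly the displayed vectors, with the correct powers of $c$ and the correct spinor factors in the lower components. It also requires that the projection matrix $A_c$ be uniformly invertible. The latter follows from \eqref{eq:3.2} together with the $O(c^{-2})$ convergence in \cref{lem:2.14}.
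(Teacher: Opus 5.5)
Your proposal is correct and follows the paper's proof essentially step for step. The rescaling $\psi=c\chi$ is the identity \eqref{eq:3.39}, and differentiating the equivariance \eqref{eq:3.16} along the local section $\Theta_c$ gives \eqref{eq:3.40}. Invertibility of the matrix $M_c\to\Gamma$ lets you project off the six orbit directions, and injectivity of the augmented linearization $D_z\widehat{\cF}_c(z_c)$ removes the remainder: you cite \eqref{eq:2.29} for this, while the paper rederives it by the same Neumann-series perturbation of $\widehat{\cL}_\infty$. Your explicit computation of the $\partial_{p_j}\Theta_c$, and your remark that invertibility of $M_c$ gives linear independence of the six vectors, fill in details the paper leaves implicit.
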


\begin{proof}
Set
\begin{equation*}
 \widehat{\cL}_c
 :=D_z\widehat{\cF}_c(f_c,h_c,\lambda_c,0).
\end{equation*}
By \eqref{eq:2.15} and
\eqref{eq:2.31},
\begin{align*}
 (f_c,h_c,\lambda_c,0)&\longrightarrow z_\infty
 &&\text{in }X_2\times\R^6,\\
 \norm[\cB(X_2\times\R^6,X_1\times\R^6)]{
 \widehat{\cL}_c-\widehat{\cL}_\infty}&\longrightarrow0.
\end{align*}
Hence, for all sufficiently large \(c\),
\begin{equation*}
 \norm[\cB(X_2\times\R^6)]{
 \widehat{\cL}_\infty^{-1}
 (\widehat{\cL}_c-\widehat{\cL}_\infty)}<1,
\end{equation*}
and
\begin{equation}
 \widehat{\cL}_c:X_2\times\R^6
 \longrightarrow X_1\times\R^6
 \label{eq:3.38}
\end{equation}
is an isomorphism.
For \((\varphi,\chi,\mu)\in
H^2(\R^3,\C^2)\times H^2(\R^3,\C^2)\times\R\),
\begin{equation}
 \widehat{\cL}_c(\varphi,c\chi,\mu,0)
 =\left(
 \diag(1,c^{-1},1)\cL_c(\varphi,\chi,\mu),
 (\ipR{\varphi}{v_j})_{j=1}^6
 \right),
 \label{eq:3.39}
\end{equation}
Let \(y_{j,c}=(\eta_{j,c},\zeta_{j,c})\) be the tangent vectors in
\eqref{eq:3.1}.  Then
\begin{equation}
 \cL_c(\eta_{j,c},c^{-1}\zeta_{j,c},0)=0,
 \qquad 1\le j\le6.
 \label{eq:3.40}
\end{equation}
Moreover, for $s>0$
\begin{align*}
 (f_c,h_c)&\longrightarrow(f_\infty,h_\infty)
 &&\text{in }H^s\times H^s,\\
 y_{j,c}&\longrightarrow y_{j,\infty}
 &&\text{in }H^s\times H^s,\\
 (M_c)_{ij}:=\ipR{\eta_{j,c}}{v_i}
 &\longrightarrow \ipR{(y_{j,\infty})_\uparrow}{v_i},
 &&1\le i,j\le6.
\end{align*}
Hence, by \eqref{eq:3.2},
\begin{equation*}
 \det M_c\ne0
 \qquad\text{for all sufficiently large }c.
\end{equation*}

Take \((\varphi,\chi,\mu)\in\Ker\cL_c\), and let
\(\bm t=(t_1,\ldots,t_6)\) solve
\begin{equation*}
 M_c\bm t=(\ipR{\varphi}{v_i})_{i=1}^6.
\end{equation*}
Then
\begin{equation*}
 \widetilde\varphi
 :=\varphi-\sum_{j=1}^6t_j\eta_{j,c},
 \qquad
 \widetilde\psi
 :=c\chi-\sum_{j=1}^6t_j\zeta_{j,c},
 \qquad
 \ipR{\widetilde\varphi}{v_i}=0.
\end{equation*}
 \eqref{eq:3.39} and
\eqref{eq:3.40} give
\begin{equation*}
 \widehat{\cL}_c
 (\widetilde\varphi,\widetilde\psi,\mu,0)=0.
\end{equation*}
By \eqref{eq:3.38},
\begin{equation*}
 \widetilde\varphi=\widetilde\psi=0,
 \qquad
 \mu=0,
\end{equation*}
and therefore
\begin{equation*}
 (\varphi,\chi,\mu)
 =\sum_{j=1}^6t_j
 (\eta_{j,c},c^{-1}\zeta_{j,c},0).
\end{equation*}
Together with \eqref{eq:3.40}, this proves
\eqref{eq:1.11}.
\end{proof}

{
\begin{lemma}
\label{lem:3.8}
There exists \(\kappa_\infty>0\) such that
\begin{equation}
 \ipR{\cL\varphi}{\varphi}
 \ge\kappa_\infty\norm[H^1]{\varphi}^2,
 \qquad
 \varphi\in H^1(\R^3,\C^2)\cap\{f_\infty\}^\perp\cap \cK^\perp,
 \label{eq:3.41}
\end{equation}

\end{lemma}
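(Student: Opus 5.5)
We decompose $\varphi=(\Phi_1+\ii\Phi_2,\Phi_3+\ii\Phi_4)^T$ as in the proof of \cref{lem:2.5}. Then
\[
 \ipR{\cL\varphi}{\varphi}
 =\ip{\cL_+\Phi_1}{\Phi_1}+\sum_{j=2}^4\ip{\cL_-\Phi_j}{\Phi_j}.
\]
The orthogonality conditions split componentwise. The condition $\varphi\perp f_\infty$ gives $\Phi_1\perp Q$. The condition $\varphi\perp\cK$ gives $\Phi_1\perp\partial_kQ$ for $k=1,2,3$ and $\Phi_j\perp Q$ for $j=2,3,4$. It therefore suffices to prove two scalar coercivity estimates:
\[
\ip{\cL_-P}{P}\gtrsim\norm[H^1]{P}^2 \quad\text{for } P\perp Q,
\]
\[
\ip{\cL_+P}{P}\gtrsim\norm[H^1]{P}^2 \quad\text{for } P\perp Q,\ \partial_kQ.
\]

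\emph{The $\cL_-$ estimate.} Since $Q>0$ solves $\cL_-Q=0$, $Q$ is the ground state of the Schr\"odinger operator $\cL_-$, and $0$ is a simple lowest eigenvalue. The potential $\abs{x}^{-1}\ast Q^2$ decays, so the essential spectrum is $[\lambda_\infty,\infty)$. Hence $\cL_-$ has a spectral gap $\ge\eta>0$ on $Q^\perp$, which gives $L^2$-coercivity. We upgrade this to $H^1$ by writing
\[
\ip{\cL_-P}{P}\ge\tfrac{1}{2m}\norm[L^2]{\nabla P}^2-C\norm[L^2]{P}^2,
\]
using boundedness of $\abs{x}^{-1}\ast Q^2$, and taking a convex combination with the $L^2$ bound.

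\emph{The $\cL_+$ estimate.} This is the main step. From the nondegeneracy in \cref{lem:2.5}, $\Ker\cL_+=\Span\{\partial_kQ\}$. Since $\ip{\cL_+Q}{Q}=-2\int(\abs x^{-1}\ast Q^2)Q^2<0$, $\cL_+$ has at least one negative eigenvalue. Because $Q$ minimizes $\cE_\infty$ on the sphere, the second variation gives $\ip{\cL_+P}{P}\ge0$ for all real $P\perp Q$; in particular $\cL_+$ has exactly one negative eigenvalue. The nonlocal term $-2Q(\abs x^{-1}\ast(Q\,\cdot))$ is relatively compact, so $\sigma_{\mathrm{ess}}(\cL_+)=[\lambda_\infty,\infty)$.

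We then argue by contradiction. Suppose $\inf\{\ip{\cL_+P}{P}:\norm[L^2]{P}=1,\ P\perp Q,\partial_kQ\}=0$. Since the essential spectrum lies at $\lambda_\infty>0$, a minimizing sequence has a weakly convergent subsequence whose limit is nonzero; this uses compactness of the perturbation. The limit is a minimizer $P_*$ satisfying
\[
\cL_+P_*=\alpha Q+\textstyle\sum\beta_k\partial_kQ .
\]
Testing against $\partial_kQ$ gives $\beta=0$. So $\cL_+P_*=\alpha Q$ with $\ip{\cL_+P_*}{P_*}=0$. If $\alpha=0$, then $P_*\in\Ker\cL_+$, contradicting $P_*\perp\partial_kQ$ and $P_*\ne0$. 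If $\alpha\ne0$, then by \eqref{eq:2.10}, $P_*=\alpha\,\cL_+^{-1}Q+\text{kernel}$, which equals $\alpha\,\cL_+^{-1}Q$ after orthogonality. The constraint $P_*\perp Q$ then forces $\alpha\,d=0$ with $d=-\tfrac{1}{4\lambda_\infty}\neq0$, a contradiction.

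This yields $L^2$-coercivity of $\cL_+$ on the constrained space. We upgrade to $H^1$ exactly as for $\cL_-$, using $\norm[L^2]{Q(\abs x^{-1}\ast(QP))}\lesssim\norm[L^2]{P}$. The step we expect to require the most care is the compactness/attainment of the infimum for $\cL_+$ together with the precise use of $d\ne0$ to exclude the $Q$-direction.
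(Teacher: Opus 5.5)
Your proof is correct, but it is organized differently from the paper's.

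\textbf{Your route.} You split $\varphi$ into its four real components and prove two scalar coercivity estimates: for $\cL_-$ via Perron--Frobenius simplicity of the ground state $Q$ plus Weyl's theorem, and for $\cL_+$ via an $L^2$-normalized minimization argument. You then upgrade each estimate from $L^2$ to $H^1$ by a convex combination. The lower bound $\ipR{\cL_\pm P}{P}\ge\frac1{2m}\norm[L^2]{\nabla P}^2-C\norm[L^2]{P}^2$ that you use for the upgrade is also what keeps your $\cL_+$ minimizing sequence bounded in $H^1$, so it is needed already at that earlier step.

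\textbf{The paper's route.} The paper never decomposes. It works with the vector operator $\cL$ on all of $\{f_\infty\}^\perp$.
\begin{enumerate}
\item Constrained minimality of $f_\infty$ gives $\ipR{\cL\varphi}{\varphi}\ge0$ for all four components at once. So no spectral theory for $\cL_-$ is needed.
\item A zero of the form is then a minimizer over that larger space. Hence the Euler--Lagrange equation carries a single multiplier, $\cL\varphi=\mu f_\infty$. Then $\varphi\in\cK^\perp$ forces $\varphi=\mu\cR f_\infty$, and $d\ne0$ gives $\mu=0$.
\item A single contradiction argument, normalized directly in $H^1$, finishes the proof. The weak limit must vanish, the compact potential terms drop out, and one is left with $\ipR{\cL\varphi_n}{\varphi_n}\ge\min\{(2m)^{-1},\lambda_\infty\}+o(1)$. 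No separate $L^2\to H^1$ upgrade is needed.
\end{enumerate}

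\textbf{Comparison.} Your route yields more explicit spectral information: a gap for $\cL_-$ on $Q^\perp$, Morse index one for $\cL_+$, and a constant equal to the minimum of two scalar constants. The cost is length and extra spectral input. In your $\cL_+$ step you could also minimize over the larger space $Q^\perp$, where you already know the form is nonnegative. That gives $\cL_+P_*=\alpha Q$ directly, without the $\beta_k$ multipliers or the test against $\partial_kQ$.
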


\begin{proof}
For \(\ipR{f_\infty}{\varphi}=0\), constrained minimality gives
\[
 0\le\frac12\left.\frac{\mathrm d^2}{\mathrm dt^2}\right|_{t=0}
 \cE_\infty\!\left(
 \frac{f_\infty+t\varphi}{\norm[L^2]{f_\infty+t\varphi}}\right)
 =\ipR{\cL\varphi}{\varphi}.
\]
If $\ipR{\cL\varphi}{\varphi}=0$, then
\[
 \cL\varphi=\mu f_\infty\quad\text{in }H^{-1}.
\]
for some $\mu\in \R$.
For \(\varphi\perp\cK\), elliptic regularity and \eqref{eq:2.10} give
\[
 \begin{gathered}
 \varphi\in H^2\cap\cK^\perp,\qquad
 \varphi=\mu(\restr{\cL}{\cK^\perp})^{-1}f_\infty,
 \end{gathered}
\]
and
\[
0=\ipR{f_\infty}{\varphi}
   =-\frac{\mu}{4\lambda_\infty},
\]
hence $\mu=0$, $\varphi=0$.
If \eqref{eq:3.41} failed, choose
\[
 \norm[H^1]{\varphi_n}=1,\qquad
 \varphi_n\perp f_\infty,\cK,\qquad
 \ipR{\cL\varphi_n}{\varphi_n}\to0,
 \qquad\varphi_n\rightharpoonup\varphi\text{ in }H^1.
\]
By Rellich compactness embedding and weak lower semicontinuity,
\[
 0\le\ipR{\cL\varphi}{\varphi}
 \le\liminf_n\ipR{\cL\varphi_n}{\varphi_n}=0,
 \qquad\varphi=0.
\]
Hence
\[
 o(1)=\ipR{\cL\varphi_n}{\varphi_n}
 =\frac1{2m}\norm[L^2]{\nabla\varphi_n}^2
  +\lambda_\infty\norm[L^2]{\varphi_n}^2+o(1)
 \ge\min\{(2m)^{-1},\lambda_\infty\}+o(1),
\]
a contradiction.
\end{proof}

{

\begin{lemma}
\label{lem:3.9}
There exist \(c_0>0\) and \(\kappa>0\) such that, for \(c\ge c_0\),
\[
 \mathfrak q_c(u)\ge\kappa\norm[c,+]{u}^2,
 \qquad
 u\in E_c^+\cap\{u_c\}^\perp
 \cap\bigl(\mathrm T_{u_c}(\cH\cdot u_c)\bigr)^\perp.
\]
\end{lemma}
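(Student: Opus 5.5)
The plan is a contradiction--compactness argument that reduces \eqref{eq:1.12} to the coercivity of the limiting Hartree Hessian in \cref{lem:3.8}.

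\textbf{Step 1: rewrite $\mathfrak q_c$.} For $u\in E_c^+$ we have $\mathscr D_c u=\abs{\mathscr D_c}u$ and $\lambda_c=mc^2-\omega_c$, so
\begin{equation*}
 \mathfrak q_c(u)=\ipR{(\abs{\mathscr D_c}-mc^2)u}{u}+\lambda_c\norm[L^2]{u}^2
 -\int_{\R^3}\bigl(\abs{x}^{-1}\ast\abs{u_c}^2\bigr)\abs u^2\,\dd x
 -2\int_{\R^3\times\R^3}\frac{\Re(u_c^\dagger u)(x)\,\Re(u_c^\dagger u)(y)}{\abs{x-y}}\,\dd x\dd y .
\end{equation*}
Since $\lambda_c\to\lambda_\infty>0$, the first two terms are bounded below by $\min\{1,\lambda_\infty/2\}\norm[c,+]{u}^2$. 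It therefore suffices to control the two Coulomb terms, which we denote $N_c(u)$.

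\textbf{Step 2: uniform bounds in $c$.} In Fourier variables the kinetic weight is
\begin{equation*}
 \abs{\mathscr D_c}-mc^2=\frac{c^2\abs\xi^2}{\sqrt{c^2\abs\xi^2+m^2c^4}+mc^2}.
\end{equation*}
It increases to $\abs\xi^2/(2m)$ as $c\to\infty$, and for $c\ge1$ it is bounded below by $C\min\{\abs\xi^2,\abs\xi\}$. Hence $\norm[c,+]{\cdot}$ dominates the $H^{1/2}$ norm uniformly in $c\ge1$.

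Elements of $E_c^+$ have the form $u=(\varphi,\chi)$ with $\chi=A_c(\xi)\varphi$, where $\abs{A_c(\xi)}\lesssim\min\{\abs\xi/c,1\}$. Consequently $\chi\to0$ in $L^2_{\mathrm{loc}}$-compact sense whenever $\norm[c,+]{u}$ stays bounded. The Coulomb terms $N_c(u)$ are bounded by $C\norm[H^{1/2}]{u}^2$, using Hardy's inequality and the uniform smoothness and decay of $u_c$ from \cref{lem:2.12} and \eqref{eq:2.31}.

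\textbf{Step 3: contradiction argument.} Suppose \eqref{eq:1.12} fails for every $\kappa>0$. Then there are $c_n\to\infty$ and $u_n=(\varphi_n,\chi_n)\in E_{c_n}^+$ satisfying the orthogonality conditions, with $\norm[c_n,+]{u_n}=1$ and $\mathfrak q_{c_n}(u_n)\le1/n$.

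\emph{Limit objects.} Pass to a subsequence with $\varphi_n\rightharpoonup\varphi$ in $H^{1/2}$. Fatou's lemma in Fourier variables gives
\begin{equation*}
 \frac1{2m}\norm[L^2]{\nabla\varphi}^2\le\liminf_n\ipR{(\abs{\mathscr D_{c_n}}-mc_n^2)u_n}{u_n}\le1,
\end{equation*}
so $\varphi\in H^1$.

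\emph{Convergence of the Coulomb terms.} We have $u_{c_n}\to(f_\infty,0)$ in every $H^s$, with exponential decay, and Rellich compactness holds on bounded sets. Since the lower components vanish in the limit, we expect $N_{c_n}(u_n)\to N_\infty(\varphi)$. Here $N_\infty(\varphi)$ is the Coulomb part of $\ipR{\cL\varphi}{\varphi}$.

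\emph{Limit constraints.} The orthogonality conditions pass to the limit: $\ipR{u_n}{u_{c_n}}=0$ gives $\ipR{\varphi}{f_\infty}=0$. The tangent vectors $(\eta_{j,c},c^{-1}\zeta_{j,c})$ converge to $(v_j,0)$ by \eqref{eq:3.2}, so $\varphi\in\cK^\perp$.

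\emph{Conclusion.} Lower semicontinuity yields $\ipR{\cL\varphi}{\varphi}\le\liminf_n\mathfrak q_{c_n}(u_n)\le0$. By \cref{lem:3.8}, $\varphi=0$. Then $N_{c_n}(u_n)\to0$, and Step 1 gives
\begin{equation*}
 \mathfrak q_{c_n}(u_n)\ge\min\{1,\lambda_\infty/2\}-o(1),
\end{equation*}
which contradicts $\mathfrak q_{c_n}(u_n)\le1/n$.

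\textbf{Main obstacle.} The delicate step is the convergence $N_{c_n}(u_n)\to N_\infty(\varphi)$. Only $H^{1/2}$-type control is uniform in $c$, so the proof must exploit two facts: the Coulomb kernel paired against the smooth, decaying $u_{c_n}$ yields a uniformly compact form, and the lower component $\chi_n$ is small. I would first establish that $u\mapsto(\abs{x}^{-1}\ast\abs{u_c}^2)^{1/2}u$ and $u\mapsto\abs{x}^{-1}\ast\Re(u_c^\dagger u)$ are compact from $H^{1/2}$, uniformly for large $c$. To do this I would split into the region $\abs x\le R$, handled by Rellich, and the region $\abs x>R$, where the exponential decay of $u_c$ makes the contribution small.
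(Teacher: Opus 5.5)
Your proposal is correct and takes essentially the same contradiction--compactness route as the paper. You normalize $\norm[c_n,+]{u_n}=1$, extract a weak limit $(\varphi,0)$ with $\varphi\in H^1\cap\{f_\infty\}^\perp\cap\cK^\perp$, use \cref{lem:3.8} to force $\varphi=0$, and then get the contradiction from the kinetic-plus-$\lambda_{c_n}$ part; the paper handles your ``main obstacle'' a bit more directly, through the exact bound $\norm[L^2]{\chi_n}^2\le(2mc_n^2)^{-1}$ on $E_{c_n}^+$ and the convergence $\norm[L^\infty]{\abs{x}^{-1}\ast(\abs{u_{c_n}}^2-Q^2)}\to0$, which reduces the Coulomb terms to fixed decaying weights where local compactness applies.
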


\begin{proof}
Set \(t_c(\xi ):=\sqrt{c^2|\xi |^2+m^2c^4}-mc^2\).
If the results fails, there are \(c_n\to\infty\) and
\(u_n=(\varphi_n,\chi_n)^T\) satisfying
\[
 \begin{gathered}
 u_n\in E_{c_n}^+,
 \qquad
 u_n\perp u_{c_n},\
 \mathrm T_{u_{c_n}}(\cH\cdot u_{c_n}),\\
 \norm[c_n,+]{u_n}^2
 =\norm[L^2]{u_n}^2+
 \int_{\R^3}t_{c_n}(\xi )|\widehat u_n(\xi )|^2\,\dd\xi =1,
 \qquad
 \mathfrak q_{c_n}(u_n)\le n^{-1}.
 \end{gathered}
\]
It's easy to see \(\sup\limits_n\norm[H^{1/2}]{u_n}<\infty\). Moreover,
\[
 \norm[L^2]{\chi_n}^2
 =\int_{\R^3}
 \frac{t_{c_n}(\xi )|\widehat u_n(\xi )|^2}
 {2(t_{c_n}(\xi )+mc_n^2)}\,\dd\xi 
 \le\frac1{2mc_n^2}.
\]
Up to a subsequence,
\[
 u_n\rightharpoonup\binom\varphi0
 \quad\text{in }H^{1/2},
 \qquad
 u_n\to\binom\varphi0
 \quad\text{in }L^2_{\mathrm{loc}}.
\]
For every \(R>0\),
\[
 \begin{gathered}
 \sup_{|\xi |\le R}
 \left|t_{c_n}(\xi )-\frac{|\xi |^2}{2m}\right|\to0
 \end{gathered}
\]
 gives \(\varphi\in H^1\).
By Lemma~\ref*{lem:2.14} and \eqref{eq:3.1},
\[
 \begin{gathered}
 u_{c_n}\to\binom{f_\infty}0,
 \qquad
 \binom{\eta_{j,c_n}}{c_n^{-1}\xi _{j,c_n}}
 \to\binom{v_j}0
 \quad\text{in }H^s,\\
 \ipR{f_\infty}{\varphi}=0,
 \qquad
 \ipR{v_j}{\varphi}=0
 \quad(1\le j\le6).
 \end{gathered}
\]
and hence
\[
 \begin{aligned}
 \norm[L^\infty]{|x|^{-1}\ast(|u_{c_n}|^2-Q^2)}
 \to0.
 \end{aligned}
\]
Lemma~\ref*{lem:3.8} gives
\[
 0\ge\liminf_n\mathfrak q_{c_n}(u_n)
 \ge\ipR{\cL\varphi}{\varphi}
 \ge\kappa_\infty\norm[H^1]{\varphi}^2,
 \qquad
 \varphi=0.
\]
Consequently,
\[
 \begin{aligned}
 n^{-1}&\ge\mathfrak q_{c_n}(u_n)\\
 &=\int_{\R^3}t_{c_n}(\xi )
       |\widehat u_n(\xi )|^2\,\dd\xi 
   +\lambda_{c_n}\norm[L^2]{u_n}^2+o(1)\\
 &\ge\min\{1,\lambda_\infty/2\}+o(1),
 \end{aligned}
\]
a contradiction.
\end{proof}

Proposition~\ref*{prop:3.7} and Lemma~\ref*{lem:3.9}
prove Theorem~\ref*{thm:1.2}.

\begin{lemma}
\label{lem:3.10}
For all sufficiently large \(c\), there holds
\[
  \mathrm d^2\mathcal J_c(w_c)[u,u]
  \ge\kappa\norm[c,+]{u}^2,
  \qquad
  u\in E_c^+\cap\{w_c\}^\perp
  \cap\bigl(\mathrm T_{w_c}(\cH\cdot w_c)\bigr)^\perp,
\]
with \(\kappa>0\) independent of \(c\).
There is \(\delta>0\), independent of \(c\), such that
\[
  \mathcal J_c(w)-e_c
  \asymp
  \inf_{\gamma\in\cH}\norm[c,+]{w-\cT_\gamma w_c}^2
\]
whenever \(w\in E_c^+\), \(\norm[L^2]{w}=1\), and
\[
  \inf_{\gamma\in\cH}\norm[c,+]{w-\cT_\gamma w_c}<\delta.
\]
\end{lemma}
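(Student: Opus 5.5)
The plan is to reduce the Hessian of \(\mathcal J_c\) at \(w_c\) to the form \(\mathfrak q_c\) of \cref{thm:1.2}, and then run a standard Morse--Bott argument modulo the group \(\cH\).

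\textbf{Step 1: reduction map.} For \(w\in E_c^+\) near \(w_c\), the supremum defining \(\mathcal J_c(w)\) is attained at a unique maximizer \(u=\tau w+v\) with \(v\in E_c^-\). This follows from the strict concavity of \(\cE_c\) in the \(E_c^-\) directions on the relevant region, which is established in \cite{Nolasco2021,ChenDingGuo2026}. The map \(w\mapsto u(w)\) is smooth by the implicit function theorem, and \(u(w_c)=u_c\) up to normalization. Differentiating \(\mathcal J_c(w)=\cE_c(u(w))\) twice, and using that \(u_c\) is a constrained critical point, should give for \(\xi\in E_c^+\cap\{w_c\}^\perp\)
\[
\tfrac12\mathrm d^2\mathcal J_c(w_c)[\xi,\xi]
=\mathfrak q_c(\eta)-\bigl(\text{negative-definite part in the }E_c^-\text{ direction}\bigr)
\ \ge\ \mathfrak q_c(\eta)+\text{(correction)}.
\]
Here \(\eta=\tau_c\xi+v'(\xi)\). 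The optimal choice of \(v'\) maximizes over \(E_c^-\), and the value is \(\max_{v}\mathfrak q_c(\tau_c\xi+v)\ge \mathfrak q_c(\tau_c\xi)\). Since \(\norm[L^2]{P_c^-u_c}=O(c^{-2})\) by \cref{lem:2.14}, we have \(\tau_c=1+O(c^{-2})\). The orthogonality conditions \(\xi\perp w_c\) and \(\xi\perp \mathrm T_{w_c}(\cH\cdot w_c)\) differ from \(\perp u_c\) and \(\perp\mathrm T_{u_c}(\cH\cdot u_c)\) only by \(O(c^{-2})\)-perturbations, because \(\cH\) commutes with \(P_c^\pm\). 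So I decompose \(\xi=\xi_0+\sum\) (small components along \(u_c\) and the tangent directions, projected to \(E_c^+\)), with coefficients \(\lesssim c^{-2}\norm[c,+]{\xi}\). Applying \cref{lem:3.9} to \(\xi_0\) and bounding the cross terms by the uniform boundedness of \(\mathfrak q_c\) in \(\norm[c,+]{\cdot}\) on \(E_c^+\) gives the first claim with \(\kappa/2\).

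\textbf{Step 2: two-sided quadratic estimate.} Given \(w\) with \(\inf_\gamma\norm[c,+]{w-\cT_\gamma w_c}<\delta\), I choose a near-minimizing \(\gamma\) and use a modulation argument analogous to \cref{lem:3.1}, with an implicit function theorem in the finite-dimensional parameter \(p\) and \(c\)-uniform constants. This yields \(\gamma\) with \(\cT_\gamma^{-1}w-w_c\perp \mathrm T_{w_c}(\cH\cdot w_c)\). Invariance \(\mathcal J_c\circ\cT_\gamma=\mathcal J_c\) reduces the problem to \(w=w_c+\xi\) with \(\xi\) orthogonal to the tangent space. Next, I write \(w=\sqrt{1-\norm{\xi_\perp}^2}\,w_c+\xi_\perp\) on the unit sphere. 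Taylor expansion of \(\mathcal J_c\) to second order, with \(\mathrm d\mathcal J_c(w_c)=0\) on the sphere, gives
\[
\mathcal J_c(w)-e_c=\tfrac12\mathrm d^2\mathcal J_c(w_c)[\xi_\perp,\xi_\perp]+O(\norm[c,+]{\xi_\perp}^3).
\]
The lower bound then follows from Step 1 for \(\delta\) small. The upper bound follows from the uniform boundedness of the Hessian, and the distance is comparable to \(\norm[c,+]{\xi_\perp}\).

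\textbf{Main obstacle.} The hard part is the \(c\)-uniformity of the third-order remainder and of the reduction map. Since \(\norm[c,+]{\cdot}\) only controls roughly \(H^1\) at low frequencies and \(c\)-weighted \(H^{1/2}\) at high frequencies, I would bound the cubic Coulomb terms by Hardy-type inequalities, \(\norm[L^\infty]{|x|^{-1}\ast |u|^2}\lesssim \norm[H^{1/2}]{u}^2\), together with \(\norm[H^{1/2}]{u}\lesssim\norm[c,+]{u}\) uniformly for \(c\ge1\). For the implicit-function estimates on \(v(w)\), I would use the uniform coercivity of \(-\mathrm d^2\cE_c\) on \(E_c^-\), where the operator is dominated by \(|\mathscr D_c|+\omega_c\ge mc^2\).
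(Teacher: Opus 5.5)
Your proposal is correct, and your Step~1 is essentially the paper's Hessian argument. The paper also freezes the negative component at $P_c^-u_c$, which is your ``value at $v=0$'', to get $\mathrm d^2\mathcal J_c(w_c)[u,u]\ge 2\norm[L^2]{P_c^+u_c}^2\mathfrak q_c(u)$.

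One simplification you missed: for $\xi\in E_c^+$ the orthogonality conditions are not just $O(c^{-2})$-close but identical. Indeed $\ipR{\xi}{P_c^+y}=\ipR{\xi}{y}$, and $\cT_\gamma$ commutes with $P_c^+$, so $\mathrm T_{w_c}(\cH\cdot w_c)=P_c^+\mathrm T_{u_c}(\cH\cdot u_c)/\norm[L^2]{P_c^+u_c}$. Hence Lemma~\ref{lem:3.9} applies verbatim, and your decomposition into $\xi_0$ plus small components is unnecessary.

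The real difference is Step~2. You Taylor-expand $\mathcal J_c$ to second order, which forces you to prove $c$-uniform $C^3$ bounds on the reduction map and on $\mathcal J_c$ (your ``main obstacle''). The paper never needs these bounds.

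\emph{Lower bound.} The paper applies your Step~1 observation at the nonlinear level. After modulation, $\cT_\gamma^{-1}w=(w_c+u)/\sqrt{1+\norm[L^2]{u}^2}$, and
\[
\mathcal J_c(w)=\mathcal J_c(\cT_\gamma^{-1}w)\ge\cE_c\bigl(\norm[L^2]{P_c^+u_c}\cT_\gamma^{-1}w+P_c^-u_c\bigr).
\]
The right-hand side minus $e_c$ equals $\norm[L^2]{P_c^+u_c}^2\mathfrak q_c(u)+O(\norm[c,+]{u}^3)$. This remainder comes only from the explicit quartic polynomial $\cE_c$, so it is uniform in $c$ by the $H^{1/2}$ Coulomb bounds.

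\emph{Upper bound.} The paper uses the exact identity, valid for any $v\in\cS$ with $\eta=v-u_c$,
\[
 \begin{aligned}
 \cE_c(v)-e_c={}&\ipR{(\mathscr D_c-\omega_c)\eta}{\eta}
 -\int_{\R^3}(|x|^{-1}\ast|u_c|^2)|\eta|^2\,\dd x\\
 &-\frac12\int_{\R^3\times\R^3}
 \frac{(|v(x)|^2-|u_c(x)|^2)(|v(y)|^2-|u_c(y)|^2)}{|x-y|}\,\dd x\dd y.
 \end{aligned}
\]
It drops the two nonpositive Coulomb terms and absorbs the $E_c^-$ part using $-(mc^2+\omega_c)\norm[L^2]{P_c^-\eta}^2$. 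This bounds every element of the fibre, hence the supremum, with no smallness assumption and no information about the maximizer.

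\emph{Comparison.} The paper's route needs only the polynomial structure of $\cE_c$ and the positivity of the Coulomb kernel. Yours would give more, namely a genuine second-order expansion of $\mathcal J_c$ with uniform remainder. The price is uniform reduction estimates: these are plausible via the $\gtrsim mc^2$ coercivity on $E_c^-$, but they are not needed for the stated comparability.
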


\begin{proof}
The reduction in \cite{ChenDingGuo2026} and
\cite[Proposition~3.6, Lemma~4.5]{Nolasco2021}
gives smoothness of \(\mathcal J_c\) and \(\mathcal J_c(w_c)=e_c\).

For \(v\in\cS\), put \(\eta:=v-u_c\).
\(2\ipR{u_c}{\eta}=-\norm[L^2]{\eta}^2\) give
\[
  \begin{aligned}
  \cE_c(v)-e_c
  &=\ipR{(\mathscr D_c-\omega_c)\eta}{\eta}
    -\int_{\R^3}(|x|^{-1}\ast|u_c|^2)|\eta|^2\,\dd x\\
  &\quad-\frac12\int _{\R^3\times\R^3}
  \frac{(|v(x)|^2-|u_c(x)|^2)(|v(y)|^2-|u_c(y)|^2)}
  {|x-y|}\,\dd x\dd y\\
  &\le\ipR{(\mathscr D_c-\omega_c)\eta}{\eta}.
  \end{aligned}
\]
If
\(P_c^+v\in\Span\{w\}\), fix the phase so that
\(P_c^+v=\norm[L^2]{P_c^+v}\,w\). Since
\(\norm[L^2]{P_c^+u_c}\to1\) and
\(\norm[c,+]{w_c}+|\lambda_c|\le C\),
\[
  \begin{aligned}
  \left|\norm[L^2]{P_c^+v}-\norm[L^2]{P_c^+u_c}\right|
  &=\frac{\left|\norm[L^2]{P_c^-v}^2
               -\norm[L^2]{P_c^-u_c}^2\right|}
           {\norm[L^2]{P_c^+v}+\norm[L^2]{P_c^+u_c}}
  \lesssim\norm[L^2]{P_c^-\eta},\\
  \norm[c,+]{P_c^+\eta}
  &\le\norm[c,+]{w-w_c}+C\norm[L^2]{P_c^-\eta},\\
  \cE_c(v)-e_c
  &\le C\norm[c,+]{P_c^+\eta}^2
       -(mc^2+\omega_c)\norm[L^2]{P_c^-\eta}^2\\
  &\le C\norm[c,+]{w-w_c}^2.
  \end{aligned}
\]
Taking the fibre supremum and using invariance yields
\[
  0\le\mathcal J_c(w)-e_c
  \le C\inf_{\gamma\in\cH}\norm[c,+]{w-\cT_\gamma w_c}^2.
\]

The argument of Lemma~\ref*{lem:3.1}, with the
uniformly invertible orbit Gram matrix, gives
\[
  \begin{gathered}
  \cT_\gamma^{-1}w
  =\frac{w_c+u}{\sqrt{1+\norm[L^2]{u}^2}},\qquad
  u\in E_c^+\cap\{w_c\}^\perp
  \cap\bigl(\mathrm T_{w_c}(\cH\cdot w_c)\bigr)^\perp,\\
  \norm[c,+]{u}
  \asymp\inf_{\gamma\in\cH}\norm[c,+]{w-\cT_\gamma w_c},\qquad
  \mathrm T_{w_c}(\cH\cdot w_c)
  =\frac{P_c^+\mathrm T_{u_c}(\cH\cdot u_c)}
         {\norm[L^2]{P_c^+u_c}}.
  \end{gathered}
\]
Fixed negative component, Lemmas~\ref*{lem:2.2}
and~\ref*{lem:3.9} imply, uniformly for large \(c\),
\[
  \begin{aligned}
  \mathcal J_c(w)-e_c
  &\ge\cE_c\!\left(
  \norm[L^2]{P_c^+u_c}
  \frac{w_c+u}{\sqrt{1+\norm[L^2]{u}^2}}+P_c^-u_c
  \right)-e_c\\
  &=\norm[L^2]{P_c^+u_c}^2\mathfrak q_c(u)
    +O(\norm[c,+]{u}^3)\\
  &\gtrsim\norm[c,+]{u}^2
  \asymp\inf_{\gamma\in\cH}\norm[c,+]{w-\cT_\gamma w_c}^2
  \end{aligned}
\]
for sufficiently small \(\norm[c,+]{u}\). The same comparison
with \(tu\), differentiated at \(t=0\), gives
\[
  \mathrm d^2\mathcal J_c(w_c)[u,u]
  \ge2\norm[L^2]{P_c^+u_c}^2\mathfrak q_c(u)
  \gtrsim\norm[c,+]{u}^2.
  \qedhere
\]
\end{proof}
}

} 

\section{Asymptotic expansions of ground state}
\label{sec:4}
\setlength{\jot}{3pt}

In this section, we
set $\varepsilon=c^{-2}$ and define
\begin{equation}
 \widehat{\cF}(\varepsilon,z)
 :=\widehat{\cF}_\infty(z)+\varepsilon\cC_1(z)
                  +\varepsilon^2\cC_2(z),
 \qquad \widehat{\cF}(c^{-2},z)=\widehat{\cF}_c(z),
 \label{eq:4.1}
\end{equation}
where, for $z=(f,h,\lambda,\bm a)$,
\begin{equation}
 \cC_1(z)=\begin{pmatrix}
 -(|x|^{-1}\ast|h|^2)f\\[0.3em]
 (\lambda-|x|^{-1}\ast|f|^2)h\\[0.3em]
 \norm[L^2]{h}^2\\[0.2em]0
 \end{pmatrix},
 \qquad
 \cC_2(z)=\begin{pmatrix}
 0\\[0.3em]-(|x|^{-1}\ast|h|^2)h\\[0.3em]0\\[0.2em]0
 \end{pmatrix}.
 \label{eq:4.2}
\end{equation}
The maps $\widehat{\cF}_\infty,\cC_1,\cC_2$ are real polynomials in $z$ of degree at most three.

\begin{lemma}\label{lem:4.1}
There is a unique
\[
 z_1=(f_1,h_1,\lambda_1,0)
 \in\bigcap_{s\ge0}
 \bigl(\cW^s_{\mathrm{up}}\times\cW^s_{\mathrm{down}}
                     \times\R\times\{0\}\bigr)
\]
satisfying
\begin{equation}
 \widehat{\cL}_\infty z_1
 =\begin{pmatrix}
 (|x|^{-1}\ast|h_\infty|^2)f_\infty\\[0.3em]
 -(\lambda_\infty-|x|^{-1}\ast Q^2)h_\infty\\[0.3em]
 -\norm[L^2]{h_\infty}^2\\[0.2em]0
 \end{pmatrix}.
 \label{eq:4.3}
\end{equation}
It is also the unique solution of \eqref{eq:4.3} in
$X_s\times\R^6$ for each $s\ge0$. For every $s\ge0$,
\begin{equation}
 \norm[X_s\times\R^6]{c^2(z_c-z_\infty)-z_1}
 \le C_sc^{-2}\longrightarrow0.
 \label{eq:4.4}
\end{equation}
\end{lemma}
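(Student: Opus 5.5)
Note first that the right-hand side of \eqref{eq:4.3} is exactly $-\cC_1(z_\infty)$, by \eqref{eq:4.2} together with $\lambda=\lambda_\infty$, $f=f_\infty=Q\mathbf e_1$ and $h=h_\infty$. Existence and uniqueness of $z_1$ in $X_s\times\R^6$ for every $s\ge0$ then follow from the isomorphism \eqref{eq:2.17} of Lemma~\ref{lem:2.9}, provided the data lie in $X_{s-1}\times\R^6$ for all $s$. This holds because $Q\in\bigcap_s H^s$ and, by Lemma~\ref{lem:2.2}, the Coulomb products are in every $H^s$. Since the inverse at different $s$ agrees on smooth data (as shown in the proof of Lemma~\ref{lem:2.9}), the solutions for different $s$ coincide, so $z_1\in\bigcap_s X_s\times\R^6$.

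For the angular structure, I would check that $-\cC_1(z_\infty)$ lies in $\cW^{s-1}_{\mathrm{up}}\times\cW^{s-1}_{\mathrm{down}}\times\R\times\{0\}$. Here $h_\infty=\frac1{2m}\cD(Q\mathbf e_1)=-\frac{\ii}{2m}Q'(\sigma\cdot\widehat x)\mathbf e_1\in\cW_{\mathrm{down}}$, so $|h_\infty|^2$ is radial, the first entry is a real radial multiple of $\mathbf e_1$, and the second entry is a real radial multiple of $h_\infty$. This is exactly the invariance noted in Lemma~\ref{lem:2.9}, namely that $\cC_1=\partial_\varepsilon\widehat{\cF}$ preserves the subspaces. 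The restricted isomorphism \eqref{eq:2.19} then gives $z_1$ in these subspaces with vanishing $\bm a$-component, for each $s\ge1$. Uniqueness in the large space places this same $z_1$ in the subspaces for every $s$, and the case $s<1$ follows from inclusion.

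For \eqref{eq:4.4}, fix $s\ge2$; smaller $s$ then follow from $X_2\hookrightarrow X_s$. Set $w_c:=z_c-z_\infty$, which satisfies $\norm[X_s\times\R^6]{w_c}\lesssim\varepsilon$ by Lemma~\ref{lem:2.14}, and use $\bm a_c=0$ from Proposition~\ref{prop:2.11}. Taylor-expand $0=\widehat{\cF}(\varepsilon,z_c)$ from \eqref{eq:4.1} around $z_\infty$. Since $\widehat{\cF}_\infty$ is a cubic polynomial with $\widehat{\cF}_\infty(z_\infty)=0$, we get
\[
0=\widehat{\cL}_\infty w_c+\varepsilon\cC_1(z_\infty)+\mathcal R_c .
\]
The remainder collects the terms $D^k\widehat{\cF}_\infty(z_\infty)[w_c^{k}]$ for $k=2,3$, the terms $\varepsilon(\cC_1(z_c)-\cC_1(z_\infty))$, and $\varepsilon^2\cC_2(z_c)$. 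By Lemma~\ref{lem:2.2} and the uniform $H^s$ bounds of Lemma~\ref{lem:2.12}, each of these is $O(\varepsilon^2)$ in $X_{s-1}\times\R^6$. Subtracting $\varepsilon$ times \eqref{eq:4.3} gives $\widehat{\cL}_\infty(w_c-\varepsilon z_1)=-\mathcal R_c$. Applying the bounded inverse from Lemma~\ref{lem:2.9} yields $\norm{w_c-\varepsilon z_1}\le C_s\varepsilon^2$, and dividing by $\varepsilon=c^{-2}$ gives \eqref{eq:4.4}.

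The only delicate point is making sure the remainder estimates are genuinely uniform in $c$ at every Sobolev level. This reduces to the trilinear bound \eqref{eq:2.1} combined with the $O(c^{-2})$ bound \eqref{eq:2.31} in each $H^s$. Since Lemma~\ref{lem:2.14} already provides this for all $s\ge\frac12$, no further bootstrap is needed.
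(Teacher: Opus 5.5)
Your proposal is correct and follows essentially the same route as the paper. You identify the right-hand side as $-\cC_1(z_\infty)$, obtain $z_1$ together with its angular structure and uniqueness from the isomorphisms of Lemma~\ref{lem:2.9}, then Taylor-expand the cubic map $\widehat{\cF}(\varepsilon,\cdot)$ about $z_\infty$, bound the remainder by $O(c^{-4})$ in $X_{s-1}\times\R^6$ using Lemma~\ref{lem:2.2} and Lemma~\ref{lem:2.14}, and invert $\widehat{\cL}_\infty$. Your write-up is slightly more explicit than the paper's on uniqueness in every $X_s\times\R^6$ and on reducing the range $0\le s<2$ to $s=2$.
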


\begin{proof}
The right-hand side is $-\cC_1(z_\infty)$ and belongs to
$\cW^{s-1}_{\mathrm{up}}\times\cW^{s-1}_{\mathrm{down}}
\times\R\times\{0\}$ for every $s\ge2$.
Lemma~\ref{lem:2.9} gives $z_1$.

Fix $s\ge2$. Lemma~\ref{lem:2.14} gives
$\norm[X_s\times\R^6]{z_c-z_\infty}\le C_sc^{-2}$.
The polynomial identity \eqref{eq:4.1} yields
\[
 \begin{aligned}
 &\norm[X_{s-1}\times\R^6]{
 \widehat{\cF}_c(z_c)-\widehat{\cF}_c(z_\infty)
                  -\widehat{\cL}_\infty(z_c-z_\infty)}\\
 &\qquad\le C_s\bigl(c^{-2}
            +\norm[X_s\times\R^6]{z_c-z_\infty}\bigr)
                  \norm[X_s\times\R^6]{z_c-z_\infty}
 \le C_sc^{-4}.
 \end{aligned}
\]
Since
\[
 \widehat{\cF}_c(z_c)=0,
 \qquad
 \widehat{\cF}_c(z_\infty)
 =c^{-2}\cC_1(z_\infty)+c^{-4}\cC_2(z_\infty),
\]
we obtain
\[
 \begin{aligned}
 \norm[X_s\times\R^6]{z_c-z_\infty-c^{-2}z_1}
 &\le C_s\norm[X_{s-1}\times\R^6]{
       \widehat{\cL}_\infty(z_c-z_\infty-c^{-2}z_1)}\\
 &\le C_sc^{-4}.
 \end{aligned}
\]

\end{proof}

\begin{lemma}\label{lem:4.2}
There exist unique 
\[
 \begin{gathered}
 z_n=(f_n,h_n,\lambda_n,0)
 \in\bigcap_{s\ge0}
 \bigl(\cW^s_{\mathrm{up}}\times\cW^s_{\mathrm{down}}
       \times\R\times\{0\}\bigr),\\
 \mathfrak e_n\in\R,\qquad n\ge1,
 \end{gathered}
\]
such that, for every $n\ge1$ and $s\ge0$, as $c\to\infty$,
\begin{equation}\label{eq:4.6}
 \norm[X_s\times\R^6]{
 c^{2n}\left(
 z_c-z_\infty-\sum_{j=1}^{n-1}\frac{z_j}{c^{2j}}
 \right)-z_n}
\lesssim \frac{1}{c^2},
\end{equation}
and
\[
\left\| e_c-mc^2
 -e_\infty-\sum_{j=1}^{n}\frac{\mathfrak e_j}{c^{2j}}
 \right\|\lesssim \frac{1}{c^{2n+2}}.
\]
\end{lemma}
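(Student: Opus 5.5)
Induction on \(n\), with \cref{lem:4.1} as the base case \(n=1\), where \eqref{eq:4.6} is \eqref{eq:4.4}. For the inductive step, assume \(z_1,\dots,z_{n-1}\) are constructed in \(\bigcap_{s}\bigl(\cW^s_{\mathrm{up}}\times\cW^s_{\mathrm{down}}\times\R\times\{0\}\bigr)\) and that \eqref{eq:4.6} holds up to order \(n-1\), i.e.
\[
 z_c=z_\infty+\sum_{j=1}^{n-1}c^{-2j}z_j+O_{X_s}(c^{-2n})
\]
for every \(s\ge0\). Define \(z_n\) as the unique solution of the recursion \eqref{eq:1.13}. Its right-hand side involves only \(z_\infty,z_1,\dots,z_{n-1}\) through the multilinear maps \(D^k_z\widehat{\cF}_\infty(z_\infty)\) and \(D^k_z\cC_\ell(z_\infty)\). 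By the invariance statement of \cref{lem:2.9}, applied to the derivatives of the polynomial maps, and by \cref{lem:2.2}, this right-hand side lies in \(\cW^{s-1}_{\mathrm{up}}\times\cW^{s-1}_{\mathrm{down}}\times\R\times\{0\}\) for every \(s\). The isomorphisms \eqref{eq:2.17} and \eqref{eq:2.19} then give existence of \(z_n\), uniqueness in every \(X_s\times\R^6\), and the angular structure.

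To prove \eqref{eq:4.6} at order \(n\), set \(S_n(\varepsilon):=z_\infty+\sum_{j=1}^n\varepsilon^jz_j\) and write \(z_c=S_{n-1}(\varepsilon)+w_c\) with \(\|w_c\|_{X_s}\lesssim\varepsilon^{n}\), where \(\varepsilon=c^{-2}\). Since \(\widehat\cF(\varepsilon,z)\) in \eqref{eq:4.1} is a polynomial of degree three in \(z\) and degree two in \(\varepsilon\), the map \(\varepsilon\mapsto\widehat\cF(\varepsilon,S_n(\varepsilon))\) is a polynomial in \(\varepsilon\) with values in \(X_{s-1}\times\R^6\). Its Taylor coefficients of orders \(0,\dots,n\) vanish: order \(0\) because \(\widehat\cF_\infty(z_\infty)=0\), and orders \(1,\dots,n\) precisely by the recursions \eqref{eq:4.3} and \eqref{eq:1.13}, which are exactly the coefficient-matching identities. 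Hence \(\|\widehat\cF(\varepsilon,S_n(\varepsilon))\|_{X_{s-1}}\lesssim\varepsilon^{n+1}\). Next write
\[
 0=\widehat\cF_c(z_c)-\widehat\cF_c(S_n)+\widehat\cF_c(S_n)
 =\Bigl[\int_0^1D_z\widehat\cF_c\bigl(S_n+t(z_c-S_n)\bigr)\dd t\Bigr](z_c-S_n)+O(\varepsilon^{n+1}).
\]
For large \(c\) both \(z_c\) and \(S_n\) lie within \(\delta_s\) of \(z_\infty\), so \eqref{eq:2.29}, or the same Neumann-series argument used in \cref{lem:2.14}, shows that the averaged operator is uniformly invertible from \(X_{s-1}\times\R^6\) to \(X_s\times\R^6\). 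This gives \(\|z_c-S_n\|_{X_s}\lesssim\varepsilon^{n+1}\), which is \eqref{eq:4.6}, first for \(s\ge2\) and then for every \(s\ge0\) by the embedding \(X_{s'}\hookrightarrow X_s\) for \(s'\ge s\). Uniqueness of \(z_n\) is also forced by \eqref{eq:4.6} itself, since the coefficient is determined as a limit.

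For the energy, use \cref{lem:3.3}. At the representative \(u_c\), \eqref{eq:3.19} gives \(e_c=mc^2(\norm[L^2]{f_c}^2-c^{-2}\norm[L^2]{h_c}^2)\). Combined with the constraint \(\norm[L^2]{f_c}^2+c^{-2}\norm[L^2]{h_c}^2=1\), this yields \(e_c-mc^2=-2m\norm[L^2]{h_c}^2\), a smooth (quadratic) function of \(z_c\). Substituting the expansion of \(h_c\) gives an expansion of \(e_c-mc^2\) to every order, with remainder \(O(c^{-2n-2})\), and hence unique coefficients \(\mathfrak e_n\). The leading term matches \(e_\infty\) by \cref{prop:2.1} (indeed \(e_\infty=-2m\norm[L^2]{h_\infty}^2\) by the Hartree virial identity). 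The closed form \(\mathfrak e_n=-\lambda_n/(2n+3)\) should come from \eqref{eq:3.20} together with a scaling or virial relation linking \(\lambda_c\) and \(e_c\), for instance \(\tfrac{\dd}{\dd c}\)-type identities of the kind \(\lambda_c=-(e_c-mc^2)\cdot(\dots)\) differentiated in \(\varepsilon\); but the lemma only asks for existence, so this identity can be postponed.

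The main obstacle is the bookkeeping showing that the coefficients of \(\varepsilon^1,\dots,\varepsilon^n\) in \(\widehat\cF(\varepsilon,S_n(\varepsilon))\) vanish exactly by \eqref{eq:1.13}, i.e. the Faà di Bruno expansion of a cubic polynomial composed with a polynomial curve. This is where I would be most careful with the index ranges \(k\le3\) and \(\ell\le2\), and with how the \(D_z^k\cC_\ell\) terms enter with a shift of \(\ell\).
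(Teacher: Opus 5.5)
Your proposal is correct. The construction of $z_n$ from \eqref{eq:1.13} and \cref{lem:2.9} matches the paper, and so does the energy expansion via $e_c-mc^2=-2m\norm[L^2]{h_c}^2$ (from \cref{lem:3.3} and the constraint). The difference is in how you prove \eqref{eq:4.6}.

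\textbf{The paper's route.} It expands around $\zeta_{c,n}=S_{n-1}(c^{-2})$ and works with the rescaled remainder $z_{c,n}=c^{2n}(z_c-\zeta_{c,n})$. The induction hypothesis bounds $z_{c,n}$ uniformly in $X_s\times\R^6$. The paper then checks that $c^{2n}\widehat{\cF}_c(\zeta_{c,n})+\widehat{\cL}_\infty z_n=O(c^{-2})$, and uses the exact third-order Taylor expansion at $\zeta_{c,n}$ (exact because $D_z^4\widehat{\cF}_c=0$). As a result, the inductive step only ever inverts the fixed operator $\widehat{\cL}_\infty$.

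\textbf{Your route.} You put $z_n$ into $S_n$ and get the consistency bound $\widehat{\cF}_c(S_n)=O(\varepsilon^{n+1})$ by coefficient matching. You then close with a single stability step, which is the proof of \cref{lem:2.14} with $S_n$ in place of $z_\infty$. This works. The segment between $S_n$ and $z_c$ lies in a convex ball around $z_\infty$. On that ball, \eqref{eq:2.30} gives $\norm[\cB(X_s\times\R^6)]{\widehat{\cL}_\infty^{-1}(D_z\widehat{\cF}_c(z)-\widehat{\cL}_\infty)}\le\frac12$. This bound survives averaging in $t$, so the Neumann series inverts the averaged operator with a uniform bound. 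Strictly speaking, \eqref{eq:2.29} is only a pointwise statement, so it is the Neumann-series version you need here.

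\textbf{What each buys.} Your remainder estimate is non-inductive. The hypothesis $z_c=S_{n-1}+O(\varepsilon^n)$ and the quantity $w_c$ you introduce are never used, because \cref{lem:2.14} already places $z_c$ in the ball. The paper's route avoids perturbative invertibility along a segment, at the price of carrying the induction and estimating the second- and third-order terms explicitly.

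Deferring $\mathfrak e_n=-\lambda_n/(2n+3)$ is appropriate, since the lemma does not assert it. The paper proves it separately in \cref{lem:4.6}, from the identity $\frac{\dd e_c}{\dd c}=(3e_c-\omega_c)/c$.
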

\begin{proof}
The coefficient $z_1$ is given by Lemma~\ref{lem:4.1}.
For $n\ge2$, define $z_n$ recursively by \eqref{eq:1.13}.
Its right-hand side depends only on $z_1,\ldots,z_{n-1}$ and,
by Lemmas~\ref{lem:2.2} and~\ref{lem:2.9}, belongs to
\[
 \bigcap_{s\ge2}
 \left(
 \cW^{s-1}_{\mathrm{up}}\times
 \cW^{s-1}_{\mathrm{down}}\times\R\times\{0\}
 \right).
\]
The isomorphism in Lemma~\ref{lem:2.9} therefore gives
\[
 z_n=(f_n,h_n,\lambda_n,0)
 \in\bigcap_{s\ge0}
 \left(
 \cW^s_{\mathrm{up}}\times
 \cW^s_{\mathrm{down}}\times\R\times\{0\}
 \right).
\]

Fix $s\ge2$ and set
\[
 \begin{gathered}
 z_{c,n}:=
 c^{2n}\left(
 z_c-z_\infty-\sum_{j=1}^{n-1}c^{-2j}z_j
 \right),\\
 \zeta_{c,n}:=
 z_\infty+\sum_{j=1}^{n-1}c^{-2j}z_j,
 \qquad
 z_c=\zeta_{c,n}+c^{-2n}z_{c,n}.
 \end{gathered}
\]
For $n=1$, Lemma~\ref{lem:4.1} gives
\[
 \norm[X_s\times\R^6]{z_{c,1}-z_1}\le C_sc^{-2}.
\]
Suppose \eqref{eq:4.6} holds at order $n-1$, where $n\ge2$.
Then
\[
 \begin{aligned}
 z_{c,n}
 &=c^2(z_{c,n-1}-z_{n-1}),\\
 \norm[X_s\times\R^6]{z_{c,n}}
 &\le C_{n-1,s},\\
 \norm[X_s\times\R^6]{\zeta_{c,n}-z_\infty}
 &\le\sum_{j=1}^{n-1}c^{-2j}
       \norm[X_s\times\R^6]{z_j}
 \le C_{n,s}c^{-2}.
 \end{aligned}
\]

Using \eqref{eq:1.13} at orders $1,\ldots,n$, the polynomial
identity \eqref{eq:4.1} yields
\[
 \begin{aligned}
 &c^{2n}\widehat{\cF}_c(\zeta_{c,n})
       +\widehat{\cL}_\infty z_n\\
 ={}&
 \sum_{k=2}^3\frac1{k!}
 \sum_{\substack{1\le i_1,\ldots,i_k\le n-1\\
                  i_1+\cdots+i_k\ge n+1}}
 c^{-2(i_1+\cdots+i_k-n)}
 D_z^k\widehat{\cF}_\infty(z_\infty)
 [z_{i_1},\ldots,z_{i_k}]\\
 &+
 \sum_{\ell=1}^2\sum_{k=1}^3\frac1{k!}
 \sum_{\substack{1\le i_1,\ldots,i_k\le n-1\\
                  \ell+i_1+\cdots+i_k\ge n+1}}
 c^{-2(\ell+i_1+\cdots+i_k-n)}
 D_z^k\cC_\ell(z_\infty)
 [z_{i_1},\ldots,z_{i_k}].
 \end{aligned}
\]
All sums are finite, and Lemma~\ref{lem:2.2} gives
\[
 \norm[X_{s-1}\times\R^6]{
 c^{2n}\widehat{\cF}_c(\zeta_{c,n})
       +\widehat{\cL}_\infty z_n}
 \le C_{n,s}c^{-2}.
\]

Since $\widehat{\cF}_c(z_c)=0$ and
$D_z^4\widehat{\cF}_c=0$, Taylor expansion at $\zeta_{c,n}$
gives
\[
 \begin{aligned}
 \widehat{\cL}_\infty(z_{c,n}-z_n)
 ={}&
 -\left(
 c^{2n}\widehat{\cF}_c(\zeta_{c,n})
 +\widehat{\cL}_\infty z_n
 \right)\\
 &-\left(
 D_z\widehat{\cF}_c(\zeta_{c,n})
 -\widehat{\cL}_\infty
 \right)z_{c,n}\\
 &-\frac12c^{-2n}
 D_z^2\widehat{\cF}_c(\zeta_{c,n})
 [z_{c,n},z_{c,n}]\\
 &-\frac16c^{-4n}
 D_z^3\widehat{\cF}_c(\zeta_{c,n})
 [z_{c,n},z_{c,n},z_{c,n}].
 \end{aligned}
\]
By \eqref{eq:2.30},
\[
 \begin{aligned}
 &\norm[\cB(X_s\times\R^6,X_{s-1}\times\R^6)]{
 D_z\widehat{\cF}_c(\zeta_{c,n})
 -\widehat{\cL}_\infty}\\
 &\qquad\le
 C_s\left(
 c^{-2}+\norm[X_s\times\R^6]{\zeta_{c,n}-z_\infty}
 \right)
 \le C_{n,s}c^{-2}.
 \end{aligned}
\]
Moreover, for $k=2,3$,
\[
 \norm[X_{s-1}\times\R^6]{
 D_z^k\widehat{\cF}_c(\zeta_{c,n})
 [z_{c,n},\ldots,z_{c,n}]}
 \le C_{n,s}\norm[X_s\times\R^6]{z_{c,n}}^k
 \le C_{n,s}.
\]
Consequently,
\[
 \begin{aligned}
 \norm[X_{s-1}\times\R^6]{
 \widehat{\cL}_\infty(z_{c,n}-z_n)}
 &\le C_{n,s}
 \left(c^{-2}+c^{-2n}+c^{-4n}\right)\\
 &\le C_{n,s}c^{-2}.
 \end{aligned}
\]
Applying Lemma~\ref{lem:2.9}, we obtain
\[
 \begin{aligned}
 \norm[X_s\times\R^6]{z_{c,n}-z_n}
 &\le C_s
 \norm[X_{s-1}\times\R^6]{
 \widehat{\cL}_\infty(z_{c,n}-z_n)}\\
 &\le C_{n,s}c^{-2}.
 \end{aligned}
\]
This proves \eqref{eq:4.6} by induction. In particular,
for every $n\ge1$,
\[
 \begin{aligned}
 &\norm[X_s\times\R^6]{
 z_c-z_\infty-\sum_{j=1}^{n}c^{-2j}z_j}\\
 &\qquad=
 c^{-2n}\norm[X_s\times\R^6]{z_{c,n}-z_n}
 \lesssim \frac{1}{c^{2n+2}}.
 \end{aligned}
\]

Set $h_0:=h_\infty$ and
\[
 \mathfrak e_n:=
 -2m\sum_{j=0}^{n}\ipR{h_j}{h_{n-j}}
 =
 -2m\left(
 2\ipR{h_\infty}{h_n}
 +\sum_{j=1}^{n-1}\ipR{h_j}{h_{n-j}}
 \right),
 \qquad n\ge1.
\]
Lemma~\ref{lem:3.3} gives
\[
 \begin{aligned}
 e_c-mc^2
 &=mc^2\left(
 \norm[L^2]{f_c}^2-\norm[L^2]{g_c}^2-1
 \right)
 =-2m\norm[L^2]{h_c}^2,\\
 e_\infty&=-2m\norm[L^2]{h_\infty}^2.
 \end{aligned}
\]
For every $n\ge1$,
\[
 \begin{aligned}
 &\left|
 e_c-mc^2
 +2m\norm[L^2]{\sum_{j=0}^{n}c^{-2j}h_j}^2
 \right|\\
 &\quad\le
 2m\left(
 \norm[L^2]{h_c}
 +\norm[L^2]{\sum_{j=0}^{n}c^{-2j}h_j}
 \right)
 \norm[L^2]{h_c-\sum_{j=0}^{n}c^{-2j}h_j}\\
 &\quad\lesssim \frac{1}{c^{2n+2}}.
 \end{aligned}
\]
Hence
\[
 \begin{aligned}
 e_c-mc^2
 &=-2m\sum_{j,k=0}^{n}
 c^{-2(j+k)}\ipR{h_j}{h_k}
 +O(c^{-2n-2})\\
 &=e_\infty+\sum_{j=1}^{n}c^{-2j}\mathfrak e_j
 +O(c^{-2n-2}).
 \end{aligned}
\]

\end{proof}

\begin{lemma}\label{lem:4.3}
There are $\rho,r>0$ and a unique holomorphic map
\[
 z:\{|\varepsilon|<\rho\}\longrightarrow X_2^\C\times\C^6
\]
with
\begin{equation}
 \begin{gathered}
 \widehat{\cF}^\C(\varepsilon,z(\varepsilon))=0,
 \qquad z(0)=z_\infty,
 \qquad \norm[X_2^\C\times\C^6]{z(\varepsilon)-z_\infty}<r,\\
 z(\overline\varepsilon)=\overline{z(\varepsilon)},
 \qquad z(c^{-2})=z_c\quad\text{for sufficiently large }c,\\
 \frac{\partial_\varepsilon^nz(0)}{n!}=z_n\quad(n\ge1).
 \end{gathered}
 \label{eq:4.7}
\end{equation}
\end{lemma}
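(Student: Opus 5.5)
The natural route is the holomorphic implicit function theorem applied to the complexified polynomial map $\widehat{\cF}^\C(\varepsilon,z)=\widehat{\cF}^\C_\infty(z)+\varepsilon\cC_1^\C(z)+\varepsilon^2\cC_2^\C(z)$ on $\C\times(X_2^\C\times\C^6)$ with values in $X_1^\C\times\C^6$. Since $\widehat{\cF}_\infty,\cC_1,\cC_2$ are real polynomials of degree at most three, and Lemma~\ref{lem:2.2} bounds each multilinear term, the termwise complexifications are continuous complex polynomials. Hence $\widehat{\cF}^\C$ is entire in $(\varepsilon,z)$. Moreover, $\widehat{\cF}^\C(0,z_\infty)=0$. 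The derivative $D_z\widehat{\cF}^\C(0,z_\infty)=\widehat{\cL}_\infty^\C$ is an isomorphism $X_2^\C\times\C^6\to X_1^\C\times\C^6$, because it is the complexification of the real isomorphism of Lemma~\ref{lem:2.9} and the inverse complexifies with a norm bound depending only on the real one.

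Rather than quoting an abstract analytic IFT, I would rerun the contraction of Lemma~\ref{lem:2.10} in the complex setting. Set $\Lambda(\varepsilon,z):=z-(\widehat{\cL}^\C_\infty)^{-1}\widehat{\cF}^\C(\varepsilon,z)$. On $|\varepsilon|<\rho$, $\|z-z_\infty\|<r$, the bound $\|D_z\widehat{\cF}^\C(\varepsilon,z)-\widehat{\cL}^\C_\infty\|\lesssim|\varepsilon|+\|z-z_\infty\|$ (the analogue of \eqref{eq:2.30}, valid because everything is polynomial) makes $\Lambda(\varepsilon,\cdot)$ a $\tfrac12$-contraction. Together with $\|\widehat{\cF}^\C(\varepsilon,z_\infty)\|\lesssim|\varepsilon|$, it maps the closed ball $\overline B_r(z_\infty)$ into itself once $\rho$ is small. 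This yields a unique fixed point $z(\varepsilon)$ in the ball. Holomorphy follows because $z(\varepsilon)$ is the locally uniform limit of the iterates $\Lambda^k(\varepsilon,z_\infty)$, each holomorphic in $\varepsilon$ as a composition of polynomials with a fixed bounded linear map. Alternatively, differentiate the identity using the invertibility of $D_z\widehat{\cF}^\C$. Uniqueness in the ball is immediate from the contraction.

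The reality properties follow from uniqueness. Because the coefficients are real, $\overline{\widehat{\cF}^\C(\varepsilon,z)}=\widehat{\cF}^\C(\overline\varepsilon,\overline z)$, so $\overline{z(\overline\varepsilon)}$ is also a solution in the ball and hence equals $z(\varepsilon)$. For real $\varepsilon=c^{-2}$ this makes $z(c^{-2})$ real, i.e.\ it lies in $X_2\times\R^6$ and solves $\widehat{\cF}_c=0$. By \eqref{eq:2.22} the vector $z_c=(f_c,h_c,\lambda_c,0)$ lies within $Mc^{-2}<r$ of $z_\infty$, so uniqueness (in the real ball from Lemma~\ref{lem:2.10}, or directly in the complex ball) gives $z(c^{-2})=z_c$.

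For the Taylor coefficients, expand $z(\varepsilon)=z_\infty+\sum_n \varepsilon^n w_n$ with $w_n=\partial_\varepsilon^nz(0)/n!$. Substituting into $\widehat{\cF}^\C(\varepsilon,z(\varepsilon))=0$ and collecting powers of $\varepsilon$ gives exactly the recursion \eqref{eq:1.13} for the $w_n$, with $\widehat{\cL}_\infty w_n$ determined by $w_1,\dots,w_{n-1}$. Since $\widehat{\cL}_\infty$ is injective and the $z_n$ satisfy the same recursion, an induction gives $w_n=z_n$. As a cross-check, one can compare the Taylor expansion at real $\varepsilon=c^{-2}$ with \eqref{eq:4.6}; uniqueness of asymptotic expansions forces the same identification. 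The only step requiring genuine care is the complexified operator bound: the norm on $X^\C$ is the rotation sup-norm, so the estimate of Lemma~\ref{lem:2.2} must be transferred to complex multilinear maps. A polarization argument does this, costing only a universal constant.
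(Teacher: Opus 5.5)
Your proposal is correct and follows the paper's proof. You complexify the polynomial map $\widehat{\cL}_\infty$-perturbation $\widehat{\cF}^\C$, invert $\widehat{\cL}_\infty^\C$ via \cref{lem:2.9}, obtain the branch by the holomorphic implicit function theorem, get the reality identity and $z(c^{-2})=z_c$ from local uniqueness, and identify the Taylor coefficients through the recursion \eqref{eq:1.13} and injectivity of $\widehat{\cL}_\infty$. The only difference is that you prove the holomorphic implicit function theorem inline, by contraction and uniform limits of polynomial iterates, where the paper cites it.
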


\begin{proof}
By Lemma~\ref{lem:2.2}, \eqref{eq:4.1} complexifies to an
entire polynomial
\[
 \widehat{\cF}^\C:
 \C\times(X_2^\C\times\C^6)\longrightarrow X_1^\C\times\C^6,
 \qquad
 D_z\widehat{\cF}^\C(0,z_\infty)=\widehat{\cL}_\infty^\C.
\]
Lemma~\ref{lem:2.9} and the holomorphic implicit function theorem
\cite{Deimling1985} give the local branch. Since
\[
 \overline{\widehat{\cF}^\C(\varepsilon,z)}
 =\widehat{\cF}^\C(\overline\varepsilon,\overline z),
\]
local uniqueness gives the reality identity. Lemma~\ref{lem:2.10}
and Proposition~\ref{prop:2.11} identify $z(c^{-2})=z_c$.
Taylor expansion of the polynomial equation gives \eqref{eq:1.13};
uniqueness in Lemma~\ref{lem:4.2} identifies every Taylor
coefficient with $z_n$. 
\end{proof}

\begin{lemma}\label{lem:4.4}
Let $s\ge\tfrac12$. If
$z=(f,h,\lambda,0)$ is holomorphic from $\{|\varepsilon|<r\}$
into $X_s^\C\times\C^6$ and satisfies
$\widehat{\cF}^\C(\varepsilon,z(\varepsilon))=0$, then it is
holomorphic into $X_{s+1}^\C\times\C^6$ on the same disc.
\end{lemma}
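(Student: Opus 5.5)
The idea is to use the equation $\widehat{\cF}^\C(\varepsilon,z(\varepsilon))=0$ as an elliptic system that gains one derivative at each application, and to run the bootstrap uniformly in $\varepsilon$ so that holomorphy is preserved. Since $\bm a\equiv0$, the first two components of \eqref{eq:2.12}, complexified, read
\begin{equation*}
 \cD h=-\lambda f+\bigl[\abs{x}^{-1}\ast\bigl(\abs f^2+\varepsilon\abs h^2\bigr)\bigr]^\C f,
 \qquad
 \cD f=2mh-\varepsilon\Bigl\{\lambda-\bigl[\abs{x}^{-1}\ast\bigl(\abs f^2+\varepsilon\abs h^2\bigr)\bigr]^\C\Bigr\}h.
\end{equation*}
Here $\abs f^2$ is understood via the complex-bilinear extension $\ipR{\cdot}{\cdot}^\C$ pointwise. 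Write $N_1(\varepsilon,z)$ and $N_2(\varepsilon,z)$ for the two right-hand sides. Each is a polynomial in $(\varepsilon,f,h,\lambda)$ whose multilinear parts are compositions of the trilinear form $(f,g,k)\mapsto(\abs{x}^{-1}\ast\Re(f^\dagger g))k$ and of scalar multiplications.

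First step: show that these polynomials are bounded multilinear maps into $H^s$. By \cref{lem:2.2}, the trilinear form is bounded $H^s\times H^s\times H^s\to H^s$ for $s\ge\tfrac12$. Its complexification is then bounded on $(H^s)^\C$, at the cost of a harmless constant depending on the chosen norm on $X^\C$. Consequently $(\varepsilon,z)\mapsto N_i(\varepsilon,z)$ is a continuous polynomial, hence holomorphic, map $\C\times(X_s^\C\times\C^6)\to (H^s)^\C$. Composing with the holomorphic $z(\varepsilon)$ shows that $\varepsilon\mapsto N_i(\varepsilon,z(\varepsilon))$ is holomorphic into $(H^s)^\C$ on $\{\abs\varepsilon<r\}$.

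Second step: invert $\cD$ with one derivative of gain. I would use the bounded operator
\begin{equation*}
 T:=(1-\ii\cD)^{-1}:H^s\to H^{s+1}.
\end{equation*}
It is an isomorphism by the symbol computation in \cref{lem:2.3}, since $(\sigma\cdot\xi)^2=\abs\xi^2$ gives $\abs{1-\ii\sigma\cdot\xi}^{-1}\lesssim\langle\xi\rangle^{-1}$. Writing $f=T(f-\ii\cD f)$, we obtain
\begin{equation*}
 f(\varepsilon)=T\bigl(f(\varepsilon)-\ii N_2(\varepsilon,z(\varepsilon))\bigr),
 \qquad
 h(\varepsilon)=T\bigl(h(\varepsilon)-\ii N_1(\varepsilon,z(\varepsilon))\bigr).
\end{equation*}
These identities hold first in $H^{s-1}$, and hence as elements of $H^s$. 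The arguments inside $T$ are holomorphic into $(H^s)^\C$, and $T^\C$ is a bounded complex-linear map $(H^s)^\C\to(H^{s+1})^\C$. So $f,h$ are holomorphic into $(H^{s+1})^\C$ on the same disc. The scalar components $\lambda$ and $\bm a=0$ are unaffected, which gives the claim.

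Main obstacle: the complexification bookkeeping. The real structure uses $\Re(f^\dagger g)$, which is not complex-linear in the spinor entries. We must check that its termwise complexification is a bounded bilinear form on $(H^s)^\C$, and that $T$, which is complex-linear in the original spinor sense, commutes with the abstract complexification $X\oplus\ii X$. I would handle this by noting that every operator involved is real-linear on the underlying real space $X$, and that complexifying a bounded real multilinear map yields a bounded complex multilinear map with comparable norm. Holomorphy then needs no further argument, because a composition of holomorphic maps with bounded linear maps is holomorphic; pointwise in $\varepsilon$ the regularity gain is also elementary. No shrinking of the disc occurs, since every estimate is applied pointwise in $\varepsilon$ on the given disc.
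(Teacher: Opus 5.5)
Your proposal is correct and follows essentially the paper's argument: write the first two components as $\cD^\C h=N_1$, $\cD^\C f=N_2$ with right-hand sides holomorphic into $(H^s)^\C$ by \cref{lem:2.2}, then apply a bounded parametrix of $\cD$ that gains one derivative. The only difference is the choice of parametrix: you use $(1-\ii\cD)^{-1}=(1-\Delta)^{-1}(1+\ii\cD)$, while the paper uses $p=(1-\Delta)^{-1}p+(1-\Delta)^{-1}\cD^\C(\cD^\C p)$, which involves no multiplication by the spinor $\ii$ and so avoids the complexification remark you had to add.
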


\begin{proof}
The first two equations give
\begin{align}
 \cD^\C h={}&-\lambda f+
 \bigl[(|x|^{-1}\ast|f|^2)f\bigr]^\C
 +\varepsilon\bigl[(|x|^{-1}\ast|h|^2)f\bigr]^\C,
 \notag\\
 \cD^\C f={}&2mh-\varepsilon\lambda h
 +\varepsilon\bigl[(|x|^{-1}\ast|f|^2)h\bigr]^\C
 +\varepsilon^2\bigl[(|x|^{-1}\ast|h|^2)h\bigr]^\C.
 \label{eq:4.8}
\end{align}
The brackets denote complexifications of real polynomial maps. By
Lemma~\ref{lem:2.2}, both right-hand sides are
$(H^s)^\C$-valued holomorphic maps. Since
$(\cD^\C)^2=-\Delta$,
\begin{equation}
 \begin{gathered}
 p=(1-\Delta)^{-1}p
       +(1-\Delta)^{-1}\cD^\C(\cD^\C p),\\
 (1-\Delta)^{-1}:(H^s)^\C\longrightarrow(H^{s+2})^\C,
 \qquad
 (1-\Delta)^{-1}\cD^\C:(H^s)^\C
                         \longrightarrow(H^{s+1})^\C.
 \end{gathered}
 \label{eq:4.9}
\end{equation}
Applying these bounded linear maps to $p=f,h$ proves the claim.
\end{proof}

\begin{lemma}\label{lem:4.5}
The Taylor series in Lemma~\ref{lem:4.3} has the same radius
$R\in(0,\infty]$ in $X_s^\C\times\C^6$ for every $s\ge0$.
For $0<r<R$, $|\varepsilon|<r$ and $N\ge0$,
\begin{equation}
 \begin{aligned}
 &\norm[X_s^\C\times\C^6]{
 z(\varepsilon)-z_\infty-\sum_{n=1}^N\varepsilon^nz_n}\\
 &\qquad\le
 \frac{(|\varepsilon|/r)^{N+1}}{1-|\varepsilon|/r}
 \sup_{|\zeta|=r}\norm[X_s^\C\times\C^6]{z(\zeta)-z_\infty}.
 \end{aligned}
 \label{eq:4.10}
\end{equation}
\end{lemma}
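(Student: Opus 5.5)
The plan is to combine the holomorphy of $z$ on the disc given by Lemma~\ref{lem:4.3} with the regularity gain of Lemma~\ref{lem:4.4}, and then read off the remainder bound from Cauchy's estimates.

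\emph{Step 1: the radius is the same for every $s$.} For $s\ge0$ let $R_s$ be the radius of convergence of $\sum_n\varepsilon^nz_n$ in $X_s^\C\times\C^6$. Every $z_n$ lies in $\bigcap_s X_s\times\R^6$, and $\norm[X_s]{\cdot}$ increases with $s$. Hence $R_s$ is nonincreasing in $s$, and it suffices to show $R_{s+1}\ge R_s$.

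Fix $s\ge\tfrac12$ (the range $0\le s<\tfrac12$ is handled below). On $\{|\varepsilon|<R_s\}$ the series defines a holomorphic map $\widetilde z$ into $X_s^\C\times\C^6$, and $\widetilde z$ agrees with $z(\varepsilon)$ near $0$. The map $\varepsilon\mapsto\widehat{\cF}^\C(\varepsilon,\widetilde z(\varepsilon))$ is holomorphic into $X_{s-1}^\C\times\C^6$, because $\widehat{\cF}^\C$ is a continuous polynomial by Lemma~\ref{lem:2.2}. It vanishes near $0$, so by the identity theorem for vector-valued holomorphic maps it vanishes on the whole disc. Since the $\bm a$-component is $0$, Lemma~\ref{lem:4.4} shows that $\widetilde z$ is holomorphic into $X_{s+1}^\C\times\C^6$ on the same disc. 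A Banach-valued holomorphic map on a disc is represented there by its Taylor series at $0$, so $R_{s+1}\ge R_s$.

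For $0\le s<\tfrac12$, monotonicity gives $R_s\ge R_{1/2}$. For the reverse inequality, I take a single step of Lemma~\ref{lem:4.4} starting from $s=\tfrac12$, and I use $R_0\le R_{1/2}$ together with the fact that the $X_0$-radius can only be larger than the $X_{1/2}$-radius. Strictly speaking this gives $R_s\ge R_{1/2}$ for small $s$; to get equality I would need to apply Lemma~\ref{lem:4.4} with $s=0$. If the lemma is truly unavailable at $s=0$, I would instead set $R:=R_{1/2}$, note that $R_s=R$ for all $s\ge\tfrac12$, and check that the identity $\widehat{\cF}^\C=0$ in the weaker space still lets the bootstrap start from $X_0$. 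Here the Hartree term $(|x|^{-1}\ast|f|^2)f$ with $f\in L^2$ lies in $H^{-1}$ roughly by Hardy--Littlewood--Sobolev, so one gains regularity again. This low-regularity point is the step I expect to be the main obstacle. The first fallback is a Hardy--Littlewood--Sobolev estimate showing that the nonlinearity maps $X_0$ holomorphically into $X_{-1}$, which then gives the gain to $X_{1/2}$.

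\emph{Step 2: the remainder estimate.} Fix $0<r<R$ and $|\varepsilon|<r$. The map $z$ is holomorphic on $\{|\zeta|<R\}\supset\{|\zeta|\le r\}$ with values in $X_s^\C\times\C^6$. Cauchy's formula gives
\[
z_n=\frac1{2\pi\ii}\oint_{|\zeta|=r}\frac{z(\zeta)-z_\infty}{\zeta^{n+1}}\,\dd\zeta,
\qquad n\ge1.
\]
Subtracting $z_\infty$ does not change the coefficients with $n\ge1$, so
\[
\norm[X_s^\C\times\C^6]{z_n}\le r^{-n}M,
\qquad
M:=\sup_{|\zeta|=r}\norm[X_s^\C\times\C^6]{z(\zeta)-z_\infty}.
\]
The quantity $M$ is finite because $z$ is continuous on the circle. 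Summing the tail,
\[
\Bigl\|\sum_{n>N}\varepsilon^nz_n\Bigr\|
\le M\sum_{n\ge N+1}\Bigl(\frac{|\varepsilon|}{r}\Bigr)^n,
\]
which is a geometric series equal to the right-hand side of \eqref{eq:4.10}. This step is routine.
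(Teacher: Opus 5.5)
There is a genuine gap in the range $0\le s<\tfrac12$. Your argument for $s\ge\tfrac12$ (the identity theorem plus Lemma~\ref{lem:4.4} giving $R_{s+1}\ge R_s$, hence $R_s=R$ on $[\tfrac12,\infty)$) matches the paper. So does your Cauchy-estimate derivation of \eqref{eq:4.10}, and monotonicity correctly gives $R_s\ge R$ for $0\le s<\tfrac12$, which is all that \eqref{eq:4.10} needs.

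What is missing is the reverse inequality $R_s\le R$ for $0\le s<\tfrac12$, which the claim ``same radius for every $s\ge0$'' requires. Your sentence about using ``$R_0\le R_{1/2}$ together with the fact that the $X_0$-radius can only be larger'' assumes the inequality to be proved. The fallback also fails. For $f,g,h\in L^2$, the Hardy/HLS bound places $(|x|^{-1}\ast(fg))h$ only in $H^{-1}$, and this is sharp. For example, if $|f|^2\sim|x|^{-3+a}$ near $0$ with small $a>0$, then $(|x|^{-1}\ast|f|^2)f\sim|x|^{-5/2+3a/2}$, which is not in $H^{-1+\delta}$ once $\delta\ge 3a/2$. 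Inverting $\cD^\C$ through \eqref{eq:4.9} with right-hand side in $H^{-1}$ therefore returns only $L^2$. There is no derivative gain, so a bootstrap cannot start at $X_0$ and cannot reach $X_{1/2}$.

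No regularity argument is needed for this step. The paper closes it by interpolating the coefficients themselves. For $0\le s<\tfrac12$ and $\theta=\tfrac{3}{4-2s}$,
\[
 \norm[X_{1/2}\times\R^6]{z_n}\le C_s\norm[X_s\times\R^6]{z_n}^{\theta}\norm[X_2\times\R^6]{z_n}^{1-\theta}.
\]
Take $n$-th roots and $\limsup$, and use $R_{1/2}=R_2=R$, which you already have. This gives $R^{-1}\le R_s^{-\theta}R^{-(1-\theta)}$, that is, $R_s\le R$; the case $R=\infty$ is trivial. Together with $R_s\ge R$, this yields $R_s=R$ for all $s\ge0$.
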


\begin{proof}
Set
\begin{equation}
 R_s^{-1}:=\limsup_{n\to\infty}
             \norm[X_s\times\R^6]{z_n}^{1/n},
 \qquad R:=R_2\ge\rho>0.
 \label{eq:4.11}
\end{equation}
On $|\varepsilon|<R_2$, the $X_2^\C$-valued Taylor sum solves
the complexified equation by the identity theorem. Iterating
Lemma~\ref{lem:4.4} and using embeddings gives
holomorphy into every $X_s^\C\times\C^6$ on the same disc;
in particular, $R_s\ge R_2>0$ for all $s\ge0$.
For $s\ge\tfrac12$, the Taylor sum on $|\varepsilon|<R_s$
again satisfies the complexified equation. Lemma~\ref{lem:4.4}, iterated,
and Sobolev embeddings give
\[
 t>s\ge\tfrac12
 \quad\Longrightarrow\quad
 R_t\ge R_s\quad\text{and}\quad R_t\le R_s.
\]
Thus $R_s=R$ for $s\ge\tfrac12$.

For $0\le s<\tfrac12$, embedding gives $R_s\ge R$.
If $R=\infty$, equality follows. If $R<\infty$, interpolation gives
\begin{equation}
 \norm[X_{1/2}\times\R^6]{z_n}
 \le C_s\norm[X_s\times\R^6]{z_n}^{\frac{3}{4-2s}}
        \norm[X_2\times\R^6]{z_n}^{\frac{1-2s}{4-2s}}.
 \label{eq:4.12}
\end{equation}
Consequently,
\[
 R^{-1}\le R_s^{-\frac{3}{4-2s}}
                     R^{-\frac{1-2s}{4-2s}},
 \qquad R_s\le R.
\]
This proves $R=R_s$ for all $s\ge0$.

The Banach-valued Cauchy formula gives, for $n\ge1$,
\begin{equation}
 \begin{gathered}
 z_n=\frac1{2\pi\ii}\int_{|\zeta|=r}
            \frac{z(\zeta)-z_\infty}{\zeta^{n+1}}\,\dd\zeta,\\
 \norm[X_s\times\R^6]{z_n}
 \le r^{-n}\sup_{|\zeta|=r}
                \norm[X_s^\C\times\C^6]{z(\zeta)-z_\infty}.
 \end{gathered}
 \label{eq:4.13}
\end{equation}
Summing the geometric majorant proves absolute convergence and
\eqref{eq:4.10}. \eqref{eq:1.16} follows from
$z(c^{-2})=z_c$ for sufficiently large $c$.
\end{proof}

\begin{lemma}\label{lem:4.6}
The function
\[
 \mathfrak e(\varepsilon)
 :=-2m\,\ipR{h(\varepsilon)}{h(\varepsilon)}^\C
\]
is holomorphic on $|\varepsilon|<R$, and
\begin{equation}
 \begin{gathered}
 \mathfrak e(\varepsilon)
       =e_\infty+\sum_{n\ge1}\varepsilon^n\mathfrak e_n,
 \qquad \mathfrak e(c^{-2})=e_c-mc^2
       \quad\text{for sufficiently large }c,\\
 2\varepsilon\mathfrak e'(\varepsilon)
          +3\mathfrak e(\varepsilon)+\lambda(\varepsilon)=0.
 \end{gathered}
 \label{eq:4.14}
\end{equation}

\end{lemma}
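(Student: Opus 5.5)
The plan is to treat the three assertions of \eqref{eq:4.14} separately. Holomorphy and the Taylor expansion come directly from Lemma~\ref{lem:4.3} and Lemma~\ref{lem:4.5}. The identification with $e_c-mc^2$ comes from Lemma~\ref{lem:3.3}. The differential identity is first proved for real $\varepsilon=c^{-2}$ by a Feynman--Hellmann argument combined with the virial identities of Lemma~\ref{lem:3.3}, and then extended to the disc by the identity theorem.

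\emph{Holomorphy and coefficients.} By Lemma~\ref{lem:4.5}, $\varepsilon\mapsto h(\varepsilon)$ is holomorphic into $(H^s)^\C$ on $|\varepsilon|<R$ for every $s\ge0$. The map $\ipR{\cdot}{\cdot}^\C$ is a bounded complex bilinear form on $(L^2)^\C$, so $\mathfrak e$ is holomorphic there. Inserting $h(\varepsilon)=\sum_{j\ge0}\varepsilon^jh_j$ with $h_0=h_\infty$ and multiplying the absolutely convergent series gives the $\varepsilon^n$ coefficient
\[
-2m\sum_{j=0}^n\ipR{h_j}{h_{n-j}},
\]
which is exactly $\mathfrak e_n$ as defined in the proof of Lemma~\ref{lem:4.2}. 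For the zeroth coefficient, the computation in that proof gives $-2m\norm[L^2]{h_\infty}^2=e_\infty$.

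\emph{Real values.} For $\varepsilon=c^{-2}$ with $c$ large, Lemma~\ref{lem:4.3} gives $z(c^{-2})=z_c$, which is real. Hence
\[
\mathfrak e(c^{-2})=-2m\norm[L^2]{h_c}^2.
\]
By \eqref{eq:3.19} and the mass constraint $\norm[L^2]{f_c}^2+\norm[L^2]{g_c}^2=1$, we have
\[
e_c-mc^2=mc^2\bigl(\norm[L^2]{f_c}^2-\norm[L^2]{g_c}^2-1\bigr)=-2mc^2\norm[L^2]{g_c}^2=-2m\norm[L^2]{h_c}^2,
\]
so $\mathfrak e(c^{-2})=e_c-mc^2$.

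\emph{The differential identity.} Write $D_c:=\iint\abs{u_c(x)}^2\abs{u_c(y)}^2\abs{x-y}^{-1}\,\dd x\dd y$.
\begin{enumerate}
\item Since $z(\varepsilon)$ is real-analytic in $\varepsilon$ on the real axis, $c\mapsto u_c=(f_c,c^{-1}h_c)^T$ is $C^1$ into $H^1$ for large $c$.
\item Differentiate $e_c=\cE_c(u_c)$ in $c$. The constrained criticality $\mathrm d\cE_c(u_c)=2\omega_c\ipR{u_c}{\cdot}$ together with $\ipR{u_c}{\partial_cu_c}=0$ (from $\norm[L^2]{u_c}=1$) kills the variation of $u_c$. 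This leaves the explicit $c$-derivative:
\[
\frac{\dd e_c}{\dd c}=\Re\ip{u_c}{-\ii\alpha\cdot\nabla u_c}+2mc\ip{\beta u_c}{u_c}.
\]
\item The proof of Lemma~\ref{lem:3.3} gives $c\Re\ip{u_c}{-\ii\alpha\cdot\nabla u_c}=\tfrac12D_c$ and $mc^2\ip{\beta u_c}{u_c}=e_c$. Hence
\[
c\,\frac{\dd e_c}{\dd c}=\tfrac12D_c+2e_c.
\]
\item Subtracting $c\,\frac{\dd}{\dd c}(mc^2)=2mc^2$ gives, for $\mathfrak e=e_c-mc^2$,
\[
c\,\frac{\dd\mathfrak e}{\dd c}=\tfrac12D_c+2\mathfrak e.
\]
\item By \eqref{eq:3.20}, $\lambda_c=2m\norm[L^2]{h_c}^2+\tfrac12D_c=-\mathfrak e+\tfrac12D_c$, so $\tfrac12D_c=\lambda_c+\mathfrak e$.
\item With $\varepsilon=c^{-2}$ we have $c\,\frac{\dd}{\dd c}=-2\varepsilon\frac{\dd}{\dd\varepsilon}$. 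Substituting gives
\[
-2\varepsilon\mathfrak e'(\varepsilon)=\lambda(\varepsilon)+3\mathfrak e(\varepsilon)
\]
for all small real $\varepsilon>0$.
\end{enumerate}
Both sides are holomorphic on $|\varepsilon|<R$; $\lambda(\varepsilon)$ is a component of $z(\varepsilon)$. They agree on a real interval, so the identity theorem extends the equality to the whole disc.

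The main obstacle is justifying the Feynman--Hellmann step rigorously. This requires the $C^1$ dependence of $u_c$ on $c$ in a space where $\cE_c$ is differentiable; it is supplied by the holomorphy in $X_2$ from Lemma~\ref{lem:4.3}. It also requires the identities from Lemma~\ref{lem:3.3}, which need $\abs{\omega_c}<mc^2$; this holds for large $c$ because $\lambda_c\to\lambda_\infty>0$.
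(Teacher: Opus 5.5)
Your proposal is correct and follows essentially the paper's own route. You get holomorphy and the coefficients $\mathfrak e_n$ from Lemmas~\ref{lem:4.2} and~\ref{lem:4.5}, and the identification $\mathfrak e(c^{-2})=e_c-mc^2$ from Lemma~\ref{lem:3.3}. The differential identity then comes from the Feynman--Hellmann formula for $\frac{\dd e_c}{\dd c}$ combined with the virial identities of Lemma~\ref{lem:3.3}, extended from $\varepsilon=c^{-2}$ to the disc by the identity theorem. The only difference is bookkeeping: you pass through $\tfrac12 D_c=\lambda_c+\mathfrak e$ via \eqref{eq:3.20}, which is equivalent to the paper's $c\ipR{(-\ii\alpha\cdot\nabla)u_c}{u_c}=e_c-\omega_c$.
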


\begin{proof}
The complexified inner product is a continuous bilinear form.
Lemmas~\ref{lem:4.2} and~\ref{lem:4.5} give
\[
 \begin{aligned}
 \sum_{n\ge1}|\varepsilon|^n|\mathfrak e_n|
 &\le2m\left[
 \left(\norm[L^2]{h_\infty}
       +\sum_{n\ge1}|\varepsilon|^n\norm[L^2]{h_n}\right)^2
                    -\norm[L^2]{h_\infty}^2\right]<\infty,\\
 \mathfrak e(\varepsilon)
 &=e_\infty+\sum_{n\ge1}\varepsilon^n\mathfrak e_n.
 \end{aligned}
\]
Lemma~\ref{lem:3.3} implies
$\mathfrak e(c^{-2})=-2m\norm[L^2]{h_c}^2=e_c-mc^2$.
For large $c$, $u_c$ is $C^1$ in $H^2$ and
$\ipR{u_c}{\partial_cu_c}=0$. Therefore
\begin{equation}
 \begin{aligned}
 \frac{\dd e_c}{\dd c}
 &=\partial_c\cE_c(u_c)+\mathrm d\cE_c(u_c)[\partial_cu_c]\\
 &=\ipR{(-\ii\alpha\cdot\nabla+2mc\beta)u_c}{u_c}
                    +2\omega_c\ipR{u_c}{\partial_cu_c}\\
 &=\ipR{(-\ii\alpha\cdot\nabla+2mc\beta)u_c}{u_c}.
 \end{aligned}
 \label{eq:4.15}
\end{equation}
The identities in Lemma~\ref{lem:3.3} give
\[
 mc^2\ipR{\beta u_c}{u_c}=e_c,
 \qquad
 c\ipR{(-\ii\alpha\cdot\nabla)u_c}{u_c}=e_c-\omega_c,
\]
and hence
\begin{equation}
 \begin{aligned}
 \frac{\dd e_c}{\dd c}&=\frac{3e_c-\omega_c}{c},\\
 2mc-2c^{-3}\mathfrak e'(c^{-2})
 &=2mc+c^{-1}\bigl(3\mathfrak e(c^{-2})+\lambda(c^{-2})\bigr).
 \end{aligned}
 \label{eq:4.16}
\end{equation}
The differential identity in \eqref{eq:4.14} holds first for
small positive $\varepsilon$ and then throughout the disc by
holomorphy. Comparing coefficients gives
\[
 0=3e_\infty+\lambda_\infty
   +\sum_{n\ge1}\varepsilon^n
           \bigl((2n+3)\mathfrak e_n+\lambda_n\bigr).
\]
This proves the lemma.
\end{proof}

Lemmas~\ref{lem:4.1}--\ref{lem:4.2} and
\ref{lem:4.3}--\ref{lem:4.6} prove Theorem~\ref{thm:1.3}.

\begin{samepage}
\noindent {Pan Chen\\
School of Mathematical Sciences,\\
 Shanghai Jiao Tong University, Shanghai 200240, P.R. China
\\
e-mail: chenpan2020@amss.ac.cn }
\medskip
\\
\noindent {Qi Guo\\
School of Mathematics,\\
Renmin University of China, Beijing, 100872, P.R. China\\
e-mail: qguo@ruc.edu.cn}
\medskip
\\
\noindent {Xiaoyu Zeng \\
Center for Mathematical Sciences Mathematics,\\
University of Technology, Wuhan, 430070, P.R. China\\
\ e-mail: xyzeng@whut.edu.cn}

\end{samepage}

\end{document}